\newtheorem{theorem}{Theorem}[section]
\newtheorem{proposition}[theorem]{Proposition}
\newtheorem{lemma}[theorem]{Lemma}
\newtheorem{corollary}[theorem]{Corollary}
\theoremstyle{definition}
\newtheorem{definition}[theorem]{Definition}
\newtheorem{remark} [theorem] {Remark}
\newtheorem{problem} [theorem] {Problem}
\begin{document}

\title{ Spectrum of Weighted Composition Operators \\
Part III \\
Essential spectra of some disjointness preserving operators
on Banach lattices.}

\author{A. K. Kitover}

\address{Community College of Philadelphia, 1700 Spring Garden St., Philadelphia, PA, USA}

\email{akitover@ccp.edu}

\subjclass[2010]{Primary 47B33; Secondary 47B48, 46B60}

\date{\today}

\keywords{Disjointness preserving operators, spectrum, Fredholm spectrum, essential spectra}

\maketitle

\markboth{A.K.Kitover}{Spectrum of weighted composition operators. III}

\bigskip

\begin{abstract} We describe essential (in particular Fredholm and semi-Fredholm) spectra of operators on Banach lattices of the form
$T=wU$, where $w$ is a central operator and $U$ is a disjointness preserving operator such that its spectrum $\sigma(U)$ is a subset of the unit circle.
\end{abstract}

\section{Introduction}

We recall the following well known definition (see e.g.~\cite[Ch. IV, Sec. 5, page 230]{Ka}).

\begin{definition} \label{d1} A bounded linear operator $T$ on a Banach space $X$ is called \textit{semi-Fredholm} if $TX$ is closed in $X$ and either
$nul(T) = \dim \ker (T) < \infty$ or ${def}(T) = \dim (X/TX) < \infty$.

The operator $T$ is called \textit{Fredholm} if $nul(T) + def(T) < \infty$

\end{definition}

In correspondence with Definition~\ref{d1} the semi-Fredholm and Fredholm spectra of a bounded linear operator $T$ are defined as follows. ($\sigma(T)$ as usual means the spectrum of a bounded linear operator $T$ on a Banach space $X$)

\begin{definition} \label{d2} Let $T$ be a bounded linear operator on a Banach space $X$. The semi-Fredholm spectrum of $T$ is
$$\sigma_{sf}(T) = \{\lambda \in \sigma(T) : \; \mathrm{the} \; \mathrm{operator} \; \lambda I - T \, \mathrm{is \, not\; semi-Fredholm} \}.$$
The Fredholm spectrum of $T$ is
$$\sigma_{f}(T) = \{\lambda \in \sigma(T) : \; \mathrm{the} \; \mathrm{operator} \; \lambda I - T \, \mathrm{is \, not\; Fredholm} \}.$$
\end{definition}

It is well known (see~\cite{At}) that the Fredholm spectrum of a bounded linear operator $T$ coincides with its spectrum in the Calkin algebra.

There are many different definitions of \textbf{essential spectrum} of a linear operator on a Banach space of which Fredholm and semi-Fredholm spectra are but two (though important) examples. We will follow the book~\cite{EE} where five of them are discussed in details.

\begin{definition} \label{d3} (See~\cite[Section I.4, page 40] {EE} Let $T$ be a bounded linear operator on a Banach space $X$. We define the essential spectra of $T$  as the following subsets of $\sigma(T)$.
\begin{itemize}
  \item $\sigma_1(T) = \sigma(T) \setminus \{\zeta \in \mathds{C} : $ the operator $\zeta I - T$ is semi-Fredholm $\}$.
  \item $\sigma_2(T) = \sigma(T) \setminus \{\zeta \in \mathds{C} : $ the operator $\zeta I - T$ is semi-Fredholm and  $nul(\zeta I - T) < \infty \}$.
  \item $\sigma_3(T) = \sigma(T) \setminus \{\zeta \in \mathds{C} : $ the operator $\zeta I - T$ is Fredholm $\}$.
  \item $\sigma_4(T) = \sigma(T) \setminus \{\zeta \in \mathds{C} : $ the operator $\zeta I - T$ is Fredholm and $ind(\zeta I - T) = 0\}$.
  \item $\sigma_5(T) = \sigma(T) \setminus \{\zeta \in \mathds{C} : $ there is a component $C$ of the set $\mathds{C} \setminus \sigma_1(T)$ such that $\zeta \in C$ and the intersection of $C$ with the resolvent set of $T$ is not empty $\}$.
\end{itemize}

\end{definition}

It is well known (see eg.~\cite{EE} or~\cite{Ka}) that the sets $\sigma_i(T), i \in [1, \ldots , 5]$ are nonempty closed subsets of $\sigma(T)$ and that $\sigma_i(T) \subseteq \sigma_j(T), 1 \leq i < j \leq 5$, where all the inclusions can be proper. Nevertheless all the spectral radii
$r_i(T), i =1, \ldots ,5 $ are equal to the same number (see~\cite[Theorem I.4.10]{EE}) which is called the essential spectral radius of $T$.
It is also known (see~\cite{EE}) that the spectra $\sigma_i(T), i=1, \ldots , 4$ are invariant under compact perturbations but $\sigma_5(T)$ in general is not.

The contents of the paper are as follows.

\noindent In Section 2 we describe the semi-Fredholm and Fredholm spectra of weighted composition operators
 $$Tf=w(f \circ \varphi), f \in C(K),\eqno{(1)}$$
 where $C(K)$ is the space of all complex-valued continuous functions on a Hausdorff compact space $K$, $w \in C(K)$, and $\varphi$ is a homeomorphism of $K$ onto itself.

 \noindent In Section 3 we, based on the results of Section 1, describe the spectra $\sigma_i(T), i = 1, \ldots , 5$ where $T$ is an operator of form $(1)$.

 \noindent In Section 4 we consider operators of the form $T = wU: X \to X$ where $X$ is an arbitrary Banach lattice, $w$ is a central operator on $X$, $U$ is a $d$-isomorphism of $X$, and $\sigma(U)$ is a subset of the unit circle. We show that the study of essential spectra of such an operator can be reduced to the study of essential spectra of an appropriate operator of form $(1)$ on $C(K)$ where $K$ is the Stonean compact
 of the Dedekind completion of $X$.

 \noindent In section 5 we touch upon a much more difficult problem of describing essential spectra of weighted compositions induced by \textbf{non invertible} maps and in a special case when $\varphi$ is an open surjection on a compact $K$ provide a criterion for $def(\lambda I - wT_\varphi) = 0$.

 \noindent Finally, in the small appendix we prvide some clarifications about the statement and the proof of Theorem 22 from~\cite{Ki1} which is extensively used in the current paper.

 \section{ When is the operator  $\lambda I - T$  semi-Fredholm?}

We start with recalling some results from~\cite{Ki}

Let $T$ be as in $(1)$. In~\cite[Theorem 3.29 and Theorem 3.31]{Ki} we described two special cases when
$\lambda \in \sigma(T)$ and either $def(\lambda I - T) =0$ or $def(\lambda I - T^\prime) =0$ where $T^\prime$ is the Banach conjugate of $T$. Because these results are crucial for our description of $\sigma_{sf}(T)$ we will reproduce them here, but first we need to recall a couple of notations from~\cite{Ki}.

Let $X$ be a Banach space and $S$ be a bounded linear operator on $X$. We denote the spectrum of $S$ by $\sigma(S)$ and consider the partition of $\sigma(S)$ into two subsets.
$$\sigma_{ap}(S) = \{\lambda \in \sigma(S) : \exists x_n \in X, \; \|x_n\|=1, \; Sx_n - \lambda x_n \mathop \rightarrow \limits_{n \to \infty} 0, \}$$
$$\sigma_r(S) = \sigma(S) \setminus \sigma_{ap}(S).$$

Let $\varphi$ be a homeomorphism of a compact Hausdorff space $K$ onto itself. Then $\varphi^{(0)}$ will mean the identical map of $K$ onto itself, $\varphi^{(m)} = \varphi \circ \varphi^{(m-1)}, m \in \mathds{N}$, and
$\varphi^{(-m)} = (\varphi^{(m)})^{-1}, m \in \mathds{N}$.

Let $m \in \mathds{N}$. We will denote by $\Pi_m$ the subset of $K$ that consists of all $\varphi$-periodic points of period less or equal to $m$.

Let $U$ be a closed subset of $K$ such that $\varphi(U) = U$. We will denote by $T_U$ the operator induced by $(1)$ on the space $C(U)$.

\begin{theorem} \label{t1} (~\cite[Theorem 3.29]{Ki}) Let $\varphi$ be a homeomorphism of the compact space $K$ onto itself, $w \in C(K)$, and $(Tf)(k) = w(k)f(\varphi(k), f \in C(K), k \in K$. Assume that the set
 of all $\varphi$-periodic points is of the first category in $K$. Let $\lambda \in \sigma(T)$. Consider the following statements.

 \noindent $(R)$ The operator $\lambda I - T$ has a right inverse, or equivalently $(\lambda I - T)C(K) = C(K)$), or equivalently $\lambda \in \sigma_r(T^\prime)$.

 \noindent $(L)$  The operator $\lambda I - T$ has a left inverse, or equivalently $\|(\lambda I -T)f\| \geq C\|f\|, \; f \in C(K), C > 0$, or equivalently $\lambda \in \sigma_r(T)$.

 \noindent $(A)$ $K = E \cup Q \cup F$ where the sets $E, Q$, and $F$ are pairwise disjoint, the sets $E$ and $F$ are closed, $\varphi(E) = E$ and $\varphi(F)$ = $F$,
 $\sigma(T_E) \subset \{\xi \in \mathds{C} : |\xi| < |\lambda|\}$, and $\sigma(T_F) \subset \{\xi \in \mathds{C} : |\xi| > |\lambda|\}$.

 \noindent $(B)$
 $$ \forall k \in Q \; \bigcap \limits_{n=0}^\infty cl\{\varphi^{m}(k) : \; m \geq n\} \subseteq F$$
 and
 $$ \forall k \in Q \; \bigcap \limits_{n=0}^\infty cl\{\varphi^{-m}(k) : \; m \geq n\} \subseteq E.$$

 \noindent $(C)$
 $$ \forall k \in Q \; \bigcap \limits_{n=0}^\infty cl\{\varphi^{m}(k) : \; m \geq n\} \subseteq E$$
 and
 $$ \forall k \in Q \; \bigcap \limits_{n=0}^\infty cl\{\varphi^{-m}(k) : \; m \geq n\} \subseteq F.$$

 Then the following equivalencies hold

 \noindent $(1)$ $R \Leftrightarrow A \wedge B$.

 \noindent $(2)$ $L \Leftrightarrow A \wedge C$.

\end{theorem}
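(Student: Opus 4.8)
The plan is to prove the two equivalencies $(1)$ $R \Leftrightarrow A \wedge B$ and $(2)$ $L \Leftrightarrow A \wedge C$ in parallel, exploiting the duality between them: statement $(L)$ for $T$ corresponds, roughly, to statement $(R)$ for a related operator (passing to the inverse homeomorphism $\varphi^{-1}$ interchanges the roles of $E$ and $F$, hence of $B$ and $C$), so it suffices to carry out one direction carefully and transfer. I will focus on $(1)$.

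\medskip

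\noindent\textbf{Sufficiency} ($A \wedge B \Rightarrow R$). Assume the decomposition $K = E \cup Q \cup F$ with the stated properties. First I would observe that surjectivity of $\lambda I - T$ on $C(K)$ can be checked by solving the equation $(\lambda I - T)f = g$ on the three invariant pieces and patching. On $E$ the operator $T_E$ has spectral radius $< |\lambda|$, so $\lambda I - T_E$ is boundedly invertible on $C(E)$; on $F$ the operator $T_F$ is invertible with $\|T_F^{-1}\| \cdot |\lambda| < 1$, so again $\lambda I - T_F$ is boundedly invertible (write $\lambda I - T_F = -T_F(I - \lambda T_F^{-1})$). The real content is the part over $Q$, the "wandering" set: here condition $(B)$ says orbits are asymptotically attracted forward to $F$ and backward to $E$, so one builds a solution by a two-sided summation — forward using the contraction near $F$, backward using the contraction near $E$ — and then shows the resulting function is continuous on all of $K$ by controlling it near the closed sets $E$ and $F$ using $(B)$ together with $(A)$. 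This is essentially the iteration argument already developed in~\cite{Ki}; I would cite the relevant construction there rather than redo it.

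\medskip

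\noindent\textbf{Necessity} ($R \Rightarrow A \wedge B$). This is the harder direction and, I expect, the main obstacle. Suppose $(\lambda I - T)C(K) = C(K)$. I would first extract the set $F$ as the union of the "forward-stable" part of the orbit structure: intuitively, $F$ should be the maximal closed $\varphi$-invariant set on which $T$ restricts to an operator with spectrum outside the disk $|\xi| \le |\lambda|$, and symmetrically $E$ the maximal closed invariant set with spectrum inside the open disk. The key technical point is that if neither inclusion in $(B)$ held — say some point $k \in K$ had a forward limit set meeting the "neutral" region where $|w|$-products along the orbit neither decay nor grow past $|\lambda|$ — one could construct a normalized sequence $g_n \in C(K)$ that is not in the range, contradicting surjectivity; this uses the hypothesis that periodic points are first category (so the neutral behaviour is genuinely non-recurrent and one can localize), together with the characterization of $\sigma_r(T')$ in terms of the absence of approximate eigenvectors for $T'$. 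Concretely I would argue by contradiction: failure of $(A)$ or $(B)$ produces either an approximate eigenvalue of $T'$ at $\lambda$ (contradicting $\lambda \in \sigma_r(T')$, which is equivalent to $R$) or shows the range is a proper dense subspace.

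\medskip

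\noindent Finally, for $(2)$ I would note that $L$ for $(T,\varphi,w)$ is equivalent to $R$ for the operator $\widetilde{T}$ of the same form built from $\varphi^{-1}$ and an appropriate weight (the adjoint-type relationship $\|(\lambda I - T)f\| \ge C\|f\|$ dualizes to surjectivity), under which transformation the roles of the attracting set $F$ and the repelling set $E$ are swapped, turning condition $(B)$ into condition $(C)$. Invoking part $(1)$ for $\widetilde{T}$ then yields $(2)$. The one subtlety here is bookkeeping the weight and making sure the first-category hypothesis on periodic points is preserved under $\varphi \mapsto \varphi^{-1}$, which it is since the periodic point sets coincide.
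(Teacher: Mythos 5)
First, a point of orientation: the paper does not prove this statement at all --- it is quoted verbatim from \cite[Theorem 3.29]{Ki} and used as imported input, so there is no internal proof to compare your attempt against. Judged on its own terms, your proposal is a reasonable top-level outline of the known strategy, but both directions leave the actual mathematical content unestablished. In the sufficiency direction, the invertibility of $\lambda I - T_E$ and $\lambda I - T_F$ is indeed immediate from the spectral inclusions in $(A)$; the entire difficulty is the part over $Q$: the solution there is forced by a forward sum (converging because orbits are attracted to $F$, where $T$ dominates $\lambda$) and a backward sum (converging because backward orbits are attracted to $E$), and one must verify that these are consistent, that the resulting function extends continuously to $cl\,Q \cap E$ and $cl\,Q \cap F$, and that the solution operator is bounded. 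You acknowledge this and defer to \cite{Ki}, which is legitimate as a citation but means the proof is not self-contained. In the necessity direction you give a plan, not an argument: you do not construct $E$ and $F$ (the existence of maximal closed invariant sets with $\sigma(T_E)$ and $\sigma(T_F)$ \emph{strictly} separated from the circle $|\xi|=|\lambda|$ is itself a theorem, essentially Theorem 3.10 of \cite{Ki}, and it requires first showing $\lambda\Gamma\cap\sigma_{ap}(T)=\emptyset$ on the complement of $Q$); you do not exhibit the approximate eigenvector of $T^\prime$ that failure of $(B)$ is supposed to produce; and the first-category hypothesis --- which is precisely what excludes the periodic component $P$ appearing in Theorem~\ref{t2} --- is invoked only as a slogan.

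Second, the reduction of $(2)$ to $(1)$ is not the formal duality you suggest. The functional-analytic dual of ``$\lambda I - T$ bounded below'' is ``$\lambda I - T^\prime$ surjective'' on $M(K)=C(K)^\prime$, and $T^\prime$ is not an operator of form $(1)$ on a $C(K)$ space, so part $(1)$ cannot simply be invoked for it. The passage to the auxiliary operator built from $\varphi^{-1}$ (the operator $\tilde T$ of Remark~\ref{r1}) does interchange the roles of $E$ and $F$ and of conditions $(B)$ and $(C)$ at the level of the \emph{statements}, but the claim that $L$ for $T$ at $\lambda$ is equivalent to $R$ for such a $\tilde T$ (at which spectral parameter, and with which weight?) is a substantive assertion that in \cite{Ki} and \cite{Ki1} carries its own proof; it is not mere bookkeeping. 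As it stands, your argument for $(2)$ needs either a direct proof parallel to $(1)$ or a proved lemma establishing the $T\leftrightarrow\tilde T$ correspondence for left and right invertibility.
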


\begin{theorem} \label{t2} (~\cite[Theorem 3.31]{Ki}) Let $\varphi$ be a homeomorphism of the compact space $K$ onto itself, $M \in C(K)$, and $(Tf)(k) = M(k)f(\varphi(k), f \in C(K), k \in K$. Let $\lambda \in \sigma(T)$. The following conditions are equivalent.

\noindent $(1)$ $\lambda \in \sigma_r(T^\prime)$ (respectively, $\lambda \in \sigma_r(T))$.

\noindent $(2)$ There are $m \in \mathds{N}$ and an open subset $P$ of $K$ such that  $P \subset \Pi_m$, $\varphi(P) = P$, $\lambda \not \in \sigma(T_{cl P})$, and $K$ can be partitioned as $K = E \cup Q \cup F \cup P$ where the sets
$E,F$, and $Q$ satisfy conditions $A$ and $B$ (respectively $A$ and $C$) of Theorem~\ref{t1}.

\end{theorem}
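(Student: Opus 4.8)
The plan is to reduce Theorem~\ref{t2} to Theorem~\ref{t1} by isolating the part of $K$ on which $\varphi$ is periodic of \emph{bounded} period and ``resonance free'' for $\lambda$: once that part, $P$, is removed, the $\varphi$-periodic points of the residual system become of the first category, so Theorem~\ref{t1} applies to it verbatim. I will describe the $\sigma_r(T^\prime)$ version (surjectivity of $\lambda I-T$, condition $(B)$); the $\sigma_r(T)$ version (bounded-belowness, condition $(C)$) is entirely parallel. The main tool is the \emph{restriction principle}: for a closed $\varphi$-invariant $L\subseteq K$ the restriction map $C(K)\to C(L)$ is onto and intertwines $T$ with $T_L$, so surjectivity (resp.\ bounded-belowness) of $\lambda I-T$ passes to $\lambda I-T_L$, and — using the open mapping theorem together with norm-preserving Tietze extension — with an open-mapping constant $c$ that does not depend on $L$.

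For $(2)\Rightarrow(1)$ I would argue directly. The set $cl\,P$ is closed, $\varphi$-invariant, contained in $\Pi_m$, and $\lambda\notin\sigma(T_{cl\,P})$; so I first solve $(\lambda I-T)f=g$ on $cl\,P$, subtract a Tietze extension of that solution, and reduce to a right-hand side that vanishes on $cl\,P$. It then suffices to invert $\lambda I-T_L$ on the corresponding ideal of $C(L)$, where $L=K\setminus P=E\cup Q\cup F$ is closed and $\varphi$-invariant and still satisfies $(A)$ and $(B)$. The periodic points of $\varphi|_L$ with interior in $L$ all lie in $E\cup F$, where the inequalities $\sigma(T_E)\subset\{|\xi|<|\lambda|\}$ and $\sigma(T_F)\subset\{|\xi|>|\lambda|\}$ keep $\lambda$ off the spectrum of every periodic ``block'' and let me solve there outright; removing these blocks leaves a weighted composition operator whose periodic points are of the first category, and the implication $A\wedge B\Rightarrow R$ of Theorem~\ref{t1}(1) completes the solution. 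A routine patching — the partial solutions agree on the relevant overlaps because $\lambda$ avoids the spectrum of $T$ restricted to $\partial P$ — yields $(\lambda I-T)C(K)=C(K)$.

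For the hard direction $(1)\Rightarrow(2)$, assume $(\lambda I-T)C(K)=C(K)$ with open-mapping constant $c$. Applying the restriction principle to the finite $\varphi$-invariant orbit $O_k=\{\varphi^{(j)}(k):0\le j<d\}$ of a periodic point $k$ of period $d$, and using, with $\omega=e^{2\pi i/d}$,
$$\prod_{j=0}^{d-1}\bigl(\lambda\omega^{j}I-T_{O_k}\bigr)=\lambda^{d}I-T_{O_k}^{\,d}=\Bigl(\lambda^{d}-\prod_{j=0}^{d-1}M(\varphi^{(j)}(k))\Bigr)I,$$
I obtain the \emph{no-resonance} property $\lambda^{d}\neq\prod_{j=0}^{d-1}M(\varphi^{(j)}(k))$ for all periodic $k$, and — since $\sigma(T_{O_k})$ consists of $d$ points equally spaced in argument on the circle of radius $\rho(k):=|\prod_{j=0}^{d-1}M(\varphi^{(j)}(k))|^{1/d}$ — the quantitative bound $\mathrm{dist}(\lambda,\sigma(T_{O_k}))\ge 1/c$; in particular, any periodic point with $|\rho(k)-|\lambda||<1/2c$ has period at most some $m$ depending only on $c$ and $\|T\|$. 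I would then take $P$ to be the interior in $K$ of the set of all $\varphi$-periodic points — open, $\varphi$-invariant, consisting of periodic points — and use the bound above to see that $P\subseteq\Pi_m$, so $cl\,P\subseteq\Pi_m$. Applying the displayed identity on $cl\,P$ with $d$ replaced by $m!$, the operator $T_{cl\,P}^{\,m!}$ is multiplication by the central element $N=\prod_{j=0}^{m!-1}M(\varphi^{(j)}(\cdot))\in C(cl\,P)$, and surjectivity (via the restriction principle) forces $\lambda^{m!}\notin\sigma(N)=N(cl\,P)$, i.e.\ $\lambda\notin\sigma(T_{cl\,P})$. Finally $K\setminus P$ is closed and $\varphi$-invariant, the periodic points of $\varphi|_{K\setminus P}$ form a first-category set (a relative interior of $\Pi_n\cap(K\setminus P)$, together with $P$, would be an open set of periodic points properly containing $P$), and $\lambda\in\sigma(T_{K\setminus P})$ since $\lambda\in\sigma(T)$, $\lambda\notin\sigma(T_{cl\,P})$ and $K=cl\,P\cup(K\setminus P)$; Theorem~\ref{t1}(1) applied to $T_{K\setminus P}$ then produces $K\setminus P=E\cup Q\cup F$ with the stated properties, giving $K=E\cup Q\cup F\cup P$.

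The step I expect to be the real obstacle is precisely the assertion that surjectivity forces $P$ — the interior of the periodic set — to have uniformly bounded period and $cl\,P$ to miss $\lambda$ in the spectrum. Here the ``period'' and ``local spectral radius'' $\rho(\cdot)$ are not continuous functions (though $\rho$ becomes continuous after restriction to a fixed $\Pi_n$, where $\rho=|N|^{1/n!}$), so one must upgrade the orbitwise distance estimate $\mathrm{dist}(\lambda,\sigma(T_{O_k}))\ge 1/c$ to a genuinely topological statement: ruling out that periodic points whose $\rho$ creeps up to $|\lambda|$, or non-periodic behaviour, accumulate on $cl\,P$ so as to reintroduce $\lambda$ into $\sigma(T_{cl\,P})$, and organizing the bookkeeping so that the leftover periodic points split cleanly into the closed sets $E$ (where $\rho<|\lambda|$ uniformly) and $F$ (where $\rho>|\lambda|$ uniformly) plus a first-category remainder. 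This interplay between the dynamics and the quantitative form of surjectivity is the technical heart of the argument.
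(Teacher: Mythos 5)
This theorem is imported from \cite[Theorem 3.31]{Ki}; the present paper gives no proof of it, so your argument can only be judged on its own terms. The overall shape is reasonable --- split off an open invariant set $P$ of bounded-period, resonance-free periodic points and reduce the remainder to Theorem~\ref{t1} --- and your use of the finite orbits $O_k$ together with the open-mapping constant $c$ to obtain the uniform bound $\mathrm{dist}(\lambda,\sigma(T_{O_k}))\ge 1/c$ is a sound quantitative input. But two steps in $(1)\Rightarrow(2)$ fail as written. First, taking $P$ to be the interior of the set of \emph{all} $\varphi$-periodic points does not give $P\subseteq\Pi_m$: your period bound applies only to periodic points $k$ with $\bigl|\rho(k)-|\lambda|\bigr|<1/(2c)$, while periodic points whose orbit spectral radius stays far from $|\lambda|$ are invisible to the estimate and may have unbounded periods (e.g.\ a sequence of isolated periodic orbits of period $n\to\infty$ with $M\equiv 2$ and $|\lambda|=1$). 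Such points must be absorbed into $E$ or $F$, not into $P$; but once $P$ is cut down to the near-resonant periodic points, your argument that the periodic points of $K\setminus P$ form a first-category set (which rested on maximality of $P$ among open sets of periodic points) no longer applies, and Theorem~\ref{t1} cannot be invoked on $K\setminus P$ without further work. This is exactly the tension you flag in your closing paragraph, and the proposal does not resolve it.

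Second, the derivation of $\lambda\notin\sigma(T_{cl\,P})$ is incorrect. Surjectivity of the single factor $\lambda I-T_{cl\,P}$ does not make the product $\prod_{j}(\lambda\omega^{j}I-T_{cl\,P})=\pm(\lambda^{m!}I-T_{cl\,P}^{\,m!})$ surjective, and the conclusion $\lambda^{m!}\notin N(cl\,P)$ is false in general: if some rotation $\lambda\omega$ with $\omega^{m!}=1$, $\omega\neq 1$, resonates with a periodic orbit in $cl\,P$ (for instance a fixed point $k_0$ with $M(k_0)=-\lambda$ and $m\ge 2$), then $\lambda^{m!}=N(k_0)$ even though $\lambda I-T$ is invertible near $k_0$. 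What you should use instead is that for $cl\,P\subseteq\Pi_m$ the spectrum $\sigma(T_{cl\,P})$ is the closure of $\bigcup_{k}\sigma(T_{O_k})$, so the uniform distance bound you already established yields $\lambda\notin\sigma(T_{cl\,P})$ directly. The direction $(2)\Rightarrow(1)$ is plausible in outline, but it too leans on applying Theorem~\ref{t1} to $K\setminus P$, whose first-category hypothesis is not supplied by condition $(2)$; whether the implication $A\wedge B\Rightarrow R$ survives without it is a question about the internals of \cite{Ki} that you treat as a black box.
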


For any $m \in \mathds{N}$ let us define $w_m \in C(K)$ as
$$w_m = w(w \circ \varphi) \ldots (w \circ \varphi^{(m-1)}).$$
We will need also the following lemma that follows from~\cite[Lemma 3.6 and Theorem 3.12 ]{Ki}

\begin{lemma} \label{l1} Let $T$ be an operator of the form $(1)$ on $C(K)$ and let $\lambda \in \sigma_{ap}(T) \setminus \{0\}$. Then there is a point $k \in K$ such that for every $n \in \mathds{N}$
$$|w_n(k)| \geq |\lambda|^n. \eqno{2(a)}$$
and
$$|w_n(\varphi^{(-n)}(k))| \leq |\lambda|^n.\eqno{2(b)}$$
Moreover, either
\begin{enumerate}[(I)]
  \item $k$ is not a $\varphi$-periodic point,

  \noindent or at least one of the following conditions is satisfied.
  \item $k$ is a $\varphi$-periodic point and for any $n \in \mathds{N}$ and any open neighborhood $V$ of $k$ there is a point $v \in V$ such that either $v$ is not $\varphi$-periodic or its period is greater than $n$.
  \item $k$ is a $\varphi$-periodic point of (the smallest) period $p$ and $w_p(k) = \lambda^p$.
\end{enumerate}

\end{lemma}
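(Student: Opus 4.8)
The plan is to produce the point $k$ by a careful extraction of a "peak" along the approximate eigenvector, and then to analyze the orbit of $k$ to sort out which of the three alternatives (I), (II), (III) must hold.

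First I would use the hypothesis $\lambda\in\sigma_{ap}(T)\setminus\{0\}$ to fix a sequence $f_n\in C(K)$ with $\|f_n\|=1$ and $\|(\lambda I-T)f_n\|\to 0$. Iterating the operator, I would record the standard identity $(T^jf)(k)=w_j(k)f(\varphi^{(j)}(k))$, so that $(\lambda^jI-T^j)=\sum_{i=0}^{j-1}\lambda^{j-1-i}T^i(\lambda I-T)$ gives $\|(\lambda^jI-T^j)f_n\|\le j\,|\lambda|^{j-1}\|(\lambda I-T)f_n\|\cdot\sup\|T\|^{\,?}$ — more precisely, since $\|T^i\|$ is bounded by $\|w_i\|_\infty\le M^i$ for $M=\|w\|_\infty$, one gets an estimate that for each \emph{fixed} $j$, $\|\lambda^jf_n-w_j(f_n\circ\varphi^{(j)})\|\to 0$ as $n\to\infty$. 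This is exactly the mechanism of \cite[Lemma 3.6 and Theorem 3.12]{Ki}, and the cited results presumably already package the conclusion that there is a point $k_n$ (where $|f_n|$ nearly attains its maximum $1$) at which $|w_j(k_n)|\gtrsim|\lambda|^j$ and $|w_j(\varphi^{(-j)}(k_n))|\lesssim|\lambda|^j$ for all $j$ up to some growing range; a compactness/diagonal argument over $K$ then yields a single $k\in K$ satisfying $2(a)$ and $2(b)$ for all $n\in\mathds{N}$ simultaneously. So the first substantive step is just to quote the two results from \cite{Ki} and assemble the inequalities $2(a)$, $2(b)$ at a limit point $k$.

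The second step is the trichotomy. If $k$ is not $\varphi$-periodic we are in case (I) and there is nothing more to prove. So assume $k$ is $\varphi$-periodic of smallest period $p$. Then $2(a)$ applied with $n=p$ gives $|w_p(k)|\ge|\lambda|^p$, while $2(b)$ applied with $n=p$ at the same point (using $\varphi^{(-p)}(k)=k$) gives $|w_p(k)|\le|\lambda|^p$; hence $|w_p(k)|=|\lambda|^p$. Now I would look at whether $k$ can be approximated by points that are non-periodic or have large period: if for every $n$ and every neighborhood $V$ of $k$ such a point exists, we are in case (II) and are done. Otherwise there is some $n_0$ and a neighborhood $V_0$ of $k$ every point of which is $\varphi$-periodic of period at most $n_0$; I would shrink $V_0$ (using continuity of the finitely many maps $\varphi^{(1)},\dots,\varphi^{(n_0)}$ and that $k$ has period exactly $p$) to arrange that every point of $V_0$ has period exactly $p$ and $\varphi^{(p)}$ is the identity on $V_0$. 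On this neighborhood $w_p$ is a genuine weight for the "rotation-free" operator, i.e. $(T^p g)(v)=w_p(v)g(v)$ for $v\in V_0$; restricting to the clopen-ish piece generated by $V_0$ under $\varphi$, the spectrum of $T^p$ there contains the closure of the values $w_p(v)$, and one shows $\lambda^p$ must be among them — here is where one uses that $\lambda\in\sigma_{ap}$ is witnessed near $k$, so $\lambda^p\in\sigma_{ap}(T^p\restriction)$, and on a multiplication-type operator on $C$ of a neighborhood the approximate point spectrum is exactly the range of the multiplier; by continuity $w_p(k)=\lambda^p$, which is case (III).

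The step I expect to be the main obstacle is the last one: passing from "$|w_p(k)|=|\lambda|^p$ on a neighborhood of $k$ consisting of periodic points of period $p$" to the \emph{equality} $w_p(k)=\lambda^p$ (matching the argument, not merely the modulus). The modulus is forced cheaply by $2(a)$–$2(b)$, but the phase requires genuinely using that $\lambda$ is an approximate eigenvalue \emph{localized} near $k$ — one must show the approximate eigenvector can be taken supported near $k$, reduce to the operator $g\mapsto w_p\cdot g$ (after absorbing the finite cyclic permutation $\varphi$ performs on the $p$-cycle through $k$ into a block-diagonal form), and invoke the description of $\sigma_{ap}$ of such an operator. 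Care is needed because $k$ might be a non-isolated point of $\Pi_p$, so one cannot simply split off a clopen cycle; the neighborhood $V_0$ is only open, and one must work with $C_0$-type arguments or with the ideal of functions vanishing outside $V_0$. Everything else is routine: the iteration estimates, the compactness extraction of $k$, and the verification of $2(a)$, $2(b)$ all follow the pattern already established in \cite{Ki}.
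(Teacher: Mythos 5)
The paper does not actually prove this lemma; it is quoted as a consequence of Lemma 3.6 and Theorem 3.12 of \cite{Ki}, so your task was to reconstruct that argument. Your first half --- approximate eigenvectors $f_n$, the iteration identity $\lambda^j f_n \approx w_j\,(f_n\circ\varphi^{(j)})$ for each fixed $j$, evaluation at near-maximizers $k_n$ of $|f_n|$ and at $\varphi^{(-j)}(k_n)$, and a compactness/diagonal extraction of a limit point $k$ satisfying $2(a)$ and $2(b)$ --- is correct and is indeed the mechanism behind the cited results.

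The gap is in the periodic case, and it is twofold. First, the ``shrinking'' step is false: the set $\{x:\varphi^{(p)}(x)=x\}$ is closed but in general not open, so from the failure of (II) you only get a neighborhood $V_0$ consisting of periodic points of period $\le n_0$; you cannot arrange that every point of $V_0$ has period exactly $p$, and the nearby peak points may have periods that are proper multiples of $p$. Your multiplication-operator argument then yields at best $w_N(k)=\lambda^N$ for $N=n_0!$, which does not imply $w_p(k)=\lambda^p$ for the \emph{smallest} period $p$. Second, and more seriously, your overall strategy of verifying the trichotomy at the limit point $k$ of the peaks cannot work: take $K=\{k\}\cup\{a_n,b_n:n\in\mathds{N}\}$ with $a_n,b_n\to k$, $\varphi$ fixing $k$ and swapping $a_n\leftrightarrow b_n$, and $w\equiv -1$. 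Then $1\in\sigma_{ap}(T)$ (exact eigenvectors supported on $\{a_n,b_n\}$), every point satisfies $2(a)$ and $2(b)$ for $\lambda=1$, and the peaks converge to $k$; but at $k$ the smallest period is $1$ with $w_1(k)=-1\neq 1$, condition (II) fails (all nearby points have period $\le 2$), so the trichotomy fails at $k$. The lemma is saved only because the witness must be taken to be one of the nearby periodic points $a_n$ themselves, where $w_2(a_n)=1=\lambda^2$. So the selection of $k$ in the periodic case has to be built into the approximate-eigenvector analysis (as in \cite[Theorem 3.12]{Ki}), not performed after the fact at the limit of the peaks; this is the missing idea in your proposal.
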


Now we can start working on a complete description of Fredholm and semi-Fredholm spectra of operators of form $(1)$.

\begin{lemma} \label{l2} Let $T$ be an operator of the form $(1)$. Assume that $\lambda \in \sigma_{ap}(T) \setminus \{0\}$, that the operator $\lambda I - T$ is semi-Fredholm, and that $nul(\lambda I - T) < \infty$. Let $k \in K$ be a point from the statement of Lemma~\ref{l1}.

Then $k$ is an isolated point of $K$.

\end{lemma}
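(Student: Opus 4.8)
The plan is to argue by contradiction: suppose $k$ is not isolated in $K$, and construct an infinite-dimensional subspace of $\ker(\lambda I - T)$, or else violate the closed-range / small-nullity hypothesis directly. The mechanism is to exploit the inequalities $2(a)$ and $2(b)$ from Lemma~\ref{l1} together with the trichotomy (I)--(III) to produce, for each $\varepsilon > 0$, a unit-norm function $f$ supported near $k$ with $\|(\lambda I - T)f\|$ small; if $k$ is a limit point of $K$ these supports can be chosen to shrink, so one gets a sequence of such functions with shrinking (eventually nested-disjoint) supports. Since a semi-Fredholm operator with finite nullity is bounded below modulo a finite-dimensional subspace (equivalently, $\lambda I - T$ restricted to a finite-codimensional closed subspace is bounded below), infinitely many almost-null unit vectors with essentially disjoint supports cannot all live close to a fixed finite-dimensional space; this is the desired contradiction.

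First I would recall precisely what "$\lambda I - T$ semi-Fredholm with $nul(\lambda I - T) < \infty$" buys us: there is a closed finite-codimensional subspace $X_0 \subseteq C(K)$ and a constant $c > 0$ with $\|(\lambda I - T)f\| \geq c\|f\|$ for all $f \in X_0$ (standard: split off a complement of the finite-dimensional kernel and use closed range). Next, using the point $k$ from Lemma~\ref{l1}, I would build approximate null functions. In case (I), $k$ is non-periodic; choose a long finite orbit segment $k, \varphi^{(1)}(k), \dots, \varphi^{(N)}(k)$ of distinct points, pick disjoint neighborhoods, and using $2(a)$--$2(b)$ define a function that looks like a normalized "propagated" bump along the orbit, so that $T f$ nearly equals $\lambda f$; the defect comes only from the two endpoints of the segment and from the variation of $w$, both controllable, giving $\|(\lambda I - T)f\| \leq \varepsilon$. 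In case (II), the point $k$ is periodic but arbitrarily close to points whose period exceeds any prescribed $n$ (or are non-periodic), so we can instead center the construction at such a nearby long-orbit point; in case (III), $w_p(k) = \lambda^p$ at a periodic point of period $p$, and one builds a genuine or near eigenfunction concentrated on the periodic orbit. In every case the constructed $f$ can be taken with support inside a prescribed neighborhood of the orbit of $k$.

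The key leverage is then: if $k$ were a non-isolated point, every neighborhood of $k$ contains points of $K$ other than the orbit points already used, and (because $\varphi$ is a homeomorphism) we can carry out the above construction inside an arbitrarily small neighborhood of $k$, producing a sequence $f_j$ of unit vectors with $\|(\lambda I - T)f_j\| \to 0$ whose supports can be arranged to be eventually pairwise disjoint (shrink the neighborhoods and pass to a subsequence using that $k$ is a limit point). Any such sequence has $\mathrm{dist}(f_j, X_0) \to 0$ impossible in conjunction with $\|(\lambda I - T)f_j\| \to 0$ unless $\mathrm{dist}(f_j, X_0) \not\to 0$; but the $f_j$ with disjoint supports are "asymptotically orthogonal" in $C(K)$ in the sense that no finite-dimensional (hence finite-codimensional) subspace can absorb infinitely many of them — formally, a finite-codimensional subspace meets $\overline{\mathrm{span}}\{f_j\}$ in something of finite codimension there, forcing some nontrivial combination $g = \sum a_j f_j \in X_0$ with $\|g\| = 1$, and then $\|(\lambda I - T) g\|$ can be estimated by the disjoint-support structure to be $\leq \sup_j \|(\lambda I - T) f_j\| \cdot (\text{bounded factor})$, contradicting $\|(\lambda I - T)g\| \geq c$.

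The main obstacle I anticipate is the disjoint-support bookkeeping in cases (II) and (III): when $k$ is periodic the orbit is finite, so one cannot simply "walk along the orbit" to get new room, and one must genuinely use the hypothesis in (II) that points of unboundedly large period cluster at $k$ (so the construction is recentered at those points, whose long orbits do provide room), while in (III) one must check that the eigenfunction on the isolated periodic orbit does not already contradict $nul(\lambda I - T) < \infty$ unless the orbit is isolated — i.e. that non-isolation of $k$ in case (III) forces, via (II)-type behavior in neighborhoods, the same clustering of long-period or non-periodic points. Verifying that Lemma~\ref{l1} really does supply this clustering whenever $k$ is not isolated, and making the neighborhoods shrink while keeping the propagated-bump norm estimates uniform, is the delicate part; the rest is the routine perturbation estimate $\|(\lambda I - T) f\| \leq \varepsilon$ using continuity of $w$ and the inequalities $2(a)$, $2(b)$.
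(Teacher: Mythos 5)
Your overall mechanism---produce unit vectors $f_j$ with eventually pairwise disjoint supports and $\|(\lambda I - T)f_j\|\to 0$, then contradict the fact that a semi-Fredholm operator with finite nullity is bounded below on a closed finite-codimensional subspace---is sound, and it is the same underlying idea as the paper's, executed differently. The paper works in the bidual $C''(K)$, using characteristic functions of singletons $\chi_{\{k_n\}}$ as exact atomic test elements (so $(T'')^j\chi_{\{k_n\}}$ is computed pointwise with no neighborhood bookkeeping), concludes via \cite[Cor.\ I.4.7]{EE} that $\lambda I - T''$ cannot be semi-Fredholm with finite nullity, and transfers this back to $T$ by \cite[Thm.\ I.3.7]{EE}; you stay in $C(K)$ with continuous bumps and replace the citation by an elementary finite-codimension argument. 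That substitute is fine provided you also arrange the enlarged supports $\mathrm{supp}\,f_j\cup\varphi^{(-1)}(\mathrm{supp}\,f_j)$ to be pairwise disjoint, so that $\bigl\|\sum_j a_j(\lambda I-T)f_j\bigr\|\le \max_j|a_j|\cdot\sup_j\|(\lambda I-T)f_j\|$.

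The genuine gap is in case (III). You claim that non-isolation of a periodic point $k$ forces "(II)-type clustering of long-period or non-periodic points." This is false: take $\varphi$ to be the identity (or any periodic homeomorphism) on a perfect compact set; every neighborhood of $k$ then consists of periodic points of bounded period, yet $k$ is not isolated. In exactly that excluded situation your "near eigenfunction concentrated on the periodic orbit" also fails as stated, because for a bump $g$ near $k$ the function $f=\sum_{i=0}^{p-1}\lambda^{-i}T^i g$ has defect $Tf-\lambda f=\lambda^{1-p}w_p\cdot(g\circ\varphi^{(p)})-\lambda g$, and $\varphi^{(p)}$ fixes the point $k$ but need not fix the support of $g$, so $g\circ\varphi^{(p)}\ne g$ and the defect is not small. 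The correct dichotomy is: either every neighborhood of $k$ contains non-periodic points or points of arbitrarily large period (and then you are genuinely in the case (II) machinery, recentering at those points), or else $k\in \mathrm{Int}\,\Pi_N$ for some $N$; in the latter case $\varphi^{(N!)}$ is the identity pointwise on a whole neighborhood of $k$, so $g\circ\varphi^{(N!)}=g$ for $g$ supported there, the defect reduces to $(\lambda^{1-N!}w_{N!}-\lambda)g$, and this is small because $w_{N!}$ is continuous and $w_{N!}(k)=\lambda^{N!}$. With that repair (and with the care you already flag about keeping the orbit segments of the recentered base points pairwise disjoint, including the sub-case in (I) where $k$ is a limit point of its own orbit), your argument goes through.
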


\begin{proof} Let us first assume that $k$ satisfies condition $(III)$ of Lemma~\ref{l1}. If $k$ is not an isolated point in $K$ then we can find a sequence of points $k_n \in K$ with the properties.
\begin{itemize}
  \item If $m \neq n$ then $\varphi^{(i)}(k_m) \neq \varphi^{(j)}(k_n), 0 \leq i, j \leq p-1$.
  \item $w(\varphi^{(i)}(k_n)) \mathop \rightarrow \limits_{n \to \infty} w(\varphi^{(i)}(k)), \, i=0,1, \ldots , p-1$.
\end{itemize}

Let $u_n$ be the characteristic function of the singleton set $\{k_n\}$. Then $u_n$ can be considered as an element of the second dual
$C^{\prime \prime}(K)$ of norm one. Let $v_n = \sum \limits_{i=0}^{n-1} {\lambda}^{-i} (T^{\prime \prime})^i u_n$. Then it is immediate to see that $\|v_n\| \geq \|u_n\| = 1$ and that $T^{\prime \prime} v_n - \lambda v_n \mathop \rightarrow \limits_{n \to \infty} 0$. Moreover, $v_n$ are pairwise disjoint elements of the Banach lattice $C^{\prime \prime} (K)$ and therefore the sequence $v_n$ cannot contain a norm convergent subsequence. Therefore
(see e.g.~\cite[Corollary I.4.7, page 43]{EE}) the operator $\lambda I - T^{\prime \prime}$ cannot be semi-Fredholm and have a finite dimensional null space. But then~\cite[Theorem I.3.7]{EE} the same is true for the operator $\lambda I - T$ and we come to a contradiction.

Next let us look at the case when $k$ satisfies condition $(II)$ of Lemma~\ref{l1}. In this case it is not difficult to see that we can find points $k_n \in K$ and positive integers $m(n)$ with the properties
\begin{enumerate}[(a)]
  \item $m(n) \mathop \rightarrow \limits_{n \to \infty} \infty$.
  \item For every $n \in \mathds{N}$ all the points $\varphi^{(i)}(k_n), |i| \leq m(n) + 1$, are distinct.
  \item The sets $E_n = \{\varphi^{(i)}(k_n) : \; |i| \leq m(n) + 1 \}, n \in \mathds{N}$ are pairwise disjoint.
  \item For any $n \in \mathds{N}$ the following inequalities hold
  $$|w_i(k_n)| \geq \frac{1}{2} |\lambda|^i, i=1,2, \ldots , m(n) +1$$
  and
  $$|w_i(\varphi^{(-i)}(k_n))| \leq 2 |\lambda|^i, i=1,2, \ldots , m(n) +1$$
\end{enumerate}

Let $u_n$ be the characteristic function of the set $\{\varphi^{(m(n)}(k_n)\}$ and let
$$v_n = \sum \limits_{j=0}^{2m(n)+1} \big{(} 1-\frac{1}{\sqrt{m(n)}} \big{)}^{|j - m(n)|} \overline{\lambda}^j (T^{\prime \prime})^j u_n. $$
Then simple estimates similar to the ones in~\cite{Ki} or~\cite{Ki1} show that
$$\|T^{\prime \prime} v_n - \lambda v_n \| = o(\|v_n\|), n \to \infty .$$
Now we come to a contradiction exactly like on the previous step of the proof.

Finally let us look at the case when the point $k$ is not $\varphi$-periodic. First let us notice that $k \not \in cl \{\varphi^{(i)}(k), i \in \mathds{N}\}$. Indeed, if $k$ were a limit point of the sequence $\varphi^{(i)}(k), k \in \mathds{N}$ then taking into consideration that all the points of this sequence are distinct we can easily produce points $k_n$ from this sequence and positive integers $m(n)$ with properties (a) - (d) above, and come to a contradiction. Similarly we can prove that $k \not \in cl\{\varphi^{(-i)}(k), i \in \mathds{N}\}\}$. Thus $k$ is an isolated point in the closure of its $\varphi$-trajectory. But we assumed that $k$ is not isolated in $K$ and then again simple topological reasons show that we can construct sequences $k_n \in K$ and $m(n) \in \mathds{N}$ with properties (a) - (d). The resulting contradiction ends the proof of the lemma.
\end{proof}

\begin{corollary} \label{c1} Let $T$ be an operator of form $(1)$ on $C(K)$ and let $\lambda \in \mathds{C} \setminus \{0\}$ be such that the operator $\lambda I - T$ is semi-Fredholm and $nul(\lambda I - T) < \infty$. Then either
$$ nul(\lambda I - T) = 0,$$
or there is a finite set $\{k_1, \ldots , k_m\}$ of points isolated in $K$  such that if $F = K \setminus \{\varphi^{(n)}(k_i), n \in \mathds{Z}, i = 1, \ldots , m\}$ then the operator $\lambda I - T_F$ is semi-Fredholm on $C(F)$ and
$$ nul(\lambda I - T_F) = 0. $$
\end{corollary}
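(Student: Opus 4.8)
The plan is to analyze the finite-dimensional kernel directly in terms of the points supplied by Lemma~\ref{l1} and Lemma~\ref{l2}. First I would observe that if $\lambda I - T$ is semi-Fredholm with $nul(\lambda I - T) < \infty$ and $\lambda \neq 0$, then either $\lambda \notin \sigma_{ap}(T)$ (in which case $\lambda I - T$ is bounded below, hence $nul(\lambda I - T) = 0$ and we are done with the first alternative), or $\lambda \in \sigma_{ap}(T) \setminus \{0\}$. In the latter case Lemma~\ref{l1} produces at least one point $k$ of the required type, and Lemma~\ref{l2} tells us every such $k$ is isolated in $K$. The key point is then that the set of such ``witness'' points must be finite: each isolated point $k$ on which the obstruction is supported contributes (via its $\varphi$-orbit) an independent direction to a kernel-like construction, so infinitely many pairwise $\varphi$-inequivalent such points would, by the disjointness argument already used in the proof of Lemma~\ref{l2}, contradict $nul(\lambda I - T) < \infty$. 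So I would formally define $\{k_1,\dots,k_m\}$ to be a maximal set of pairwise $\varphi$-inequivalent isolated points satisfying $2(a)$ and $2(b)$ together with one of conditions $(I)$--$(III)$ of Lemma~\ref{l1}, and show this set is finite by the same pairwise-disjointness-of-approximate-eigenvectors mechanism.

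Next I would set $F = K \setminus \{\varphi^{(n)}(k_i) : n \in \mathbb{Z},\ i = 1,\dots,m\}$ and check that $F$ is closed with $\varphi(F) = F$, so that $T_F$ is a well-defined operator of the form $(1)$ on $C(F)$: since each $k_i$ is isolated and not $\varphi$-periodic (or $\varphi$-periodic, in which case its orbit is finite), each full orbit $\{\varphi^{(n)}(k_i)\}$ is an open discrete subset whose only possible accumulation points lie in it by the argument in Lemma~\ref{l2} (where it was shown $k \notin cl\{\varphi^{(i)}(k) : i \in \mathbb{N}\}$ and likewise for negative iterates); hence removing all of them leaves a closed invariant set. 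Then I would argue that $\lambda I - T_F$ is still semi-Fredholm: restricting to an invariant closed subset corresponds to compressing by a band projection in the relevant sense, and the decomposition $C(K)$ splits (up to the finitely many isolated orbit points, which contribute a finite-dimensional complemented piece) as the part over $F$ plus a finite-dimensional part; semi-Fredholmness and finiteness of the nullity pass to the direct summand $C(F)$.

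Finally, the heart of the argument is to show $nul(\lambda I - T_F) = 0$. Suppose not; then $\lambda \in \sigma_{ap}(T_F) \setminus \{0\}$ (a nonzero kernel element is certainly an approximate eigenvector), and $\lambda I - T_F$ is still semi-Fredholm with finite nullity, so Lemma~\ref{l1} applied to $T_F$ on $C(F)$ yields a point $k \in F$ satisfying $2(a)$, $2(b)$ and one of $(I)$--$(III)$ relative to $\varphi|_F$; these conditions only involve $w$ and iterates of $\varphi$ along the orbit of $k$, all of which stay inside the invariant set $F$, so $k$ is also such a witness point for $T$ on $C(K)$. By Lemma~\ref{l2} (applied on $C(F)$, or directly since isolatedness is inherited) $k$ is isolated, so $k$ would be an isolated $\varphi$-inequivalent witness point disjoint from all the $k_i$, contradicting maximality of $\{k_1,\dots,k_m\}$. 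Hence $nul(\lambda I - T_F) = 0$.

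I expect the main obstacle to be the bookkeeping in the second paragraph: verifying cleanly that $F$ is closed and $\varphi$-invariant and that semi-Fredholmness together with finiteness of the nullity genuinely transfer from $C(K)$ to $C(F)$. The subtlety is that the orbit $\{\varphi^{(n)}(k_i)\}$, while discrete, need not be closed a priori if $k_i$ is not isolated in the closure of its trajectory --- but this is precisely what was ruled out inside the proof of Lemma~\ref{l2}, so the argument there must be reused rather than merely cited. Everything else --- the finiteness of the witness set and the final maximality contradiction --- reduces to the pairwise-disjoint-approximate-eigenvector construction that is already in place.
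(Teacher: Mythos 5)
Your overall route is the one the paper intends: the corollary is meant to follow directly from Lemma~\ref{l1}, Lemma~\ref{l2}, and the pairwise-disjoint-approximate-eigenvector mechanism, exactly as you organize it (witness points exist, are isolated, are finite in number modulo $\varphi$-equivalence, and removing their orbits kills the approximate point spectrum at $\lambda$). Two remarks on the details. First, the worry you flag as the ``main obstacle'' is a non-issue: since each $k_i$ is isolated and $\varphi$ is a homeomorphism, every point $\varphi^{(n)}(k_i)$ is isolated, so the union $U$ of the orbits is \emph{open} and $F=K\setminus U$ is automatically closed and $\varphi$-invariant; you do not need the orbit itself to be closed, and you do not need to re-run the argument from Lemma~\ref{l2} for this.

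Second, the justification you give in the middle paragraph for the semi-Fredholmness of $\lambda I - T_F$ is wrong as written: the ideal of functions vanishing on $F$ is not a finite-dimensional complemented piece, because the orbit of a non-periodic $k_i$ is infinite, so that ideal is infinite-dimensional and $C(K)$ does not split off a finite-dimensional summand. Fortunately this step is unnecessary. Your final paragraph, run with the hypothesis ``$\lambda\in\sigma_{ap}(T_F)$'' instead of ``$nul(\lambda I - T_F)>0$'', shows directly that $\lambda\notin\sigma_{ap}(T_F)$: Lemma~\ref{l1} applied to $T_F$ needs only $\lambda\in\sigma_{ap}(T_F)\setminus\{0\}$ and produces a witness $k\in F$; the conditions $2(a)$, $2(b)$ and $(I)$--$(III)$ are intrinsic to the orbit (with $(II)$ for $F$ implying $(II)$ for $K$), so Lemma~\ref{l2} applied to $T$ on $C(K)$ forces $k$ to be isolated in $K$, contradicting maximality of $\{k_1,\dots,k_m\}$. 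But $\lambda\notin\sigma_{ap}(T_F)$ means $\lambda I - T_F$ is bounded below, which yields both the semi-Fredholm property and $nul(\lambda I - T_F)=0$ at once, so the transfer-of-semi-Fredholmness step should simply be deleted.
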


\begin{lemma} \label{l3} Let $T$ be an operator on $C(K)$ of form $(1)$, $\lambda \in \mathds{C} \setminus \{0\}$, $\lambda I - T$ be semi-Fredholm, $nul(\lambda I - T) < \infty$, and let $k$ be an isolated not $\varphi$-periodic point of $K$ satisfying inequalities $2(a)$ and $2(b)$ from the statement of Lemma~\ref{l1}. Let
$R = \bigcap \limits_{i=1}^\infty \{\varphi^{(n)}(k), n \geq i \}$ and $L = \bigcap \limits_{i=1}^\infty \{\varphi^{(-n)}(k), n \geq i \}$.

Then
$$\sigma(T_R) \subset \{\xi \in \mathds{C}: \; |\xi| > |\lambda|\}  \eqno{3(a)}$$
and
$$\sigma(T_L) \subset \{\xi \in \mathds{C}: \; |\xi| < |\lambda|\}.   \eqno{3(b)}$$

\end{lemma}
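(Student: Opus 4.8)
The plan is to combine Corollary~\ref{c1} with Theorem~\ref{t2} and then to locate the limit sets $R$ and $L$ among the pieces of the resulting partition. By Corollary~\ref{c1}, after possibly deleting the $\varphi$-trajectories of finitely many isolated points we obtain a set $\Phi\subseteq K$ (equal to $K$ when $nul(\lambda I-T)=0$) on which $\lambda I-T_\Phi$ is bounded below; since one only removes isolated points, $\Phi$ is again closed, hence compact, and $\varphi(\Phi)=\Phi$. Because $k$ is isolated and not $\varphi$-periodic, the iterates $\varphi^{(n)}(k)$ are pairwise distinct and isolated, so a point of the $\omega$-limit set $R$ or of the $\alpha$-limit set $L$ — being a limit of infinitely many of them — cannot be one of the deleted isolated points; thus $R\cup L\subseteq\Phi$, $\varphi(R)=R$, $\varphi(L)=L$, $T_R=(T_\Phi)_R$ and $T_L=(T_\Phi)_L$. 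As $\lambda I-T_\Phi$ is bounded below it satisfies condition $(L)$ of Theorem~\ref{t1}, so Theorem~\ref{t2} gives a partition $\Phi=E\cup Q\cup F\cup P$ with $E,F$ closed and $\varphi$-invariant, $P$ open with $\varphi(P)=P$ and consisting of $\varphi$-periodic points of bounded period, $\sigma(T_E)\subset\{|\xi|<|\lambda|\}$, $\sigma(T_F)\subset\{|\xi|>|\lambda|\}$, $\lambda\notin\sigma(T_{cl\,P})$, and condition $(C)$: for every $z\in Q$ the forward limit set of $z$ is contained in $E$ and its backward limit set in $F$. (If $\lambda\notin\sigma(T_\Phi)$ the same partition is available from Theorem~\ref{t1} or by a direct argument, both $(L)$ and $(R)$ then holding.)

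The crux is to prove $R\subseteq F$. Since the pairwise distinct forward iterates of $k$ accumulate at every point of $R$ while $P$ is open and consists of periodic points, we get $R\cap P=\emptyset$. Next, $R\subseteq E$ or $R\subseteq F$: the sets $E$ and $F$ produced by Theorem~\ref{t1} form an attractor--repeller pair for $\varphi$ on $\Phi$ (the forward flow drains the connecting orbits in $Q$ into the ``contracting'' piece $E$), hence $(E\cap R,\,F\cap R)$ is an attractor--repeller pair for $\varphi|_R$; but $R$, being an $\omega$-limit set, is chain transitive and so admits no non-trivial attractor--repeller pair, whence $E\cap R\in\{\emptyset,R\}$, and condition $(C)$ (the forward limit set of any $z\in Q\cap R$ lies in $E\cap R$) then rules out $Q\cap R\ne\emptyset$ in either case, giving $R\subseteq F$ or $R\subseteq E$. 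Finally $R\subseteq E$ is impossible: the spectral radius $\rho$ of $T_E$ satisfies $\rho<|\lambda|$, so picking $\varepsilon>0$ with $\rho+\varepsilon<|\lambda|$ and $p$ with $\sup_{E}|w_p|<(\rho+\varepsilon)^p$, continuity of $w_p$ gives an open $W\supseteq E$ with $\sup_{W}|w_p|<(\rho+\varepsilon)^p$; as the forward orbit of $k$ eventually lies in $W$ (because $R\subseteq E\subseteq W$), blocking the product into factors $w_p$ yields $|w_n(k)|\le C(\rho+\varepsilon)^n$ for all $n$ and a constant $C$, which against $|w_n(k)|\ge|\lambda|^n$ from inequality $2(a)$ forces $(|\lambda|/(\rho+\varepsilon))^n\le C$ for all $n$ — absurd. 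Hence $R\subseteq F$ and $\sigma(T_R)\subseteq\sigma(T_F)\subset\{|\xi|>|\lambda|\}$, which is $3(a)$.

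The inclusion $L\subseteq E$ — whence $\sigma(T_L)\subseteq\sigma(T_E)\subset\{|\xi|<|\lambda|\}$, i.e.\ $3(b)$ — follows symmetrically: the same dichotomy applied to the backward orbit gives $L\subseteq E$ or $L\subseteq F$, and $L\subseteq F$ is excluded because $\sigma(T_F)\subset\{|\xi|>|\lambda|\}$ forces $w$ to be bounded away from $0$ on $F$ and $\tau:=\lim_n(\inf_{F}|w_n|)^{1/n}=1/r(T_F^{-1})>|\lambda|$, so that the backward orbit of $k$ eventually lying near $F$ yields $|w_n(\varphi^{(-n)}(k))|\ge c\,(\tau-\varepsilon)^n$ for all large $n$ and some $c>0$, contradicting inequality $2(b)$.

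The main obstacle is the dichotomy ``$R\subseteq E$ or $R\subseteq F$'' (and its analogue for $L$): a priori $R$ could meet both $E$ and $F$, and ruling this out needs more than condition $(C)$ — namely that $E$ and $F$ genuinely form an attractor--repeller pair for $\varphi|_\Phi$, a feature inherent in the construction behind Theorem~\ref{t1} though not recorded in its statement — together with the chain transitivity of $\omega$-limit sets, which must be formulated via finite open covers since $K$ need not be metrizable. Secondary nuisances are the bookkeeping when $k$ is itself among the points deleted in passing to $\Phi$, and the edge case $\lambda\notin\sigma(T_\Phi)$.
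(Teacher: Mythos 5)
Your overall architecture is genuinely different from the paper's. The paper never forms a global partition of $K$: it works directly with the restricted operator $T_R$, first showing $\lambda\notin\sigma_{ap}(T_R)$ (any approximate-eigenvalue witness in $R$ would be a non-isolated point satisfying $2(a)$--$2(b)$, contradicting Lemma~\ref{l2}), and then splitting into the two remaining cases $\lambda\in\sigma_r(T_R)$ and $\lambda\notin\sigma(T_R)$. In the second case $R=E\cup F$ with no connecting piece, and the only topological dynamics needed is the easy fact that an $\omega$-limit set cannot be split into two nonempty disjoint closed invariant sets. The first case --- which is exactly your problematic configuration where $R$ meets the connecting region $Q$ --- is killed in the paper not by topological dynamics but by a \emph{second} application of the semi-Fredholm hypothesis: a nonempty $Q$ in the partition of $R$ forces $|w_m|$ to oscillate across the level $|\lambda|^m$ along arbitrarily long blocks of the forward orbit of $k$, and from this one manufactures pairwise disjoint elements $v_{m(i)}$ of $C''(K)$ with $\|T''v_{m(i)}-\lambda v_{m(i)}\|=o(\|v_{m(i)}\|)$, contradicting semi-Fredholmness with finite nullity via \cite[Corollary I.4.7]{EE}. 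Your plan replaces both of these steps by one purely dynamical dichotomy, at the price of never using the semi-Fredholm hypothesis beyond Corollary~\ref{c1}.

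That dichotomy is where your proof has a real gap, and you say so yourself: you assert that $E$ and $F$ form an attractor--repeller pair and that $\omega$-limit sets are chain transitive, but prove neither. Two comments. First, you do not actually need to open up ``the construction behind Theorem~\ref{t1}'': Conley's characterization says that a pair of disjoint compact invariant sets such that every other point has its $\omega$-limit in one and its $\alpha$-limit in the other \emph{is} an attractor--repeller pair, so condition $(C)$ alone would suffice --- but that characterization, the internal chain transitivity of $\omega$-limit sets, and the nonexistence of proper attractors in chain transitive sets are all stated in the literature for compact \emph{metric} spaces, and $K$ here is an arbitrary compact Hausdorff space; transporting them through the unique uniformity of $K$ is plausible but is precisely the content you would have to supply, and it is a nontrivial fraction of the whole proof. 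Second, even granting the dichotomy, note that your endgame differs from the paper's only superficially: your exclusion of $R\subseteq E$ by blocking $|w_n(k)|$ against $\rho(T_E)<|\lambda|$ and of $L\subseteq F$ against $\rho(T_F^{-1})<1/|\lambda|$ is correct and is essentially the paper's use of $2(a)$ in its final case. So the proposal is a viable alternative route, but as written the central step is an IOU; the paper's own proof closes that step with the explicit disjoint-singular-sequence construction, which is the idea your writeup lacks.
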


\begin{proof} First notice that $\lambda \not \in \sigma_{ap}(T_R)$. Indeed, otherwise there would be a point $r \in R$ satisfying the inequalities $2(a)$ and $2(b)$. Because $r$ is not an isolated point in $K$ we come to a contradiction with Lemma~\ref{l2}. Therefore there are two possibilities: either $\lambda \in \sigma_r(T_R)$ or $\lambda \not \in \sigma(T_R)$. Let us assume first that $\lambda \in \sigma_r(T_R)$.
Then we can apply Theorem~\ref{t2} to the operator $T_R$. Notice that if $P$ is the subset of $R$ from the statement of Theorem~\ref{t2} the we have $\lambda \Gamma \cap \sigma(T_{cl P}) = \emptyset$. Indeed, otherwise there would be a $\varphi$-periodic point $s \in cl P$ such that
$|w_{p-1}(s)| = |\lambda|^p$ where $p$ is the period of $s$. But because in $K$ the point $s$ is a limit point of the sequence $\varphi^{(n)}(k), n \in \mathds{N}$ we come to a contradiction with Lemma~\ref{l2}. Applying Theorem 3.10 from~\cite{Ki} we see that the set $R$ can be partitioned as $R = E \cup Q \cup F$ where all the sets $E, Q, F$ are nonempty and have the properties $(A)$ and $(C)$ from the statement of Theorem~\ref{t1}. To bring our assumption that $\lambda \in \sigma_r(T_R)$ to a contradiction let $m \in \mathds{N}$ and $R_m = \bigcap \limits_{i=1}^\infty \{\varphi^{(nm)}(k), n \geq i \}$. Obviously $R_m$ is a closed subset of $R$ and $\varphi^{(m)}(R_m) =R_m$. Notice that the set $R_m \cap Q$ is nonempty. Indeed, otherwise we would have $R_m \subset E \cup F$ and because $\varphi (E \cup F) = E \cup F$ it would follow that $R = E \cup F$ in contradiction with our assumption that $Q \neq \emptyset$. It follows from property $(C)$ in the statement of Theorem~\ref{t1} that the sets $R_m \cap E$ and $R_m \cap F$ are nonempty as well. If $m$ is large enough then in some open neighbourhood of $E$ in $K$ we have the inequality $|w_m| \leq |\lambda|^m$ and therefore there is a $p \in \mathds{N}$ such that $|w_m(\varphi^{(pm)}(k)) \leq |\lambda|^m$ and
$|w_m(\varphi^{((p+1)m)}(k)) \geq |\lambda|^m$.  Let $u_m$ be the characteristic function of the singleton $\{\varphi^{((p+2)m)}(k)\}$ and
$$v_m = \sum \limits_{j=0}^{2m} \big{(} 1-\frac{1}{\sqrt{m}} \big{)}^{|j - m|} {\lambda}^{-j} (T^{\prime \prime})^j u_m. $$
Next notice that we can find the sequences $m(i), p(i), i \in \mathds{N}$ such that $\lim_{i \to \infty} m(i) = \infty$, $|w_{m(i)}(\varphi^{(pm(i))})(k)) \leq |\lambda|^{m(i)}$, $|w_{m(i)}(\varphi^{((p+1)m(i))})(k)) \geq |\lambda|^{m(i)}$, and the elements $v_{m(i)}$ are pairwise disjoint in $C^{\prime \prime}(K)$. Finally notice that as in~\cite{Ki} and~\cite{Ki1} we can show that $\|T^{\prime \prime} v_{m(i)} - \lambda v_{m(i)} \| = o(\|v_{m(i)}\|), i \to \infty .$ in contradiction with our assumption that  $\lambda I - T$ is semi-Fredholm and $nul(\lambda I - T) < \infty$.

Thus now we have to consider the case when $\lambda \not \in \sigma(T_R)$. In this case, by Theorem 3.10 from~\cite{Ki} we have $R = E \cup F$ where $E$ and $F$ are disjoint $\varphi$-invariant closed subsets of $R$, $\sigma(T_E) \subset \{\xi \in \mathds{C}: \; |\xi| < |\lambda|\}$
and $\sigma(T_F) \subset \{\xi \in \mathds{C}: \; |\xi| > |\lambda|\}$. The definition of $R$ and elementary topological reasoning show that one of the sets $E$ or $F$ must be empty. If we assume that $F$ is empty we immediately come to a contradiction with the inequality $2(a)$ whence
$R = F$ and $3(a)$ is proved. Statement $3(b)$ can be proved in a similar way.
\end{proof}

Now we can state the first of our main results; before stating it let us notice that the case when the operator $\lambda I - T$ is semi-Fredholm and $nul(\lambda I - T) = 0$ (in other words when $\lambda \in \sigma_r(T)$) is completely described by Theorems~\ref{t1} and~\ref{t2}. We can therefore concentrate on the case when $nul(\lambda I - T) > 0$.

\begin{theorem} \label{t3} Let $T$ be an operator of form $(1)$ on $C(K)$ and let $\lambda \in \sigma(T) \setminus \{0\}$. The following conditions are equivalent.

(I) The operator $\lambda I - T$ is semi-Fredholm and $0 < nul(\lambda I - T) < \infty$.

(II) There is a finite subset $S = \{k_1, \ldots , k_m, s_1, \ldots s_l \}$ of $K$ with the properties

\begin{enumerate}
  \item Every point of $S$ is an isolated point in $K$.
  \item The points $k_i, i=1, \ldots m$, are not $\varphi$-periodic and if the sets $R_i$ and $L_i$ are defined as in the statement of Lemma~\ref{l3} then for each $i \in [1 : m]$ the conditions $3(a)$ and $3(b)$ are satisfied.
  \item The points $s_1, \ldots , s_l$ are $\varphi$-periodic and $\lambda^{p(i)} = w_{p(i)}(s_i)$ where $p(i)$ is the period of the point $s_i$.
  \item $nul(\lambda I - T) = m+l$.
  \item Let $U = \bigcup \limits_{j= - \infty}^\infty \varphi^{(j)}(S)$ and $V = K \setminus U$. Then either $\lambda \not \in \sigma(T_V)$ or
  $\lambda \in \sigma_r(T_V)$.
\end{enumerate}

\end{theorem}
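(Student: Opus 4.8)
The plan is to prove $(I)\Leftrightarrow(II)$ by treating the two implications separately; the substance is in $(I)\Rightarrow(II)$.

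\textbf{$(II)\Rightarrow(I)$.} Set $U=\bigcup_{j\in\mathds{Z}}\varphi^{(j)}(S)$ and $V=K\setminus U$. Since every point of $S$ is isolated in $K$ and $\varphi$ is a homeomorphism, $U$ is an open discrete $\varphi$-invariant set, hence $V$ is closed and $\varphi$-invariant. The set $J=\{f\in C(K):f|_V=0\}$ is a $T$-invariant closed ideal with $C(K)/J\cong C(V)$, and the inclusion and quotient maps intertwine $T$ with $T|_J$ and $T_V$. Because $S$ is finite, $J$ is a finite topological direct sum of the pieces $J_{k_i}$ and $J_{s_i}$ carried by the $\varphi$-orbits meeting $S$. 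Over a periodic orbit, $J_{s_i}\cong\mathds{C}^{p(i)}$ and $\lambda I-T|_{J_{s_i}}$ is $\lambda I$ minus a weighted cyclic shift whose kernel is one-dimensional precisely because $w_{p(i)}(s_i)=\lambda^{p(i)}$, so it is Fredholm with $nul=def=1$. Over a bi-infinite orbit, $J_{k_i}$ is a $C_0$-space and, by the dichotomy $3(a),3(b)$ together with the routine estimates already used in~\cite{Ki} and~\cite{Ki1}, $\lambda I-T|_{J_{k_i}}$ is Fredholm with $nul=1$ (the unique two-sidedly exponentially decaying solution of the eigenvalue recursion) and $def=0$. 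Hence $\lambda I-T|_J$ is Fredholm with $nul(\lambda I-T|_J)=m+l$. On the quotient, condition $(5)$ says exactly that $\lambda\notin\sigma_{ap}(T_V)$, i.e.\ $\lambda I-T_V$ is bounded below and so semi-Fredholm with $nul(\lambda I-T_V)=0$. A standard three-space argument for $0\to J\to C(K)\to C(V)\to 0$ then shows that $\lambda I-T$ is semi-Fredholm, and since $\ker(\lambda I-T_V)=0$ the kernel sequence collapses to $\ker(\lambda I-T)\cong\ker(\lambda I-T|_J)$; thus $0<nul(\lambda I-T)=m+l<\infty$.

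\textbf{$(I)\Rightarrow(II)$.} The idea is to peel off one $\varphi$-orbit at a time. Write $N=nul(\lambda I-T)$. At a typical stage we have a closed $\varphi$-invariant $K'\subseteq K$ with $\lambda I-T_{K'}$ semi-Fredholm and $0<nul(\lambda I-T_{K'})<\infty$, so $\lambda\in\sigma_{ap}(T_{K'})\setminus\{0\}$; Lemma~\ref{l1} supplies a point $k'\in K'$ satisfying $2(a),2(b)$, and Lemma~\ref{l2} forces $k'$ to be isolated in $K'$, which also rules out alternative $(II)$ of Lemma~\ref{l1} (the neighbourhood $\{k'\}$ of $k'$ in $K'$ contains no non-periodic point and no point of larger period). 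Thus $k'$ is either non-periodic, in which case Lemma~\ref{l3} yields $3(a),3(b)$ for the limit sets of its orbit, or $\varphi$-periodic of period $p$ with $w_p(k')=\lambda^p$. Let $O'$ be the orbit of $k'$ and $K''=K'\setminus O'$, again closed and $\varphi$-invariant since $O'$ is open and discrete in $K'$. If $O'$ is finite it is clopen in $K'$, so $C(K')=C(O')\oplus C(K'')$ splits $T_{K'}$ as a direct sum; if $O'$ is bi-infinite we use the exact sequence $0\to J'\to C(K')\to C(K'')\to 0$ with $J'=\{f:f|_{K''}=0\}$, on which $\lambda I-T|_{J'}$ is Fredholm of nullity $1$ and cokernel $0$ by $3(a),3(b)$. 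Either way $nul(\lambda I-T_{K''})=nul(\lambda I-T_{K'})-1$ and $\lambda I-T_{K''}$ is semi-Fredholm. Starting from $K'=K$ and iterating $N$ times produces orbits $O_1,\dots,O_N$ with base points $k_1,\dots,k_N$ and a terminal closed $\varphi$-invariant set $V=K\setminus(O_1\cup\cdots\cup O_N)$ with $\lambda I-T_V$ semi-Fredholm and $nul(\lambda I-T_V)=0$, hence bounded below; so condition $(5)$ holds (the existence of such a terminal $V$ is also the content of Corollary~\ref{c1}). Separating the base points into the $m$ non-periodic and the $l$ periodic ones, $m+l=N$, and putting $S=\{k_1,\dots,k_m,s_1,\dots,s_l\}$, conditions $(2)$--$(4)$ hold by construction.

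It remains to verify condition $(1)$, that every base point $k_j$ produced at the $j$-th stage is isolated in $K$ itself and not merely in $K_j=K\setminus(O_1\cup\cdots\cup O_{j-1})$; this is the main obstacle. If $k_j$ were not isolated in $K$ then, since $K\setminus K_j$ consists of isolated points of $K$, the point $k_j$ would be a limit point of some earlier orbit $O_i$ with $i<j$, necessarily a bi-infinite one, so $k_j\in R_i\cup L_i$ where $R_i,L_i$ are the $\varphi$-invariant forward and backward limit sets of $O_i$. By Lemma~\ref{l3} applied at stage $i$ we have $\sigma(T_{R_i})\subseteq\{|\xi|>|\lambda|\}$ and $\sigma(T_{L_i})\subseteq\{|\xi|<|\lambda|\}$, and the spectral-radius identities $\min\{|z|:z\in\sigma(T_{R_i})\}=\lim_n(\min_{x\in R_i}|w_n(x)|)^{1/n}$ and $r(T_{L_i})=\lim_n(\max_{x\in L_i}|w_n(x)|)^{1/n}$ then contradict inequality $2(b)$ at $k_j$ when $k_j\in R_i$, and inequality $2(a)$ at $k_j$ when $k_j\in L_i$. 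This is exactly the place where the precise form of the conclusions $3(a),3(b)$ of Lemma~\ref{l3} is needed; the surrounding direct-sum and three-space bookkeeping, which shows that each peeling lowers the nullity by exactly one and preserves semi-Fredholmness, is routine.
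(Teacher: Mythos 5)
Your proof is correct in substance, but the direction $(II)\Rightarrow(I)$ takes a genuinely different route from the paper. The paper proves $(II)\Rightarrow(I)$ analytically, via the criterion of \cite[Corollary I.4.7]{EE}: it takes an arbitrary normalized sequence $u_n$ with $Tu_n-\lambda u_n\to 0$, uses condition $(5)$ to kill the mass on $V$, and then exhibits the explicit limit function $u$ (with $u(\varphi^{(i)}(k))=\lambda^i/w_i(k)$ forward and $w_i(\varphi^{(-i)}(k))/\lambda^i$ backward) and shows by a mass-escape argument that a subsequence converges to it uniformly. You instead argue structurally through the exact sequence $0\to J\to C(K)\to C(V)\to 0$, identifying $J$ with a finite $c_0$-sum over the orbits and computing $nul$ and $def$ of $\lambda I-T$ on each summand before assembling the result by a three-space argument. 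Your route buys more: it yields the defect of each orbit piece and hence index information that the paper's compactness argument does not directly produce. The price is that the key claim --- that over a bi-infinite orbit satisfying $3(a)$ and $3(b)$ the restriction is Fredholm with $nul=1$ and $def=0$ --- is exactly the nontrivial analytic content, and you dispose of it as ``routine estimates''; it does hold (split $c_0(\mathds{Z})$ at the base point into two half-orbits, note the operator is block triangular with a rank-one coupling, that $3(b)$ makes the backward block invertible since its spectral radius is below $|\lambda|$, and that $3(a)$ makes the forward block surjective with one-dimensional kernel), but this computation should be written out since it is where conditions $3(a)$, $3(b)$ are actually consumed. For $(I)\Rightarrow(II)$ your peeling argument matches what the paper compresses into the citation of Lemma~\ref{l2}, Corollary~\ref{c1}, and Lemma~\ref{l3}; your spectral-radius argument for why each base point is isolated in $K$ itself (not merely in the pared-down space) is valid, though it can be shortened: the point $k_j$ satisfies $2(a)$, $2(b)$ and the trichotomy of Lemma~\ref{l1} as a point of $K$, so Lemma~\ref{l2} applied to $T$ on $C(K)$ already forces it to be isolated in $K$.
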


\begin{proof} The implication $(I) \Rightarrow (II)$ follows from Lemma~\ref{l2}, Corollary~\ref{c1}, and Lemma~\ref{l3}.
To prove the implication $(II) \Rightarrow (I)$ notice that by~\cite[Corollary I.4.7, page 43]{EE} it is enough to prove that every sequence
$u_n, n \in \mathds{N}, u_n \in C(K)$ such that  $\|u_n\| = 1$ and $\|Tu_n - \lambda u_n \| \mathop \rightarrow \limits_{n \to \infty} 0$  contains a norm convergent subsequence. Condition $(5)$ guarantees that $\|u_n\|_{C(V)} \mathop \rightarrow \limits_{n \to \infty} 0$. Therefore we can and will assume that $u_n \equiv 0$ on $V$, $n \in \mathds{N}$.
 Next, because the set $S_2 = \{s_1, \ldots , s_l\}$ is a finite and clopen subset of $K$ there is a subsequence of the sequence $u_n$ that converges uniformly on $S_2$ and we can assume without loss of generality that $u_n$ converges uniformly on $S_2$. Thus it remains to prove that if $k \in S$ is not $\varphi$-periodic then we can find a subsequence of $u_n$ that converges uniformly on the set $A = cl\{\varphi^{(i)}(k), i \in \mathds{Z}\}$. If $\|u_n\|_{C(A)} \mathop \rightarrow \limits_{n \to \infty} 0$ then it is nothing to prove. Therefore we can assume without loss of generality that $\|u_n\|_{C(A)} = 1$.  Let $a_n$ be such a point in $A$ that $|u_n(a_n)| = 1$ and let $a$ be a limit point of the set $\{a_n\}$. The point $a$ cannot belong to the set $V$. Indeed at that point we have inequalities $2(a)$ and $2(b)$. On the other hand if $a \in V$ then $a \in cl A \setminus A$ in contradiction with condition $(2)$. Thus $a \in \{\varphi^{(i)}(k), i \in \mathds{Z} \}$ and we can assume without loss of generality that $a = k$ and that the sequence $u_n(a)$ converges to $1$. We define the function $u$ on $K$ in the following way

 \begin{equation*}
u(x) =
\begin{cases} 1     , & \text{if $x = k$,}
\\
 \lambda^i/w_i(k) , & \text{if $x = \varphi^{(i)}(k), i \in \mathds{N}$,}
\\
w_i(\varphi^{(-i)}(k)/\lambda^i , & \text{if $x = \varphi^{(-i)}(k), i \in \mathds{N}$,}
\\
 0, &\text{ otherwise.}
\end{cases}
\end{equation*}
Condition $(2)$ guarantees that $u \in C(K)$. Clearly the sequence $u_n$ converges to $u$ pointwise on $K$ and we will prove now that some subsequence of it converges to $u$ uniformly. Indeed, otherwise we can find a positive constant $c$ and a strictly increasing sequence of positive integers $i(n)$ such that either $|u_n(\varphi^{(i(n))})(k)| \geq c$ or $|u_n(\varphi^{(-i(n))})(k)| \geq c$. We will assume the first case because the second one can be considered absolutely similarly. Keeping in mind that $u_n \equiv 0$ on $V$ and that $u_n$ converge to $0$ pointwise on $K$ we can find another strictly increasing sequence of integers $j(n)$ such that $i(n) - j(n) \mathop \rightarrow \limits_{n \to \infty} \infty$ and $|u_n(\varphi^{(i(n)\pm j(n))})| \leq c/n$. Let $v_n$ be the restriction of $u_n$ on the set $\{\varphi^{(d)}(k),
d \in [i(n) - j(n) : i(n) + j(n)]\}$. Then $\|v_n\| \geq c$ and $\|Tv_n - \lambda v_n \| \mathop \rightarrow \limits_{n \to \infty} 0$. Let $r_n$ be a point in $K$ where $|v_n|$ takes its maximum value and $r$ be a limit point of the sequence $r_n$. Then $r \in cl A \setminus A$ and at point $r$ we have inequalities $2(a)$ and $2(b)$ in contradiction with condition $(2)$ of the current theorem.

\end{proof}

Our next goal is to describe when the operator $\lambda I - T$, $\lambda \in \mathds{C} \setminus \{0\}$, is semi-Fredholm and $def(\lambda I - T) < \infty$. We start with the following remark.

 \begin{remark} \label{r1} The case when $def(\lambda I - T) = 0$ was discussed in Theorems~\ref{t1} and~\ref{t2}, and therefore we will assume that
$0 < def(\lambda I - T) < \infty$. In this case the operator $\lambda I - T^\prime$ is semi-Fredholm and $nul(\lambda I - T^\prime) = def(\lambda I - T)$. (See e.g.~\cite[Theorem I.3.7, page 29]{EE}).  Consider now an auxiliary operator
$$(\tilde{T}f)(x) = w(x)f(\varphi^{(-1)}(x)), \; f \in C(K), \; x \in K. \eqno{(\bigstar)}$$
Then (see~\cite{Ki} and~\cite{Ki1}) $\sigma(\tilde{T}) = \sigma(T)$ whence $\lambda \in \sigma(\tilde{T})$. Assume that $\lambda \in \sigma_{ap}(\tilde{T})$. Then (see~\cite{Ki1}) there is a point $k \in K$ such that for every $n \in \mathds{N}$
$$|w_n(k)| \leq |\lambda|^n. \eqno{4(a)}$$
and
$$|w_n(\varphi^{(-n)}(k))| \geq |\lambda|^n.\eqno{4(b)}$$
 Notice now that $k$ must be an isolated point in $K$. Indeed, otherwise similar to the proof of Lemma~\ref{l1} we can construct a sequence of Borel regular measures $\mu_n$ on $K$ (actually every $\mu_n$ is a finite linear combination of Dirac measures) such that the measures $\mu_n$ are pairwise disjoint in $C(K)^\prime$ and $\|T^\prime \mu_n - \lambda \mu_n \| = o(\|\mu_n\|), n \rightarrow \infty$ in contradiction with our assumption that $\lambda I - T^\prime$ is semi-Fredholm and $nul(\lambda I - T^\prime) < \infty$.

\end{remark}

\begin{corollary} \label{c2} Let $T$ be an operator of form $(1)$ on $C(K)$ and let $\lambda \in \mathds{C} \setminus \{0\}$ be such that the operator $\lambda I - T$ is semi-Fredholm and $0 < def(\lambda I - T) < \infty$. Then there is a finite set $S = \{k_1, \ldots , k_m, s_1, \ldots , s_l\}$ of points isolated in $K$  such that the points $k_i, i = 1, \ldots , m$ are not $\varphi$-periodic and satisfy conditions $4(a)$ and $4(b)$. Points $s_1, \ldots , s_l$ are $\varphi$-periodic and $\lambda^{p(i)} = w_i(s_i), i = 1, \ldots s$, where $p(i)$ is the period of the point $s_i$. Moreover, if $F = K \setminus \bigcup \limits_{i= - \infty}^\infty \varphi^{(i)}(S)$, then the operator $\lambda I - T_F$ is semi-Fredholm on $C(F)$ and $ def(\lambda I - T_F) = 0 $.

\end{corollary}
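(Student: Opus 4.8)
The plan is to mirror the chain of arguments that produced Corollary \ref{c1} from Lemmas \ref{l2} and \ref{l3}, but applied to the conjugate operator $T'$, using the observation in Remark \ref{r1} that $0 < \mathrm{def}(\lambda I - T) < \infty$ is equivalent to $\lambda I - T'$ being semi-Fredholm with $0 < \mathrm{nul}(\lambda I - T') = \mathrm{def}(\lambda I - T) < \infty$. First I would invoke the auxiliary operator $\tilde T$ of $(\bigstar)$: since $\sigma(\tilde T) = \sigma(T)$, we have $\lambda \in \sigma(\tilde T)$, and I would split into the cases $\lambda \in \sigma_{ap}(\tilde T)$ and $\lambda \in \sigma_r(\tilde T)$. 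In the approximate-point case, Remark \ref{r1} already supplies a point $k$ satisfying $4(a)$ and $4(b)$ and shows that $k$ must be isolated in $K$ (otherwise one builds pairwise disjoint measures $\mu_n$ in $C(K)'$ with $\|T'\mu_n - \lambda\mu_n\| = o(\|\mu_n\|)$, contradicting semi-Fredholmness of $\lambda I - T'$ with finite nullity). Exactly as in Lemma \ref{l1} one sees that $k$ is either non-$\varphi$-periodic, or $\varphi$-periodic with $w_{p}(k) = \lambda^{p}$ for its period $p$ (conditions (II) and (III) of Lemma \ref{l1} collapse to (III) once $k$ is isolated).

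Next I would extract the finite set $S$. Because $\mathrm{nul}(\lambda I - T') < \infty$, only finitely many pairwise-$\varphi$-orbit-disjoint such points $k$ can occur: if there were infinitely many, their associated eigen-type measures for $\lambda I - T'$ (for periodic points) or the analogues of the vectors $v_n$ from the proof of Lemma \ref{l2} (for non-periodic points, transplanted to the measure setting) would be pairwise disjoint with $\|T'(\cdot) - \lambda(\cdot)\| = o(\|\cdot\|)$, again contradicting finiteness of the nullity. This gives $S = \{k_1,\dots,k_m,s_1,\dots,s_l\}$ with the $k_i$ non-periodic and satisfying $4(a)$–$4(b)$, and the $s_j$ periodic with $\lambda^{p(j)} = w_{p(j)}(s_j)$. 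Then, with $F = K \setminus \bigcup_{i\in\mathds{Z}} \varphi^{(i)}(S)$, I would argue that $\lambda \notin \sigma_{ap}(\tilde T_F)$: any approximate eigenvector would produce, via Remark \ref{r1} applied on $F$, a point in $F$ with properties $4(a)$–$4(b)$ lying in the closure of an orbit, which (by the non-isolation / orbit-closure analysis exactly as in Lemma \ref{l3}) contradicts the isolation conclusion just obtained. Hence on $C(F)$ either $\lambda \notin \sigma(T_F)$ or $\lambda \in \sigma_r(T_F')$; in the latter case Theorem \ref{t2} applies, and one checks $\lambda\Gamma \cap \sigma(T_{\mathrm{cl}\,P}) = \emptyset$ (no periodic point $s$ with $|w_{p}(s)| = |\lambda|^p$ survives in $F$), so that after removing the clopen periodic part $P$ the operator is genuinely right-invertible, i.e. $\mathrm{def}(\lambda I - T_F) = 0$ on $C(F \setminus P)$; absorbing $P$ back (on which $\lambda I - T_F$ is invertible) gives $\mathrm{def}(\lambda I - T_F) = 0$. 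The case $\lambda \in \sigma_r(\tilde T)$ is handled directly by Theorem \ref{t2} applied to $\tilde T$, which forces the periodic open set $P$ to be empty (else $\mathrm{def} = 0$ already on all of $K$, contradicting $\mathrm{def}>0$ unless $P \ne \emptyset$ accounts for it — and the finite clopen $P$ is then exactly the periodic part of $S$), yielding the same structural decomposition.

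The main obstacle I expect is the bookkeeping in the last step: ensuring that after deleting the full orbits of $S$ the residual operator $\lambda I - T_F$ has deficiency \emph{exactly} zero rather than merely finite. This requires showing that each isolated point in $S$ contributes precisely one dimension to $\mathrm{def}(\lambda I - T)$ — a "Jordan chain has length one" type statement — and that these contributions are independent across distinct orbits; the natural tool is an index/duality computation (the dimension of $(X/(\lambda I - T)X)$ restricted to the clopen orbit closure of a single isolated $k_i$ or $s_j$), analogous to the counting $\mathrm{nul}(\lambda I - T) = m + l$ in Theorem \ref{t3}(4), but now on the conjugate side where one must be careful that weak-$*$ phenomena in $C(K)'$ do not inflate the count. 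A secondary technical point is transplanting the disjoint-sequence constructions of Lemmas \ref{l2}–\ref{l3} from $C''(K)$ to $C(K)'$; this is routine (finite linear combinations of Dirac measures play the role of characteristic functions of singletons) but must be done with the inequalities $4(a)$–$4(b)$ in the roles previously played by $2(a)$–$2(b)$.
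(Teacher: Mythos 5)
Your proposal is correct and follows essentially the same route as the paper, which simply derives the corollary from Remark~\ref{r1} together with Theorems~\ref{t1} and~\ref{t2}; you have merely written out the details (the dual disjoint-measure constructions and the removal of finitely many orbits) that the paper treats as immediate. Your closing worry about each point of $S$ contributing exactly one dimension to the deficiency is not actually needed here, since the corollary asserts only $def(\lambda I - T_F)=0$ and not a count of $def(\lambda I - T)$ (that count appears later, in Theorem~\ref{t4}).
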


\begin{proof} The proof follows immediately from Remark~\ref{r1} and Theorems~\ref{t1} and~\ref{t2}.

\end{proof}

\begin{lemma} \label{l4} Assume conditions of Corollary~\ref{c2} and let $S$ be the set from the statement of that corollary. Assume also that
$k \in S$ is not $\varphi$-periodic. Like in Lemma~\ref{l3} let $R = \bigcap \limits_{i=1}^\infty \{\varphi^{(n)}(k), n \geq i \}$ and $L = \bigcap \limits_{i=1}^\infty \{\varphi^{(-n)}(k), n \geq i \}$.

Then
$$\sigma(T_R) \subset \{\xi \in \mathds{C}: \; |\xi| < |\lambda|\}  \eqno{5(a)}$$
and
$$\sigma(T_L) \subset \{\xi \in \mathds{C}: \; |\xi| > |\lambda|\}.   \eqno{5(b)}$$

\end{lemma}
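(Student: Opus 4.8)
The statement is the mirror image of Lemma~\ref{l3}: the inequalities $4(a)$, $4(b)$ are $2(a)$, $2(b)$ read with ``forward'' and ``backward'' interchanged, and $5(a)$, $5(b)$ are $3(a)$, $3(b)$ with the sets $R$, $L$ and the two regions $\{|\xi|<|\lambda|\}$, $\{|\xi|>|\lambda|\}$ interchanged. So the plan is to transcribe the proof of Lemma~\ref{l3}, carried out on the dual side. Concretely I would pass to the auxiliary operator $\tilde T$ of $(\bigstar)$, use that $\sigma(\tilde T_U)=\sigma(T_U)$ for every closed $\varphi$-invariant $U\subseteq K$ (in particular for $U=R$ and $U=L$), and observe that, relative to $\tilde T$ --- the weighted composition induced by $\varphi^{(-1)}$ --- the set $L$ is the forward limit set of $k$ and $R$ its backward limit set, while $4(a)$, $4(b)$ say exactly that $k$ is, for $\tilde T$, a point of the kind produced by Lemma~\ref{l1}, with the analogues of $2(a)$, $2(b)$ holding. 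Thus $5(a)$ for $T$ becomes the analogue of $3(b)$ for $\tilde T$, and $5(b)$ for $T$ becomes the analogue of $3(a)$ for $\tilde T$.

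The one genuine change when running the argument is that the hypothesis ``$\lambda I-T$ is semi-Fredholm and $nul(\lambda I-T)<\infty$'' that drove Lemma~\ref{l3} is now replaced, by Remark~\ref{r1}, by ``$\lambda I-T^\prime$ is semi-Fredholm and $nul(\lambda I-T^\prime)=def(\lambda I-T)<\infty$''. Consequently, wherever the proof of Lemma~\ref{l3} produced a contradiction by exhibiting a disjoint sequence of approximate eigenvectors of $T^{\prime\prime}$ in $C^{\prime\prime}(K)$ built from characteristic functions of singletons, one now exhibits a disjoint sequence of approximate eigenvectors $\mu_n$ of $T^\prime$ in $C(K)^\prime$ --- finite linear combinations of Dirac measures along the $\varphi$-orbit of $k$, with the same geometric factors $\lambda^{-j}$ and dampening factors $(1-1/\sqrt{m})^{|j-m|}$; the estimate $\|T^\prime\mu_n-\lambda\mu_n\|=o(\|\mu_n\|)$ is the routine computation of \cite{Ki}, \cite{Ki1}, and the contradiction is \cite[Corollary I.4.7]{EE} applied to $T^\prime$. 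With this substitution the three-case analysis of Lemma~\ref{l3} carries over: one first checks $\lambda\notin\sigma_{ap}(\tilde T_R)$ and $\lambda\notin\sigma_{ap}(\tilde T_L)$ (a witnessing point lies in $R$, respectively $L$, hence is non-isolated in $K$, which contradicts the argument of Remark~\ref{r1}, the dual of Lemma~\ref{l2}); the case $\lambda\in\sigma_r(\tilde T_R)$ or $\lambda\in\sigma_r(\tilde T_L)$ is excluded exactly as the corresponding case of Lemma~\ref{l3}, with the disjoint approximate eigenvectors there replaced by the disjoint sequences of measures just described; what then remains is $\lambda\notin\sigma(\tilde T_R)=\sigma(T_R)$ and $\lambda\notin\sigma(\tilde T_L)=\sigma(T_L)$, so by Theorem~3.10 of \cite{Ki} each of $R$, $L$ splits into an ``inner'' piece with spectrum in $\{|\xi|<|\lambda|\}$ and an ``outer'' piece with spectrum in $\{|\xi|>|\lambda|\}$, one piece of each being empty by the topological reasoning of Lemma~\ref{l3}, and $4(a)$, $4(b)$ force $R$ to be the inner piece and $L$ the outer one, i.e.\ $5(a)$, $5(b)$.

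The step I expect to be the main obstacle is this last identification, and in particular its $R$-half. For $L$ it is immediate: $4(b)$ gives $|w_n(\varphi^{(-n)}(k))|\ge|\lambda|^n$ with every relevant backward weight nonzero, so if $L$ were the inner piece, then picking $N$ so large that $|w_N|\le(|\lambda|-\varepsilon)^N$ on a neighbourhood of $L$ and iterating $N$ steps at a time forces $(|\lambda|/(|\lambda|-\varepsilon))^{jN}$ to stay bounded in $j$ --- a contradiction. For $R$, however, $4(a)$ only supplies the one-sided bound $|w_n(k)|\le|\lambda|^n$, which is consistent with $w_m(k)=0$ for all large $m$; in that degenerate situation the iteration argument collapses and one must exclude it separately. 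I would do this by going back to the construction of $k$ in Remark~\ref{r1}: a point $k$ with $w_m(k)=0$ for some $m$ generates no eigenfunctional of $T^\prime$ at $\lambda$ and so may be dropped from the set $S$ of Corollary~\ref{c2}; hence we may assume $w_m(k)\ne0$ for every $m$, after which the ``neighbourhood-and-iterate'' estimate forcing $\sigma(T_R)\subset\{|\xi|<|\lambda|\}$ goes through verbatim. Everything else is a routine dualization of the proof of Lemma~\ref{l3}.
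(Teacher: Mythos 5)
Your proposal follows exactly the paper's route: the paper's entire proof of Lemma~\ref{l4} consists of noting that Remark~\ref{r1} yields $\lambda \not\in \sigma_{ap}(\tilde{T}_{R \cup L})$ for the auxiliary operator $\tilde{T}$ of $(\bigstar)$ and then declaring that ``the rest of the proof goes very similar to the proof of Lemma~\ref{l3} and can be omitted,'' which is precisely the dualization you carry out, with disjoint sequences of point measures in $C(K)^\prime$ playing the role of the disjoint elements of $C^{\prime\prime}(K)$. Your extra attention to the degenerate possibility $w_m(k)=0$ (which inequality $4(a)$ alone does not rule out) addresses a point the paper passes over in silence, so your write-up is, if anything, more complete than the omitted original.
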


\begin{proof} It follows from Remark~\ref{r1} that $\lambda \not \in \sigma_{ap}(\tilde{T}_{R \cup L})$ where $\tilde{T}$ is defined
 by the equation $(\bigstar)$. The rest of the proof goes very similar to the proof of Lemma~\ref{l3} and can be omitted.

\end{proof}

\begin{theorem} \label{t4} Let $T$ be an operator of form $(1)$ on $C(K)$ and let $\lambda \in \sigma(T) \setminus \{0\}$. The following conditions are equivalent.

(I) The operator $\lambda I - T$ is semi-Fredholm and $0 < def(\lambda I - T) < \infty$.

(II) There is a finite subset $S = \{k_1, \ldots , k_m, s_1, \ldots s_l \}$ of $K$ with the properties

\begin{enumerate}
  \item Every point of $S$ is an isolated point in $K$.
  \item The points $k_i, i=1, \ldots m$, are not $\varphi$-periodic and if the sets $R_i$ and $L_i$ are defined as in the statement of Lemma~\ref{l3} then for each $i \in [1 : m]$ the conditions $5(a)$ and $5(b)$ are satisfied.
  \item The points $s_1, \ldots , s_l$ are $\varphi$-periodic and $\lambda^{p(i)} = w_{p(i)}(s_i)$ where $p(i)$ is the period of the point $s_i$.
  \item $def(\lambda I - T) = m+l$.
  \item Let $U = \bigcup \limits_{j= - \infty}^\infty \varphi^{(j)}(S)$ and $V = K \setminus U$. Then either $\lambda \not \in \sigma(T_V)$ or
  $\lambda \in \sigma_r(T_V^\prime)$.
\end{enumerate}

\end{theorem}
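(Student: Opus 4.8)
The plan is to reduce Theorem~\ref{t4} to Theorem~\ref{t3} applied to the auxiliary operator $\tilde T$ of Remark~\ref{r1}, $(\tilde Tf)(x)=w(x)f(\varphi^{(-1)}(x))$. This $\tilde T$ is again of form $(1)$ (same weight, inverse homeomorphism) and $\sigma(\tilde T)=\sigma(T)$, so $\lambda\in\sigma(\tilde T)\setminus\{0\}$ and Theorem~\ref{t3} applies to it. The reduction rests on two facts. First, by~\cite[Theorem I.3.7]{EE}, $\lambda I-T$ is semi-Fredholm with $0<def(\lambda I-T)<\infty$ if and only if $\lambda I-T'$ is semi-Fredholm with $0<nul(\lambda I-T')=def(\lambda I-T)<\infty$. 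Second, the semi-Fredholm behaviour of $\lambda I-T'$ on $C(K)'$ coincides with that of $\lambda I-\tilde T$ on $C(K)$, with $nul(\lambda I-\tilde T)=nul(\lambda I-T')$; this is the correspondence through which $\tilde T$ is used in~\cite{Ki},~\cite{Ki1} and in Remark~\ref{r1} (where the point produced from $\lambda\in\sigma_{ap}(\tilde T)$ is converted into approximate eigenmeasures of $T'$), and it is exactly what makes Remark~\ref{r1}, Corollary~\ref{c2}, Lemma~\ref{l4} the $\tilde T$-analogues of Lemma~\ref{l2}, Corollary~\ref{c1}, Lemma~\ref{l3}. Chaining the two facts, condition $(I)$ of Theorem~\ref{t4} for $T$ at $\lambda$ is equivalent to condition $(I)$ of Theorem~\ref{t3} for $\tilde T$ at $\lambda$, so it remains to show that the two versions of condition $(II)$ are equivalent as well.

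For that I would translate clause by clause, using repeatedly that $\sigma(\tilde T_W)=\sigma(T_W)$ for every closed $\varphi$-invariant $W\subseteq K$. The finite set $S$ of points isolated in $K$ is the same, as is its splitting into non-periodic $k_i$ and periodic $s_j$, since a point is $\varphi$-periodic exactly when it is $\varphi^{(-1)}$-periodic, of the same period. The forward and backward limit sets $R_i,L_i$ of $k_i$ interchange when $\varphi$ is replaced by $\varphi^{(-1)}$, so clauses $5(a),5(b)$ for $T$ are precisely clauses $3(a),3(b)$ for $\tilde T$ with the two inclusions swapped between $R_i$ and $L_i$. On a periodic orbit the product of the weights around it is the same for $T$ and for $\tilde T$ (in both cases it equals $w_{p(i)}(s_i)$), so the requirement $\lambda^{p(i)}=w_{p(i)}(s_i)$ is unchanged. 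Finally, $V=K\setminus\bigcup_j\varphi^{(j)}(S)$ is the same set for $T$ and $\tilde T$, and applying Theorems~\ref{t1},~\ref{t2} to $T_V$ and to $\tilde T_V$ --- noting that condition $(C)$ for $\varphi^{(-1)}$ is condition $(B)$ for $\varphi$ --- gives $\lambda\in\sigma_r(T_V')$ if and only if $\lambda\in\sigma_r(\tilde T_V)$, so clause $(5)$ is preserved; and the count $nul(\lambda I-\tilde T)=m+l$ of Theorem~\ref{t3}$(II)$ transcribes, via the second fact, into $def(\lambda I-T)=m+l$.

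The step I expect to be the main obstacle is the second fact above --- extracting from~\cite{Ki},~\cite{Ki1}, in exactly the form needed, the statement that the semi-Fredholm data of $\lambda I-\tilde T$ and of $\lambda I-T'$ coincide --- together with the bookkeeping of the translation: keeping straight which of conditions $(B)/(C)$, which roles of $E$ and $F$, and which modulus inequalities reverse under $T\mapsto\tilde T$, so that nothing is flipped the wrong way. If that fact is inconvenient to cite verbatim, the alternative is to prove $(II)\Rightarrow(I)$ directly by repeating on $C(K)'$ the argument from the proof of Theorem~\ref{t3}: show that every sequence of unit measures $\mu_n$ with $(\lambda I-T')\mu_n\to 0$ has a total-variation convergent subsequence, using clause $(5)$ --- i.e.\ $\lambda I-T_V'$ bounded below --- to discard the mass carried by $V$, so that, as in Theorem~\ref{t3}, no mass escapes to the limit points of the orbits (which lie in $V$); using the finiteness and clopenness of $\{s_1,\dots,s_l\}$ for the periodic part; and using the decay forced by $5(a),5(b)$ to identify, on each non-periodic orbit, the limit as a scalar multiple of the unique summable $T'$-eigenmeasure carried there.
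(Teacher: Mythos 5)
Your primary route --- reducing Theorem~\ref{t4} to Theorem~\ref{t3} applied to $\tilde T$ --- hinges on what you call the ``second fact'': that $\lambda I-\tilde T$ acting on $C(K)$ and $\lambda I-T'$ acting on $C(K)'$ have the same semi-Fredholm data, with $nul(\lambda I-\tilde T)=nul(\lambda I-T')$. That is a genuine gap, and you are right to flag it as the main obstacle: it is not available in~\cite{Ki} or~\cite{Ki1} in any citable form, and it is not a formal consequence of $\sigma(\tilde T)=\sigma(T)$. The two operators live on very different spaces ($C(K)'$ is an AL-space, $C(K)$ an AM-space); on a single non-periodic orbit an eigenvector of $\tilde T$ is a continuous function that must \emph{tend to zero} along the orbit, while an eigenvector of $T'$ is a \emph{summable} family of point masses, and closedness of the range is a further separate issue. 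These happen to agree under the geometric decay forced by $5(a)$, $5(b)$, but that is precisely what has to be proved; as a black box the ``second fact,'' restricted to the relevant $\lambda$, is essentially the statement of the theorem, so the reduction is circular.

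The paper does not attempt this transfer. For $(I)\Rightarrow(II)$ it simply invokes Remark~\ref{r1}, Corollary~\ref{c2}, and Lemma~\ref{l4} --- the $\tilde T$-analogues you correctly identify, which are already proved in exactly the pointwise form needed (isolated points satisfying $4(a)$, $4(b)$, $5(a)$, $5(b)$), so no wholesale identification of nullities is required. For $(II)\Rightarrow(I)$ it does exactly your fallback: take unit measures $\mu_n$ with $T'\mu_n-\lambda\mu_n\to0$, use $II(5)$ to confine the mass to the orbit of a single $k_i$, apply the $\ell^1$ compactness criterion, and rule out escape of mass by passing to $|\nu_s|$ (using that $T'$ preserves disjointness) and extracting a weak-$\star$ limit $\tau$ with $|T'|\tau=|\lambda|\tau$ charging $cl\,Tr(k)\setminus Tr(k)$, which via Theorems~\ref{t1} and~\ref{t2} produces a non-isolated point satisfying $5(a)$ and $5(b)$ and hence a contradiction. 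So your alternative sketch is the paper's actual argument and is sound at the level of detail given; your preferred reduction, as stated, is not.
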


\begin{proof} The implication $(I) \Rightarrow (II)$ follows from Remark~\ref{r1}, Corollary~\ref{c2}, and Lemma~\ref{l4}.

To prove the implication $(II) \Rightarrow (I)$ we have to prove that if $(II)$ is satisfied and $\mu_n$ is a sequence of regular Borel measures on $K$ such that $\|\mu_n\|=1$ and $T^\prime \mu_n - \lambda \mu_n \mathop \rightarrow \limits_{n \to \infty} 0$ then the sequence $\mu_n$ contains a norm convergent subsequence. It follows from $II(5)$ that without loss of generality we can assume that there is $k \in [k_1, \ldots , k_m]$ such that $supp \, \mu_n \subset \{\varphi^i(k) : \; i \in \mathds{Z} \}, n \in \mathds{N}$. By the well known criterion of compactness in $l^1$ (see e.g.~\cite{DS}) it is enough to prove that for any positive $\varepsilon$ there is an $m = m(\epsilon) \in \mathds{N}$ such that
$|\mu_n|(\{\varphi^{(i)}(k), \; |i| > m \}) < \varepsilon , n \in \mathds{N} $. Assume to the contrary that there is a positive $\varepsilon$ and a subsequence $\nu_s = \mu_{n_s}$ of the sequence $\mu_n$ such that for any $l \in \mathds{N}$ we have $|\nu_s|(\{\varphi^{(i)}(k) : |i| > l \}) \geq \varepsilon , s \in \mathds{N}$. The operator $T$ is the product of a central operator and a $d$-isomorphism and therefore the conjugate operator $T^\prime$ preserves disjointness. Thus $|T^\prime||\nu_s| - |\lambda||\nu_s| \mathop \rightarrow \limits_{s \to \infty} 0$. Let $\tau$ be a limit point of the sequence $|\nu_s|$ in the weak-$\star$ topology on $C(K)^\prime$. Then $\tau$ is a probability measure on $K$ and
$|T^\prime| \tau = |\lambda| \tau$. Let $Tr(k) = \{\varphi^{(i)}(k), i \in \mathds{Z} \}$. Then clearly $\tau(cl Tr(k) \setminus Tr(k)) \geq \varepsilon$. Therefore $|\lambda| \in \sigma_{ap}(T^\prime , C( cl Tr(k) \setminus Tr(k))$. It follows from Theorems~\ref{t1} and~\ref{t2} that there is a point $l \in cl Tr(k) \setminus Tr(k)$ such that at point $l$ we have the inequalities $5(a)$ and $5(b)$. Because $l$ is a limit point of the set $Tr(k)$ we easily conclude that $\lambda \in \sigma_{ap}(T^\prime)$, a contradiction.

\end{proof}

Now we can answer the question when operator $\lambda I - T$ ($\lambda \neq 0$) is Fredholm. The next theorem follows directly from Theorems~\ref{t3} and~\ref{t4}.

\begin{theorem} \label{t5} Let $T$ be an operator of form $(1)$ on $C(K)$ and $\lambda \in \sigma(T) \setminus \{0\}$. The following conditions are equivalent.

$(I)$ The operator $\lambda I - T$ is Fredholm.

$(II)$ There is a finite subset $S = \{k_1, \ldots, k_m, l_1, \ldots , l_n, s_1, \ldots , s_q \}$ of $K$ such that

\begin{enumerate}[(a)]
      \item Every point $k_i, i \in [1, \ldots , m]$ is not $\varphi$-periodic and satisfies conditions $2(a), 2(b), 3(a)$, and $3(b)$.
      \item Every point $l_i, i \in [1, \ldots , n]$ is not $\varphi$-periodic and satisfies conditions $4(a), 4(b), 5(a)$, and $5(b)$.
      \item Every point $s_i, i \in [1, \ldots , q]$ is $\varphi$-periodic and $\lambda^{p(i)} = w_{p(i)}$ where $p(i)$ is the period of the point $s_i$.
      \item Let $V = \bigcup \limits_{i=-\infty}^\infty \varphi^{(i)}(S)$. Then $\lambda \not \in \sigma(T_{K \setminus V})$.

    \end{enumerate}

    Moreover, if condition $(II)$ is satisfied then $ind(\lambda I - T) = n - m$.

\end{theorem}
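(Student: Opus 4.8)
The plan is to obtain the statement by assembling Theorems~\ref{t3} and~\ref{t4}, with Theorems~\ref{t1} and~\ref{t2} covering the two degenerate cases. Recall that $\lambda I-T$ is Fredholm exactly when it is semi-Fredholm with $nul(\lambda I-T)<\infty$ and $def(\lambda I-T)<\infty$. Assuming $(I)$, I would first apply Theorem~\ref{t3} when $nul(\lambda I-T)>0$, and Theorems~\ref{t1}--\ref{t2} when $nul(\lambda I-T)=0$ (i.e. $\lambda\in\sigma_r(T)$), to produce a finite set $S_1$: its non-periodic points are the $k_i$ of item $(a)$, satisfying $2(a),2(b)$ by Lemma~\ref{l1} and $3(a),3(b)$ by Lemma~\ref{l3}, and its $\varphi$-periodic points $s$ satisfy $\lambda^{p}=w_{p}(s)$, $p$ being the period of $s$. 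Symmetrically, from Theorem~\ref{t4} when $def(\lambda I-T)>0$, and Theorems~\ref{t1}--\ref{t2} when $def(\lambda I-T)=0$, I would produce a finite set $S_2$ whose non-periodic points are the $l_j$ of item $(b)$, satisfying $4(a),4(b),5(a),5(b)$. Since $\lambda\in\sigma(T)$, at least one of $S_1,S_2$ is nonempty.

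The step requiring care is the gluing of $S_1$ and $S_2$ into one set $S$ as in $(II)$. Their non-periodic orbits are automatically disjoint: on a common orbit both $2(a)$ ($|w_n|\geq|\lambda|^n$) and $4(a)$ ($|w_n|\leq|\lambda|^n$) would hold, forcing $|w_n|=|\lambda|^n$ for every $n$, and then $3(a)$ and $5(a)$ would place $\sigma(T_R)$ into the disjoint half-planes $\{|\xi|>|\lambda|\}$ and $\{|\xi|<|\lambda|\}$ simultaneously, which is impossible since $R\neq\emptyset$. Their $\varphi$-periodic parts coincide: on an isolated periodic orbit of period $p$ the operator $T$ acts as a weighted cyclic shift on $\mathds{C}^{p}$, so $\lambda I-T$ restricted there is invertible precisely when $\lambda^{p}\neq w_{p}(s)$; when $\lambda^{p}=w_{p}(s)$ (and $\lambda\neq0$, so the $p$ eigenvalues of the shift are distinct) that orbit contributes exactly one dimension both to $nul(\lambda I-T)$ and to $def(\lambda I-T)$, hence occurs in $S_1$ iff in $S_2$. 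Writing the common periodic part as $\{s_1,\dots,s_q\}$, putting $S=\{k_i\}\cup\{l_j\}\cup\{s_i\}$ and $V=\bigcup_{i\in\mathds{Z}}\varphi^{(i)}(S)$ (an open, $\varphi$-invariant set of isolated points, so that $K\setminus V$ is closed and $\varphi$-invariant), it remains to verify $(d)$. Since $K\setminus V$ is a closed $\varphi$-invariant subset of $K\setminus\bigcup_{i}\varphi^{(i)}(S_1)$, condition $(5)$ of Theorem~\ref{t3} makes $\lambda I-T_{K\setminus V}$ invertible or bounded below; and, $K\setminus V$ being also a closed $\varphi$-invariant subset of $K\setminus\bigcup_{i}\varphi^{(i)}(S_2)$, while surjectivity of $\lambda I-T$ restricts to closed $\varphi$-invariant subsets (via Tietze extension, since $(Tf)(x)=w(x)f(\varphi(x))$ restricted to such a set depends only on $f$ restricted to it), condition $(5)$ of Theorem~\ref{t4} makes $\lambda I-T_{K\setminus V}$ invertible or surjective. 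Together these force invertibility, i.e. $\lambda\notin\sigma(T_{K\setminus V})$, which is $(d)$.

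For the converse $(II)\Rightarrow(I)$, given $S$ with $(a)$--$(d)$ I would split it as $S_1=\{k_i\}\cup\{s_i\}$, $S_2=\{l_j\}\cup\{s_i\}$, and verify conditions $(1)$--$(3)$ and $(5)$ of Theorem~\ref{t3} for $S_1$ and of Theorem~\ref{t4} for $S_2$. Conditions $(1)$--$(3)$ are immediate from $(a)$--$(c)$. For condition $(5)$ of Theorem~\ref{t3}: the complement $K\setminus\bigcup_{i}\varphi^{(i)}(S_1)$ is the union of $K\setminus V$, where $\lambda I-T$ is invertible by $(d)$, with the orbit closures of the $l_j$; on each of the latter, $5(a)$ and $5(b)$ allow one to apply Theorems~\ref{t1},~\ref{t2} with the partition $E=R$, $F=L$, $Q=$ the orbit (so that conditions $(A)$ and $(C)$ of Theorem~\ref{t1} hold) and conclude that $\lambda I-T$ is invertible or bounded below there; since boundedness below is preserved under finite unions of closed $\varphi$-invariant subsets, $\lambda I-T$ is invertible or bounded below on the whole complement, as required. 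Condition $(5)$ of Theorem~\ref{t4} is checked in the same way, using $\sigma_r(T^\prime)$, surjectivity, and the partition $E=L$, $F=R$. The count $(4)$ is then supplied by the explicit eigenfunctions and eigenmeasures from the proofs of Theorems~\ref{t3},~\ref{t4}: each $k_i$ spans a one-dimensional eigenspace of $T$ inside $C(cl\{\varphi^{(n)}(k_i):n\in\mathds{Z}\})$, each $l_j$ spans a one-dimensional eigenspace of $T^\prime$, each $s_i$ contributes one dimension to each, all supports pairwise disjoint, and there are no further contributions because $\lambda I-T$ is invertible off $V$. Theorems~\ref{t3} and~\ref{t4} (or Theorems~\ref{t1},~\ref{t2} in the degenerate cases) then yield that $\lambda I-T$ is semi-Fredholm with $nul(\lambda I-T)=m+q<\infty$ and $def(\lambda I-T)=n+q<\infty$, hence Fredholm, and $ind(\lambda I-T)=n-m$.

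No analytic input beyond Theorems~\ref{t1}--\ref{t4} is needed; the whole difficulty is organizational. The delicate points are: proving that the $\varphi$-periodic parts of the two auxiliary sets coincide and that an exceptional periodic orbit contributes symmetrically to $nul$ and $def$; establishing $(d)$, where one must restrict one-sided invertibility of $\lambda I-T$ to closed $\varphi$-invariant subsets and then combine a left-invertibility with a right-invertibility statement into genuine invertibility on $K\setminus V$; and, in the converse direction, deducing condition $(5)$ of Theorems~\ref{t3},~\ref{t4} from $(d)$ and re-deriving the exact nullity and deficiency counts by hand --- those counts appear as hypotheses inside $(II)$ of Theorems~\ref{t3},~\ref{t4}, so they cannot simply be quoted. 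One must also keep track of the degenerate cases $nul(\lambda I-T)=0$ and $def(\lambda I-T)=0$, where Theorems~\ref{t3},~\ref{t4} do not literally apply and Theorems~\ref{t1},~\ref{t2} take their place.
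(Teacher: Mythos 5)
Your proof takes essentially the same route as the paper: the paper's entire argument for this theorem is the single sentence that it ``follows directly from Theorems~\ref{t3} and~\ref{t4},'' and your elaboration --- splitting into the nullity set $S_1$ and deficiency set $S_2$, showing their non-periodic orbits are disjoint via $3(a)$ versus $5(a)$ while their periodic parts coincide, and combining bounded-below with surjective on $K\setminus V$ to get condition $(d)$ --- is a sound filling-in of the details the paper omits. One small flag: your own counts $nul(\lambda I-T)=m+q$ and $def(\lambda I-T)=n+q$ give $ind(\lambda I-T)=nul-def=m-n$ under the convention of~\cite{EE} and~\cite{Ka}, whereas you conclude $ind(\lambda I-T)=n-m$; that sign is inherited from the theorem statement itself and appears to be a typo there rather than an error you introduced.
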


\begin{remark} \label{r2} It follows from Theorems 3.10 and 3.12 in~\cite{Ki} that condition $II(d)$ in the statement of Theorem~\ref{t5} is equivalent to the following. The set $K \setminus V$ is the union of three disjoint subsets (of which two might be empty) $E$, $F$, and $P$ with the properties.
\begin{itemize}
  \item The set $E$ is closed in $K$, $\varphi(E) = E$, and if $E \neq \emptyset$ then $\sigma(T_E) \subset \{\zeta \in \mathds{C} : |\zeta| > |\lambda| \}$.
  \item The set $F$ is closed in $K$, $\varphi(F) = F$, and if $F \neq \emptyset$ then $\sigma(T_F) \subset \{\zeta \in \mathds{C} : |\zeta| < |\lambda| \}$.
  \item The set $P$ is open in $K$, $\varphi(P) = P$, and if $P \neq \emptyset$ then $P$ consists of $\varphi$-periodic points with periods bounded by some $N \in \mathds{N}$ and $\lambda \not \in \sigma(T_{cl P})$.
\end{itemize}

\end{remark}

To complete our description of Fredholm and semi-Fredholm operators of the form $\lambda I - T$ it remains to consider the case when $\lambda = 0$. Of course the only interesting case is $0 \in \sigma(T)$ and therefore we assume that the set $Z = \{k \in K : w(k) = 0\}$ is not empty.
It follows immediately from Corollary I.4.7 in~\cite{EE} that if $T$ is semi-Fredholm then $Z$ must be a clopen subset of $K$. Moreover, clearly $Z$ must be a finite subset of $K$. Thus we obtain the following simple proposition which is most probably known.

\begin{proposition} \label{p1} Let $T$ be an operator of form $(1)$ on $C(K)$. The following conditions are equivalent.
\begin{enumerate}
  \item $T$ is semi-Fredholm.
  \item $T$ is Fredholm.
  \item $T$ is Fredholm and $ind\,  T =0$.
  \item The set $Z = \{k \in K : w(k) = 0\}$ is finite and consists of points isolated in $K$.
\end{enumerate}

\end{proposition}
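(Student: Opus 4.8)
The plan is to reduce the statement to a fact about a multiplication operator. Write $M_wf=wf$ and $C_\varphi f=f\circ\varphi$, so that $T=M_wC_\varphi$; since $\varphi$ is a homeomorphism of $K$ onto itself, $C_\varphi$ is an invertible isometry of $C(K)$, and hence $\mathrm{ran}(T)=\mathrm{ran}(M_w)$, $nul(T)=nul(M_w)$, $def(T)=def(M_w)$ and $ind(T)=ind(M_w)$. So it is enough to prove conditions $(1)$--$(4)$ equivalent with $T$ replaced by $M_w$, and I would do this by going around the cycle $(4)\Rightarrow(3)\Rightarrow(2)\Rightarrow(1)\Rightarrow(4)$.

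For $(4)\Rightarrow(3)$: if $Z$ is finite and consists of points isolated in $K$, then $Z$ is clopen (it is closed since $Z=w^{-1}(0)$, and open as a finite set of isolated points), so $C(K)=C(Z)\oplus C(K\setminus Z)$ with $\dim C(Z)=|Z|$. The operator $M_w$ annihilates the first summand, while $1/w\in C(K\setminus Z)$ because $K\setminus Z$ is compact and $w$ has no zeros there, so $M_w$ acts invertibly on $C(K\setminus Z)$. Thus $M_w$ is Fredholm with $\ker M_w=C(Z)$ and $C(K)/\mathrm{ran}(M_w)\cong C(Z)$, whence $ind(M_w)=|Z|-|Z|=0$. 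The implications $(3)\Rightarrow(2)\Rightarrow(1)$ are immediate from the definitions.

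The only step needing real work is $(1)\Rightarrow(4)$, and within it the claim that a closed range forces $Z$ to be clopen. Assume $M_w$ is semi-Fredholm, so $\mathrm{ran}(M_w)$ is closed; by the open mapping theorem there is $c>0$ with $\|wf\|\ge c\,\mathrm{dist}(f,\ker M_w)$ for all $f\in C(K)$. If $Z$ were not clopen, pick $k_0\in Z\setminus\mathrm{int}\,Z$; then $w(k_0)=0$, yet every neighbourhood of $k_0$ meets $\{w\neq 0\}$, so for each $n$ the open neighbourhood $\{|w|<1/n\}$ of $k_0$ contains a point $k_n$ with $0<|w(k_n)|<1/n$. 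Choose $f_n\in C(K)$ with $0\le f_n\le 1$, $f_n(k_n)=1$, and $f_n\equiv 0$ on $\{|w|\ge 2|w(k_n)|\}$; then $\|wf_n\|\le 2|w(k_n)|\to 0$, while every $h\in\ker M_w$ satisfies $h(k_n)=0$ (since $w(k_n)\neq 0$), so $\mathrm{dist}(f_n,\ker M_w)\ge |f_n(k_n)|=1$, contradicting the lower bound. Hence $Z$ is clopen, and as in the second paragraph $nul(M_w)=def(M_w)=\dim C(Z)$; since $M_w$ is semi-Fredholm one of these is finite, so $Z$ is finite, and a finite clopen set consists of isolated points. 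This gives $(4)$, and re-running the $(4)\Rightarrow(3)$ computation also yields $(3)$, closing the cycle.

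I expect this "closed range $\Rightarrow$ $Z$ clopen" estimate to be the main obstacle; everything else is bookkeeping with the band decomposition $C(K)=C(Z)\oplus C(K\setminus Z)$. An alternative to the elementary argument above is to invoke~\cite[Corollary I.4.7]{EE}, producing the $f_n$ (together with a dual sequence of discrete measures) as a normalized non-precompact sequence annihilated in the limit by $M_w$, respectively $M_w'$; but the estimate above has the advantage of disposing of the $nul<\infty$ and the $def<\infty$ alternatives simultaneously.
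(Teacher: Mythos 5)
Your proof is correct, and it is more detailed than the paper's, which disposes of the proposition in two sentences: it asserts that semi-Fredholmness forces $Z$ to be clopen "immediately" from the sequential compactness criterion of~\cite[Corollary I.4.7]{EE} (the route you mention only as an alternative), and that finiteness of $Z$ is then clear. The substantive difference is in the mechanism for the key step "closed range $\Rightarrow Z$ clopen": the paper's intended argument would produce, from a boundary point of $Z$, a normalized pairwise disjoint (hence non-precompact) sequence annihilated in the limit by the operator, and would then need to treat the $nul<\infty$ and $def<\infty$ alternatives separately (the latter by passing to the adjoint and discrete measures); your direct estimate $\|wf\|\ge c\,\mathrm{dist}(f,\ker M_w)$ from the open mapping theorem uses only closedness of the range and so handles both alternatives at once, which is a genuine economy. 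Your explicit reduction to the multiplication operator $M_w$ via the invertible isometry $C_\varphi$, and the identification $\ker M_w\cong C(K)/\mathrm{ran}(M_w)\cong C(Z)$ giving $ind=0$, are left implicit in the paper but are exactly the bookkeeping it relies on. All steps check out (the Urysohn separation of $k_n$ from the closed set $\{|w|\ge 2|w(k_n)|\}$ is legitimate since $K$ is normal, and a finite clopen subset of a Hausdorff space does consist of isolated points), so this is a complete, slightly more self-contained proof than the one in the paper.
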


\section{Essential spectra of operators of form $(1)$.}

We are going now to describe the sets $\sigma_i(T), i=1, \ldots, 5$ for operators of form $(1)$. To avoid unnecessary complicated and cumbersome statements we will start with the following remark.

\begin{remark} \label{r3} (1) In view of Proposition~\ref{p1} it is enough to describe the sets $\sigma_i(T) \setminus \{0\}$.

(2) For any $p \in \mathds{N}$ let $\Pi^p$ be the set of all \textbf{non-isolated} $\varphi$-periodic points of period $p$. Then it follows from the results of the previous section that
$$cl \{\lambda \in \mathds{C} : \; \exists p \in \mathds{N}, \exists k \in Int(\Pi^p) \; \lambda^p = w_p(k) \} \subseteq \sigma_1(T).$$
Therefore in our next theorem we will assume that $Int(\Pi^p) = \emptyset , p \in \mathds{N}$.
\end{remark}

From the results in the previous section we obtain the following theorem.

\begin{theorem} \label{t6}
Let $T$ be an operator of form $(1)$ on $C(K)$.  Assume that $Int(\Pi^p) = \emptyset , p \in \mathds{N}$ where $\Pi^p$ are the sets introduced in Remark~\ref{r3}  Then the sets $\sigma_i(T), i = 1, \ldots , 5$ are rotation invariant. Moreover, if  $\lambda \in \mathds{C} \setminus{0}$ then

\begin{enumerate}
  \item The following conditions are equivalent.

  $(IA)$ $\lambda \in \sigma_1(T)$.

  $(IB)$ There are two (not necessarily distinct) points $k_1, k_2 \in K$ such that \textbf{none of them is an isolated point} in $K$, at $k_1$ we have inequalities $2(a)$ and $2(b)$, and at $k_2$ - inequalities $4(a)$ and $4(b)$.

  \item  The following conditions are equivalent.

  $(IIA)$ $\lambda \in \sigma_2(T)$.

  $(IIB)$ There is a \textbf{non-isolated point} $k \in K$ satisfying inequalities $2(a)$ and $2(b)$.

  \item The following conditions are equivalent.

  $(IIIA)$ $\lambda \in \sigma_3(T)$.

  $(IIIB)$ There is a \textbf{non-isolated point} $k \in K$ such that at this point either inequalities $2(a)$ and $2(b)$ or inequalities $4(a)$ and $4(b)$ are satisfied.

  \item The following conditions are equivalent.

  $(IVA)$ $\lambda \in \sigma_4(T)$.

  $(IVB)$ Let $K_1$ (respectively $K_2$) be the set of all points in $K$ that are not isolated $\varphi$-periodic points and satisfy $2(a)$ and $2(b)$ (respectively $4(a)$ and $4(b)$). Then either at least one of these sets is infinite or they have distinct cardinalities.

  \item The following conditions are equivalent.

  $(VA)$ $\lambda \in \sigma_5(T)$

  $(VB)$ There is a $k \in K$ such that $k$ is not an isolated $\varphi$-periodic point and such that at this point either inequalities $2(a)$ and $2(b)$ or inequalities $4(a)$ and $4(b)$ are satisfied.
\end{enumerate}
\end{theorem}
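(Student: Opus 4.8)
The plan is to reduce the description of each $\sigma_i(T)$ to the results of Section~2. By Definition~\ref{d3}, for $\lambda\neq 0$: $\lambda\notin\sigma_1(T)$ iff $\lambda I-T$ is semi-Fredholm; $\lambda\notin\sigma_2(T)$ iff it is semi-Fredholm with $nul(\lambda I-T)<\infty$; $\lambda\notin\sigma_3(T)$ iff it is Fredholm; $\lambda\notin\sigma_4(T)$ iff it is Fredholm with $ind(\lambda I-T)=0$; and $\sigma_5(T)$ is the union of $\sigma_1(T)$ with all (necessarily bounded) components of $\mathbb C\setminus\sigma_1(T)$ that are disjoint from the resolvent set. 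Each of the properties ``semi-Fredholm'', ``$nul<\infty$'', ``$def<\infty$'', ``Fredholm'', ``$ind=0$'' is described for operators of form $(1)$ with $\lambda\neq 0$ by Theorems~\ref{t1}--\ref{t5}, while $\lambda=0$ is trivial by Proposition~\ref{p1}; so the theorem is a matter of assembling those results. Two preliminary remarks do most of the bookkeeping. First, conditions $2(a),2(b),4(a),4(b),5(a),5(b)$ depend on $\lambda$ only through $|\lambda|$, hence are rotation invariant; the only place a genuine $\lambda$ (not $|\lambda|$) enters Theorems~\ref{t3}--\ref{t5} is the equality $\lambda^{p}=w_p(s)$ at an \emph{isolated} periodic point $s$, but there the orbit of $s$ is clopen and on it $\lambda I-T$ acts as a $p\times p$ matrix whose nullity equals its defect and whose index is $0$, so isolated periodic points affect none of the five properties above, and rotation invariance of the $\sigma_i(T)$ follows. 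Second, a Baire category argument using $Int(\Pi^{p})=\emptyset$ shows that a non-isolated periodic point is always a limit of non-periodic points or of periodic points of strictly larger period, i.e. behaves like a case-$(II)$ point of Lemma~\ref{l1}, so the constructions of Lemma~\ref{l2} and Remark~\ref{r1} apply to it.

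The core of the argument is the equivalence: \emph{there is a non-isolated $k\in K$ satisfying $2(a),2(b)$} if and only if \emph{$\lambda I-T$ is not semi-Fredholm with $nul(\lambda I-T)<\infty$}, together with its formal dual ($2(a),2(b)$ replaced by $4(a),4(b)$, ``$nul$'' by ``$def$'', via Remark~\ref{r1} and Theorem~\ref{t4}). The forward implication is the disjointness argument in the proof of Lemma~\ref{l2}: from a non-isolated point carrying $2(a),2(b)$ one builds pairwise disjoint almost-eigenvectors $v_n\in C''(K)$ with $\|T''v_n-\lambda v_n\|=o(\|v_n\|)$, so by \cite[Cor.~I.4.7]{EE} and \cite[Thm.~I.3.7]{EE} the operator $\lambda I-T$ cannot be semi-Fredholm with finite nullity (for a non-isolated periodic $k$ one uses the previous remark). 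For the converse one first observes that the set $B_2$ of points of $K$ satisfying $2(a),2(b)$ is \emph{closed} --- a limit of distinct points of $B_2$ again lies in $B_2$ because each inequality $2(a),2(b)$ involves only finitely many iterates of $\varphi$ and $w$ is continuous. Hence if $B_2$ has no non-isolated point it is finite and discrete; removing the finitely many orbits of its points leaves a (closed, $\varphi$-invariant) remainder $V$ with $B_2\cap V=\emptyset$, so $\lambda\notin\sigma_{ap}(T_V)$ by Lemma~\ref{l1}, i.e. condition $(5)$ of Theorem~\ref{t3} holds automatically, and Corollary~\ref{c1}, Lemma~\ref{l3} and Theorem~\ref{t3} then give that $\lambda I-T$ is semi-Fredholm with finite nullity. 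This equivalence is exactly $(IIA)\Leftrightarrow(IIB)$. Since $\lambda I-T$ is semi-Fredholm iff it is semi-Fredholm with $nul<\infty$ \emph{or} with $def<\infty$, intersecting with the dual statement gives $(IA)\Leftrightarrow(IB)$; and since, by Theorem~\ref{t5}, $\lambda I-T$ is Fredholm iff both $B_2$ and $B_4$ are finite and discrete, combining with the dual statement gives $(IIIA)\Leftrightarrow(IIIB)$.

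For $(IVA)\Leftrightarrow(IVB)$ I start from the index formula of Theorem~\ref{t5}: when $\lambda I-T$ is Fredholm, $ind(\lambda I-T)=n-m$, where $m$ counts non-periodic points satisfying $2(a),2(b),3(a),3(b)$ and $n$ counts non-periodic points satisfying $4(a),4(b),5(a),5(b)$. When $\lambda I-T$ is Fredholm these points are isolated and, by Lemmas~\ref{l3} and~\ref{l4}, automatically satisfy the auxiliary conditions; hence $m=|K_1|$, $n=|K_2|$ for the sets $K_1,K_2$ of $(IVB)$, and $\lambda\notin\sigma_4(T)$ iff $\lambda I-T$ is Fredholm and $|K_1|=|K_2|<\infty$, i.e. $\lambda\in\sigma_4(T)$ iff $K_1$ or $K_2$ is infinite or $|K_1|\neq|K_2|$, which is $(IVB)$ (consistency with $\sigma_3\subseteq\sigma_4$ requires knowing that a non-isolated point of $B_2$ already forces $|K_1|=\infty$, again via closedness of $B_2$ and $Int(\Pi^{p})=\emptyset$). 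For $(VA)\Leftrightarrow(VB)$: by rotation invariance and boundedness $\sigma_1(T)$ is a union of closed concentric annuli and circles, so every component of $\mathbb C\setminus\sigma_1(T)$ is an open annulus, a punctured or unpunctured disc, or the exterior of a disc, and the unbounded one meets the resolvent set. Thus $\lambda\notin\sigma_5(T)$ exactly when the component $C$ of $\mathbb C\setminus\sigma_1(T)$ containing $\lambda$ is not entirely contained in $\sigma(T)$ (using standard semi-Fredholm stability on $C$, where $\lambda I-T$ is everywhere semi-Fredholm). Using Theorems~\ref{t1},~\ref{t2} and the structure of $\sigma(T_{R_k})$, $\sigma(T_{L_k})$ from \cite{Ki},~\cite{Ki1} one identifies $C\subseteq\sigma(T)$ with the presence, at the moduli occurring in $C$, of a point of $K$ that is not an isolated periodic point and carries $2(a),2(b)$ or $4(a),4(b)$ --- this is condition $(VB)$, which is rotation invariant and, one checks, constant on $C$.

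The step I expect to be the main obstacle is the sufficiency (``$\Leftarrow$'') half of the core equivalence and its refinements: that the absence of a non-isolated obstruction point forces condition $(5)$ of Theorems~\ref{t3} and~\ref{t4} and condition $(d)$ of Theorem~\ref{t5}; that a non-isolated obstruction point forces the corresponding counting set $K_1$ or $K_2$ to be infinite; and that $(VB)$ is constant on the annular components of $\mathbb C\setminus\sigma_1(T)$. All of these require re-running the constructions of Section~2 without assuming a priori that $\lambda I-T$ is semi-Fredholm, and lean on the closedness of $B_2,B_4$ and on the spectral structure theory of \cite{Ki} and \cite{Ki1}.
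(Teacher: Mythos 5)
Your overall strategy is the one the paper intends: Theorem~\ref{t6} is stated with no separate proof beyond the phrase that it ``follows from the results of the previous section,'' and your proposal is precisely an assembly of Theorems~\ref{t1}--\ref{t5}, Lemma~\ref{l1}, Lemma~\ref{l2}, Remark~\ref{r1} and Proposition~\ref{p1}, with the rotation-invariance and $Int(\Pi^p)=\emptyset$ observations handled correctly. The necessity halves (a non-isolated point carrying $2(a)$--$2(b)$ or $4(a)$--$4(b)$ kills semi-Fredholmness with finite nullity, resp.\ finite defect) are exactly the disjoint-almost-eigenvector arguments of Lemma~\ref{l2} and Remark~\ref{r1}, and your reductions of parts (1), (3), (5) to part (2) and its dual are sound.

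There is, however, a concrete circularity in your sufficiency step, and it is not merely the loose end you flag. To conclude via Theorem~\ref{t3} that $\lambda I-T$ is semi-Fredholm with finite nullity you must verify \emph{all} of conditions II(1)--II(5), in particular II(2): the inclusions $3(a)$ and $3(b)$ for the $\omega$- and $\alpha$-limit sets $R_i,L_i$ of each isolated point of $B_2$. You justify this by citing Corollary~\ref{c1} and Lemma~\ref{l3}, but both of those are proved \emph{under the hypothesis} that $\lambda I-T$ is already semi-Fredholm with $nul(\lambda I-T)<\infty$ --- which is exactly what you are trying to establish. Your closedness argument does give $\lambda\notin\sigma_{ap}(T_{R_i})$ (any approximate eigenvalue would produce a non-isolated point of $B_2$ by Lemma~\ref{l1}), and the case $\lambda\notin\sigma(T_{R_i})$ then yields $3(a)$ by the connectedness argument in the paper's proof of Lemma~\ref{l3}; but the remaining case $\lambda\in\sigma_r(T_{R_i})$ is not excluded by anything you say. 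In that case Theorem~\ref{t3}(II) fails, yet the construction inside the proof of Lemma~\ref{l3} shows $\lambda\in\sigma_2(T)$, so to save the equivalence $(IIA)\Leftrightarrow(IIB)$ you must show that $\lambda\in\sigma_r(T_{R_i})$ itself forces a non-isolated point satisfying $2(a)$--$2(b)$; the natural candidates (points of the set $Q$ in the partition $R_i=E\cup Q\cup F$) satisfy inequalities of type $4(a)$--$4(b)$, not $2(a)$--$2(b)$, so this needs a genuine argument that is missing. A similar unsupported assertion occurs in part (4): that a non-isolated point of $B_2$ forces the counting set $K_1$ to be infinite does not follow from closedness of $B_2$ alone (a closed set can consist of a single non-isolated point), and without it your description of $\sigma_4(T)$ is not reconciled with $\sigma_3(T)\subseteq\sigma_4(T)$.
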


The results of the previous section together with Theorems~\ref{t1} and~\ref{t2} provide the following corollary.

\begin{corollary} \label{c3} Let $T$ be an operator of form $(1)$ on $C(K)$. Then

(1) If $T$ is band irreducible then $\sigma_1(T) = \sigma(T)$.

(2) If $K$ has no $\varphi$-periodic isolated points then $\sigma_5(T) \setminus \{0\} = \sigma(T) \setminus \{0\}$.

(3) If $K$ has no isolated points then $\sigma_3(T) = \sigma(T)$.

(4) Let $O$ be the set of all isolated $\varphi$-periodic points in $K$. Then the essential spectral radius $\rho_e(T)$ of $T$ can be computed in the following way (see e.g.~\cite[Theorem 3.23]{Ki}).

$$\rho_e(T) = \rho(T_{K \setminus O}) = \max \limits_{\mu \in \mathcal{M}_\varphi} \exp \int{\ln |w| d \mu}, $$
where $\mathcal{M}_\varphi$ is the set of all $\varphi$-invariant regular Borel probability measures on $K \setminus O$. In particular, if $O = \emptyset$ then $\rho_e(T) = \rho(T)$.
\end{corollary}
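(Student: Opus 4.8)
The plan is to read all four assertions off Theorem~\ref{t6}, Theorems~\ref{t1}--\ref{t2}, Proposition~\ref{p1}, and the spectral radius formula \cite[Theorem 3.23]{Ki}, exploiting the fact that every clause by which Theorem~\ref{t6} separates $\sigma_i(T)$ from $\sigma(T)$ involves an \emph{isolated} (resp.\ isolated $\varphi$-periodic) point, so any hypothesis forbidding such points collapses $\sigma_i(T)$ onto $\sigma(T)$. Throughout I use that, for $\lambda\neq0$, $\lambda\in\sigma(T)$ if and only if some point of $K$ satisfies $2(a){-}2(b)$ or $4(a){-}4(b)$ (Lemma~\ref{l1} and its counterpart for $\widetilde T$). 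For (3): if $K$ has no isolated point, then the word ``non-isolated'' in condition $(IIIB)$ of Theorem~\ref{t6} is vacuous, so $\sigma_3(T)\setminus\{0\}$ is precisely the set of $\lambda\neq0$ admitting a point with $2(a){-}2(b)$ or $4(a){-}4(b)$, i.e.\ $\sigma(T)\setminus\{0\}$; and if $0\in\sigma(T)$ then $Z=\{w=0\}\neq\emptyset$ contains a non-isolated point, so Proposition~\ref{p1} gives $0\in\sigma_3(T)$. For (2): if $K$ has no isolated $\varphi$-periodic point, the clause ``$k$ is not an isolated $\varphi$-periodic point'' in $(VB)$ holds for every $k$, so again $\sigma_5(T)\setminus\{0\}=\sigma(T)\setminus\{0\}$; the degenerate case $\mathrm{Int}(\Pi^p)\neq\emptyset$ only enlarges $\sigma_1(T)\subseteq\sigma_5(T)$ by Remark~\ref{r3}(2) and so is consistent.

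For (1) we may assume $\dim C(K)=\infty$. If $F\subsetneq K$ is a nonempty closed set with $\varphi(F)=F$, then $\{f\in C(K):f|_F=0\}$ is a proper nonzero $T$-invariant band; hence band irreducibility of $T$ forces $\varphi$ to have no periodic points, every orbit to be dense, and (the complement of a discrete infinite orbit being a proper nonempty closed $\varphi$-invariant set) $K$ to have no isolated points. Now let $\lambda\in\sigma(T)\setminus\{0\}$ and suppose $\lambda I-T$ is semi-Fredholm. If $nul(\lambda I-T)<\infty$, Corollary~\ref{c1} either produces such a forbidden set $F$ or gives $nul(\lambda I-T)=0$, whence $\lambda\in\sigma_r(T)$; but in Theorem~\ref{t2} the set $P$ is empty (no periodic points) and the sets $E,F$ of condition $(A)$ are closed and $\varphi$-invariant, hence empty (they cannot be $K$ since $\lambda\in\sigma(T)$), so condition $(C)$ would force $\bigcap_n cl\{\varphi^{m}(k):m\geq n\}=\emptyset$ for $k\in Q=K$, impossible by compactness. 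The case $def(\lambda I-T)<\infty$ is symmetric, using Corollary~\ref{c2} and Theorem~\ref{t2} for $T'$. Hence $\lambda\in\sigma_1(T)$. Finally $0\notin\sigma(T)$: otherwise $\emptyset\neq Z\neq K$ (if $Z=K$ then $T=0$ and $\dim C(K)=1$) and $\{f:f|_Z=0\}$ is a proper nonzero $T$-invariant band. Thus $\sigma_1(T)=\sigma(T)$.

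For (4), $\rho_e(T)=r_1(T)=\sup\{|\lambda|:\lambda\in\sigma_1(T)\}$ by \cite[Theorem I.4.10]{EE}. \emph{Upper bound}: $O$ is open and $\varphi$-invariant, and a homeomorphism preserves isolatedness and periodicity, so the whole orbit of any non-isolated point lies in $K\setminus O$; hence by Theorem~\ref{t6}(1) every $\lambda\in\sigma_1(T)\setminus\{0\}$ comes with a non-isolated $k$ satisfying $2(a){-}2(b)$ or $4(a){-}4(b)$, and then $|\lambda|^n\leq|w_n(k)|$ (or $|w_n(\varphi^{(-n)}(k))|$) $\leq\|T_{K\setminus O}^n\|$, so $|\lambda|\leq\rho(T_{K\setminus O})$; the extra points of $\sigma_1(T)$ coming from Remark~\ref{r3}(2) satisfy $|\lambda|=|w_p(k)|^{1/p}\leq\rho(T_{K\setminus O})$ with $k\in K\setminus O$ as well. \emph{Lower bound}: using \cite[Theorem 3.23]{Ki} together with weak-$\star$ compactness and Krein--Milman, choose an ergodic $\varphi$-invariant probability measure $\mu_0$ on $K\setminus O$ with $\exp\int\ln|w|\,d\mu_0=\rho(T_{K\setminus O})$. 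Its support $K_0\subseteq K\setminus O$ is either a single $\varphi$-periodic orbit — non-isolated $\varphi$-periodic points on which $\lambda^{p}=w_{p}$ for the appropriate $p$, so such $\lambda$ satisfy $2(a){-}2(b)$ with equality and lie in $\sigma_2(T)$, or in $\sigma_1(T)$ via Remark~\ref{r3}(2) when $K_0\subseteq\mathrm{Int}(\Pi^{p})$ — or a perfect set, on which $\rho(T_{K_0})=\rho(T_{K\setminus O})$ and a maximal-modulus spectral value of $T_{K_0}$ yields, by Lemma~\ref{l1} or its dual, a point of $K_0$ (non-isolated in $K$) with $2(a){-}2(b)$ or $4(a){-}4(b)$, hence a point of $\sigma_2(T)\cup\sigma_3(T)$ of modulus $\rho(T_{K\setminus O})$. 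Either way $\rho(T_{K\setminus O})\leq r_j(T)=\rho_e(T)$. The displayed integral identity is then \cite[Theorem 3.23]{Ki}, and $O=\emptyset$ gives $\rho_e(T)=\rho(T)$.

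The main obstacle is the lower bound in (4): one must be certain that the (possibly infinitely many) isolated $\varphi$-periodic orbits making up $O$ do not raise the essential spectral radius — equivalently that the eigenvalues $\mu$ with $\mu^{p}=w_{p}$ along such orbits accumulate only at points of $K\setminus O$, hence inside the disc of radius $\rho(T_{K\setminus O})$ — and that the extremal invariant measure $\mu_0$ can genuinely be traded for a spectral value of $T_{K\setminus O}$ of the right modulus located at a point that survives as a member of some $\sigma_i(T)$; the case split into periodic support versus perfect support is the delicate point, whereas (1)--(3) are essentially bookkeeping with Theorem~\ref{t6}.
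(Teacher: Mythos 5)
The paper offers no written proof of this corollary (it is stated as an immediate consequence of Theorems~\ref{t1}, \ref{t2}, \ref{t6}, \ref{t7} and Proposition~\ref{p1}), and your overall strategy of reading parts (2)--(4) off those results is certainly the intended one; those parts are essentially correct, modulo small repairs (e.g.\ a periodic point $s$ with $\lambda^{p}=w_{p}(s)$ need not itself satisfy $2(a)$ --- one must pass to the point of the orbit where the partial products $|w_i|$ are maximal --- and the cases excluded by the standing hypothesis $Int(\Pi^p)=\emptyset$ of Theorem~\ref{t6} should be routed through $\Sigma\subseteq\sigma_1(T)$ and Theorem~\ref{t7} rather than waved at).

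Part (1), however, contains a genuine gap. Your argument rests on the claim that for a proper nonempty closed set $F$ with $\varphi(F)=F$ the ideal $\{f\in C(K):f|_F=0\}$ is a $T$-invariant \emph{band}. In $C(K)$ the bands are exactly the ideals $I_A=\{f:f|_A=0\}$ with $A$ \emph{regularly closed} ($A=cl(Int\,A)$); if $F$ has empty interior then $I_F^{dd}=C(K)$ and $I_F$ is merely a closed ideal, so band irreducibility does not exclude it. Consequently band irreducibility does \emph{not} force $\varphi$ to have no periodic points or every orbit to be dense: the full two-sided shift on $\{0,1\}^{\mathds{Z}}$ with $w\equiv 1$ is band irreducible (any regularly closed forward-invariant set has nonempty interior whose forward orbit is dense) yet has a dense set of periodic points and many proper closed invariant sets. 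This breaks the key step where you declare the sets $E$ and $F$ of Theorem~\ref{t2} empty ``because they are closed and $\varphi$-invariant'': they may well be nonempty sets with empty interior (e.g.\ fixed points), and then no invariant band is produced. To close the gap one must show that the configuration $(A)\wedge(B)$ (or $(A)\wedge(C)$) with $Q\neq\emptyset$ itself manufactures a proper nonempty \emph{regularly closed} forward-invariant set --- a trapping neighbourhood of the attractor, obtainable here from the spectral gap $\sigma(T_E)\subset\{|\xi|<|\lambda|\}$, $\sigma(T_F)\subset\{|\xi|>|\lambda|\}$ via the sets $\{k:|w_m(\varphi^{(j)}(k))|<|\lambda|^m \ \forall j\geq 0\}$ or a Conley-type Lyapunov function --- and likewise that $cl\,P$, being regularly closed and invariant, must be empty or all of $K$ (the latter being impossible for infinite band-irreducible $K$). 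Your deduction that $K$ has no isolated points survives, but only via the regularly closed set $cl\{\varphi^{(n)}(k_0):n\geq 1\}$, not via the complement of the orbit; and the claim $0\notin\sigma(T)$ fails for the same band-versus-ideal reason (it is also unnecessary: absence of isolated points plus Proposition~\ref{p1} already gives $0\in\sigma(T)\Rightarrow 0\in\sigma_1(T)$).
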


\newpage

The following diagrams illustrate our results from Theorem~\ref{t6}

\centerline{ \textbf{ DIAGRAM I}}

\includegraphics[scale=0.50]{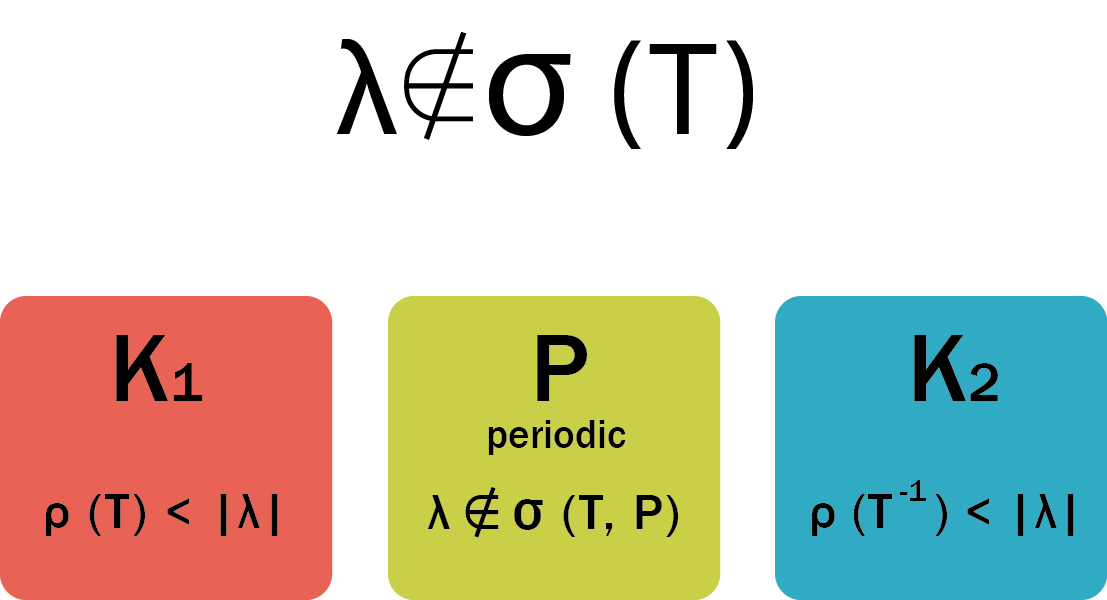}

\newpage
\centerline{ \textbf{ DIAGRAM II}}
\includegraphics[scale=0.50]{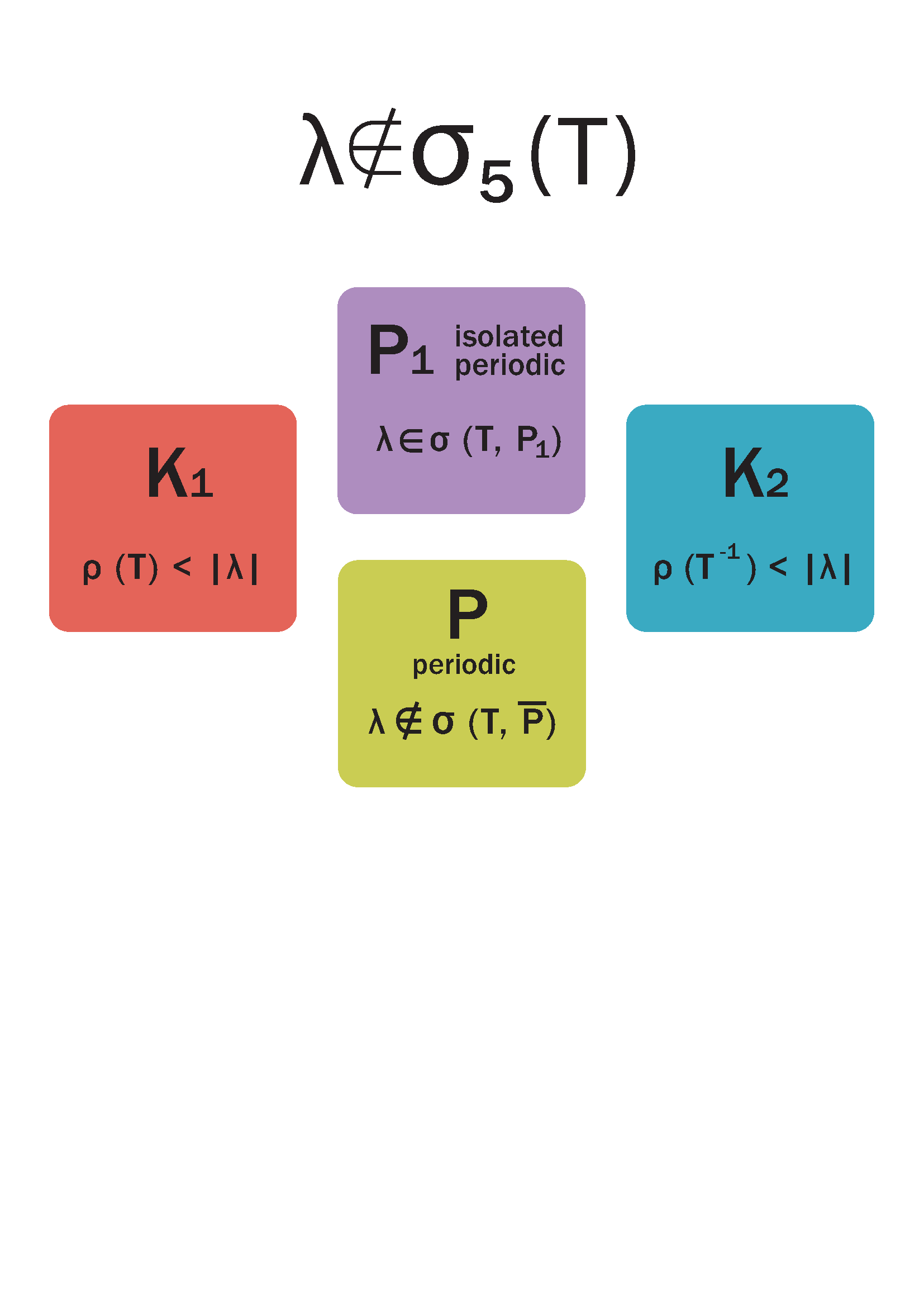}

\newpage
\centerline{ \textbf{ DIAGRAM III}}

\includegraphics[scale=0.20]{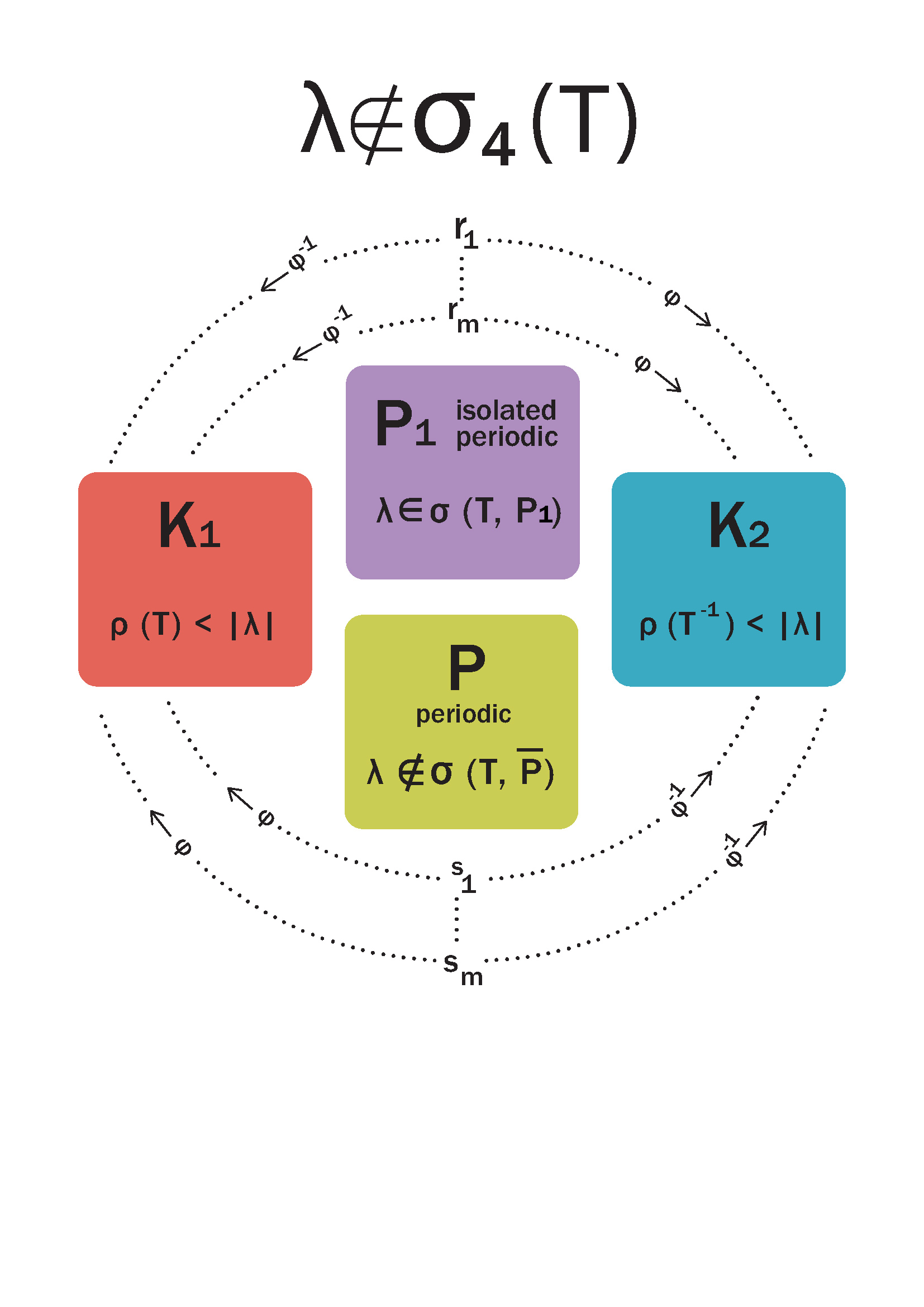}

\newpage
\centerline{ \textbf{ DIAGRAM IV}}

\includegraphics[scale=0.20]{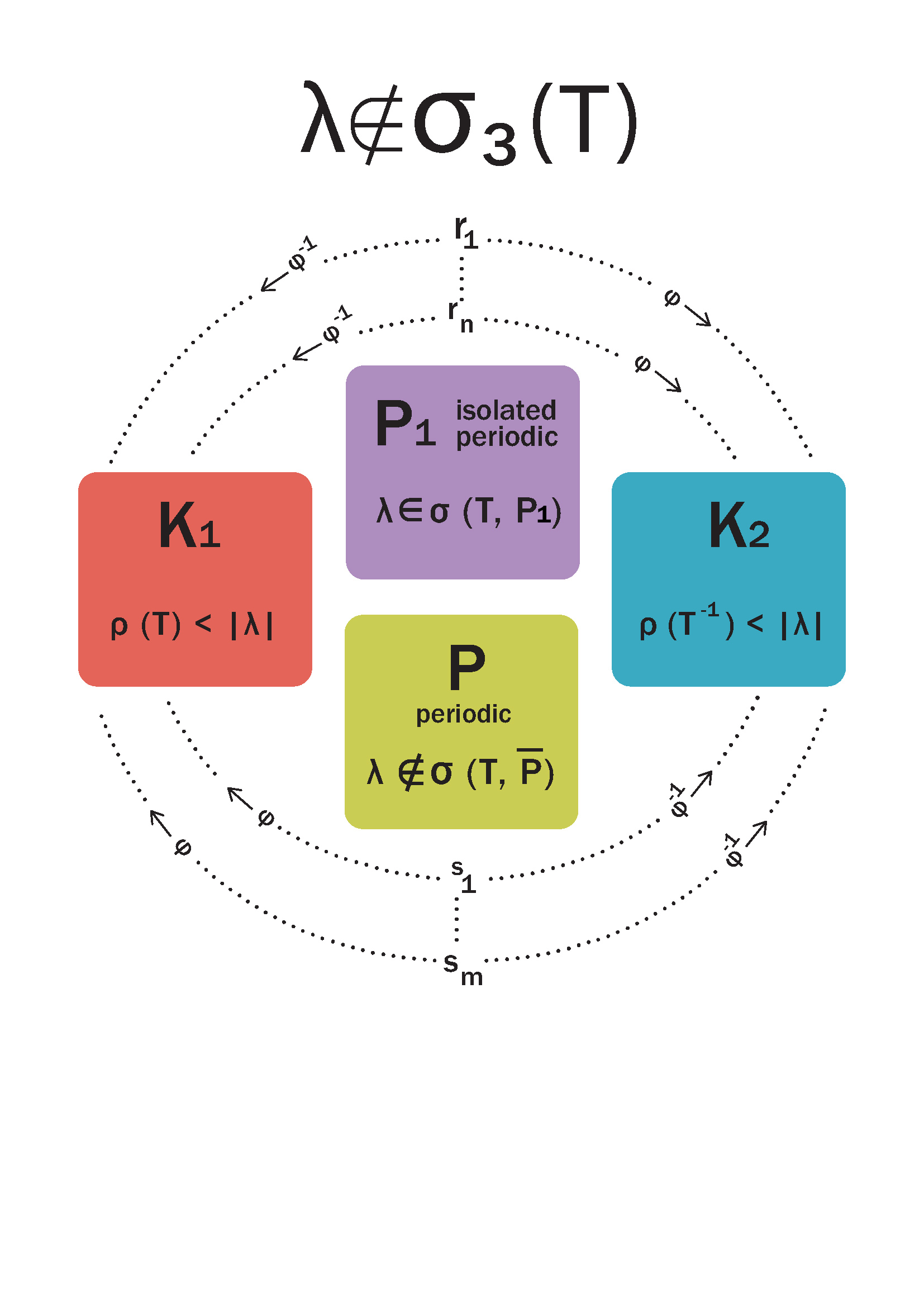}

\newpage
\centerline{ \textbf{ DIAGRAM V}}

\includegraphics[scale=0.20]{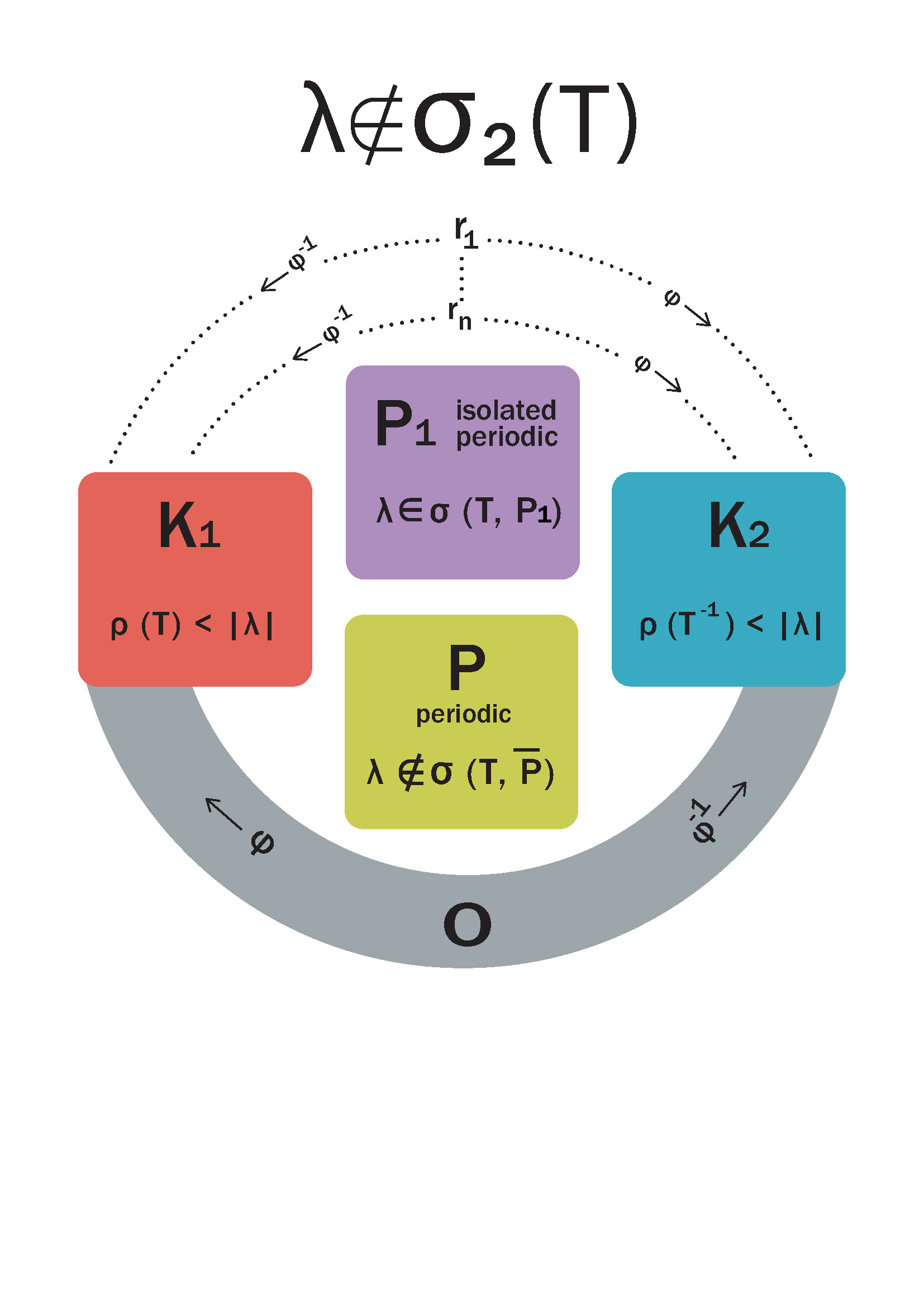}

\newpage
\centerline{ \textbf{ DIAGRAM VI}}

\includegraphics[scale=0.20]{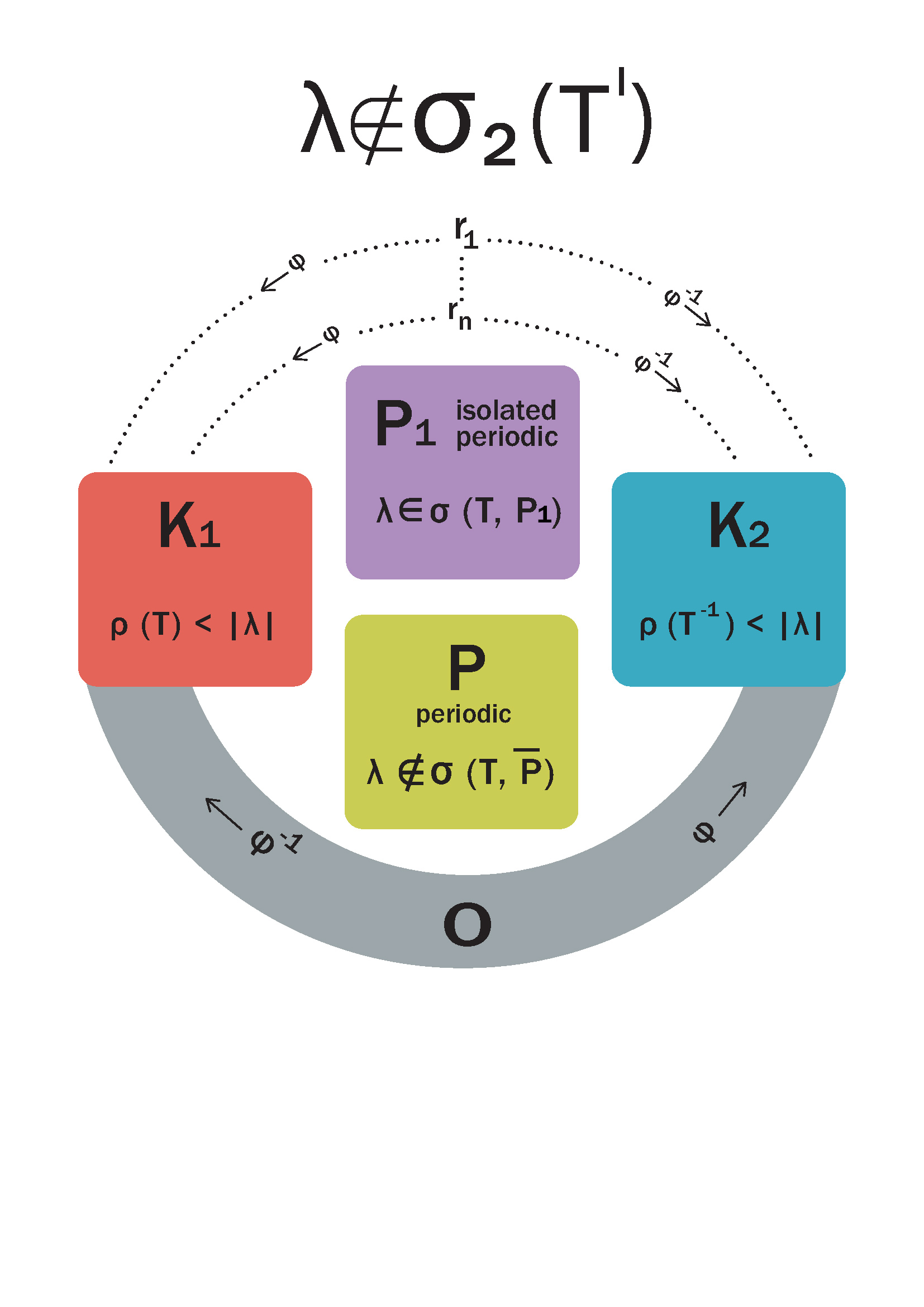}

Our next Theorem (that also follows from the results in \cite{Ki} and in the previous section) provides an alternative description of the sets $\sigma_1(T), \sigma_2(T), \sigma_3(T)$, and $\sigma_5(T)$ that complements the one in Theorem~\ref{t6}. To state it let us introduce or recall the following notations.

\begin{itemize}
  \item $\Gamma$ is the unit circle. $\Gamma = \{\zeta \in \mathds{C} : |\zeta| = 1\} $.
  \item $O_1$ is the set of all isolated $\varphi$-periodic points in $K$.
  \item $K_1 = K \setminus O_1$.
  \item $\Pi^p, p \in \mathds{N}$ is the set of all $\varphi$-periodic points of period $p$ in $K_1$.
  \item $\Sigma = cl \{\lambda \in \mathds{C} : \; \exists p \in \mathds{N}, \exists k \in Int(\Pi^p) \; \lambda^p = w_p(k) \}$.
  \item $O_2 = \bigcup \limits_{p=1}^\infty Int(\Pi^p)$.
  \item $K_2 = K_1 \setminus O_2 $.
  \item $O_3$ is the set of all isolated points in $K$ satisfying either conditions $3(a)$ and $3(b)$ or $5(a)$ and $5(b)$.
  \item $K_3 = K_2 \setminus O_3$.
  \item $O_4$ is the set of all points in $K$ with the following property. If $k \in O_4$ then there is an open neighborhood $V$ of $k$ such that the sets $cl \, \varphi^{(i)}(V), i \in \mathds{Z}$, are pairwise disjoint and $\rho(T_R) < 1/\rho(T^{-1}_L)$ where
      $L = \bigcap \limits_{n=1}^\infty cl \bigcup \limits_{i=n}^\infty \varphi^{(-i)}(V)$ and  $R = \bigcap \limits_{n=1}^\infty cl \bigcup \limits_{i=n}^\infty \varphi^{(i)}(V)$.
  \item $K_4 = K_3 \setminus O_4$.
  \item $O_5$ is defined similarly to $O_4$ but we require the inequality $\rho(T_L) < 1/\rho(T^{-1}_R)$.
  \item $K_5 = K_3 \setminus O_5$.
\end{itemize}

Let us also agree that if $\sigma$ is a subset of $\mathds{C}$ then $\sigma \Gamma = \{\lambda \gamma : \lambda \in \sigma, \gamma \in \Gamma\}$.

\begin{theorem} \label{t7} Let $T$ be an operator on $C(K)$ of form $(1)$. Then
\begin{itemize}
  \item $$\sigma_5(T) = \sigma(T_{K_2}) \Gamma \cup \Sigma. $$
  \item $$\sigma_3(T) = \sigma(T_{K_3}) \Gamma \cup \Sigma. $$
  \item $$\sigma_2(T) = \sigma(T_{K_4}) \Gamma \cup \Sigma $$.
  \item $$\sigma_1(T) = (\sigma(T_{K_4})\Gamma \cap \sigma(T_{K_5})\Gamma) \cup \Sigma$$.
\end{itemize}

\end{theorem}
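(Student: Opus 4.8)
The strategy is to translate each of the four identities into the pointwise conditions already obtained in Theorems~\ref{t3}, \ref{t4}, \ref{t5}, \ref{t6} and the localization results of Section~2, and then observe that the sets $K_2,\dots,K_5$ are precisely the ``remainders'' left after discarding the points (and their $\varphi$-orbits) that can contribute only finitely to the relevant defect/nullity. The rotation invariance of all $\sigma_i(T)$ (established in Theorem~\ref{t6}) is what forces the factor $\Gamma$ into every formula, so throughout one works with the moduli $|\lambda|$ and the inequalities $2(a),2(b),4(a),4(b)$, which are themselves modulus conditions. The term $\Sigma$ always appears because, by Remark~\ref{r3}(2), the closure of the set of $\lambda$ with $\lambda^p=w_p(k)$ for $k\in Int(\Pi^p)$ lies in $\sigma_1(T)\subseteq\sigma_i(T)$ for every $i$, and conversely such $\lambda$ are never removable by the finite-orbit surgery, so $\Sigma$ is a common summand.

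I would handle the four bullets in the order $\sigma_5,\sigma_3,\sigma_2,\sigma_1$, since each builds on the previous. For $\sigma_5$: by the characterization $(VB)$ in Theorem~\ref{t6}, $\lambda\in\sigma_5(T)\setminus\{0\}$ iff there is a non-isolated-$\varphi$-periodic point $k$ at which one of $2(a)\text{-}2(b)$ or $4(a)\text{-}4(b)$ holds; after removing the isolated $\varphi$-periodic points $O_1$ and the interiors $O_2$ of the non-isolated periodic sets, the remaining space is exactly $K_2$, and on $C(K_2)$ the spectrum $\sigma(T_{K_2})$ consists (by Lemma~\ref{l1} and Theorems~\ref{t1},\ref{t2}) of precisely those $|\lambda|$ realizing such inequalities on $K_2$; rotation invariance gives $\sigma(T_{K_2})\Gamma$, and adding back the $\Sigma$-part (which was thrown away with $O_2$) yields the identity. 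For $\sigma_3$: condition $(IIIB)$ says $\lambda\in\sigma_3(T)\setminus\{0\}$ iff there is a non-isolated $k$ with $2(a)\text{-}2(b)$ or $4(a)\text{-}4(b)$; the points that are isolated but satisfy $3(a)\text{-}3(b)$ or $5(a)\text{-}5(b)$ (the set $O_3$) are exactly the ones Theorem~\ref{t5} allows to be deleted without destroying Fredholmness, so passing to $K_3=K_2\setminus O_3$ and invoking $\sigma(T_{K_3})$ gives the formula, again with the $\Sigma$ correction. For $\sigma_2$: by $(IIB)$ only the inequalities $2(a)\text{-}2(b)$ (the ``approximate-point'', finite-nullity obstruction) matter; the additional points removable here are those governing the $O_4$-type inequality $\rho(T_R)<1/\rho(T^{-1}_L)$, i.e. points whose forward orbit is strictly contracting relative to the backward orbit, which by Lemma~\ref{l3} and Theorem~\ref{t3} contribute only finite nullity; hence $K_4=K_3\setminus O_4$ and $\sigma_2(T)=\sigma(T_{K_4})\Gamma\cup\Sigma$.

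Finally, for $\sigma_1$: by $(IB)$ one needs a non-isolated point with $2(a)\text{-}2(b)$ \emph{and} a (possibly different) non-isolated point with $4(a)\text{-}4(b)$. The first requirement is, by the $\sigma_2$-analysis, $\lambda\in\sigma(T_{K_4})\Gamma$; the second, by the symmetric argument with $\tilde T$ (Remark~\ref{r1}, Lemma~\ref{l4}) and the $O_5$-inequality $\rho(T_L)<1/\rho(T^{-1}_R)$, is $\lambda\in\sigma(T_{K_5})\Gamma$. Since $\sigma_1(T)$ requires \emph{both} obstructions simultaneously, one intersects the two sets and adds $\Sigma$ (which sits in both). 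The main obstacle I anticipate is the careful bookkeeping in the last step: verifying that the ``and'' in $(IB)$ really corresponds to the set-theoretic intersection $\sigma(T_{K_4})\Gamma\cap\sigma(T_{K_5})\Gamma$ rather than to a more subtle condition coupling the two points. One must check that a point witnessing $2(a)\text{-}2(b)$ survives in $K_4$ independently of whether a $4(a)\text{-}4(b)$-witness survives in $K_5$, i.e. that the two surgeries (removing $O_4$-orbits vs. removing $O_5$-orbits) do not interfere—this uses that $O_4$ and $O_5$ are defined by opposite strict inequalities, so a point cannot lie in both, together with the fact that deleting finitely many orbits changes neither $nul$ nor $def$ by an infinite amount. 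Granting that, the four identities follow by assembling the pointwise criteria of Theorem~\ref{t6} with the spectral-restriction computations of Theorems~\ref{t1},\ref{t2} and the equality $\sigma(T_U)\Gamma=\sigma(T_U)$-type rotation arguments from~\cite{Ki}.
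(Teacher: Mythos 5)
Your overall route -- deriving each identity by matching the pointwise characterizations of Theorem~\ref{t6} against the spectra of the restrictions $T_{K_j}$, isolating the periodic-interior contribution in $\Sigma$, and treating $\sigma_1$ as the intersection of the two one-sided conditions -- is exactly the derivation the paper has in mind (the paper offers no written proof, stating only that the theorem ``follows from the results in \cite{Ki} and in the previous section''), and the architecture of your argument is sound.

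One step is justified incorrectly, however, and you should repair it. In the $\sigma_2$ bullet you say that the points of $O_4$ (those with $\rho(T_R)<1/\rho(T_L^{-1})$) are removable because ``by Lemma~\ref{l3} and Theorem~\ref{t3}'' they ``contribute only finite nullity.'' This conflates the two opposite configurations. Lemma~\ref{l3} and Theorem~\ref{t3} concern \emph{isolated} kernel-generating points, and those satisfy $3(a)$--$3(b)$, i.e.\ $\sigma(T_R)\subset\{|\xi|>|\lambda|\}$ and $\sigma(T_L)\subset\{|\xi|<|\lambda|\}$, which is the $O_5$-type inequality $\rho(T_L)<1/\rho(T_R^{-1})$, not the $O_4$-type one; such points are already discarded with $O_3$. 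The correct reason $O_4$ may be deleted when computing $\sigma_2$ is different and stronger: a point satisfying both $2(a)$ and $2(b)$ for some $\lambda$ must have $1/\rho(T_L^{-1})\le|\lambda|\le\rho(T_R)$ (compare Birkhoff averages of $\ln|w|$ along the forward and backward orbits with the invariant measures on $R$ and $L$), so no point of $O_4$ can ever witness $(IIB)$ for any $\lambda$; moreover, on the orbit closure of a wandering neighborhood of an $O_4$-point the operator $\lambda I-T$ is left invertible for $|\lambda|$ in the gap, by condition $(C)$ of Theorem~\ref{t1}, hence contributes \emph{zero} nullity there. The symmetric statement (with $4(a)$--$4(b)$ and $O_5$) is what you need for the $K_5$ half of the $\sigma_1$ formula. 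With that correction your intersection argument for $\sigma_1$ goes through as you describe; note also that your observation that $O_4\cap O_5=\emptyset$ follows from $1/\rho(S^{-1})\le\rho(S)$, but it is not actually needed once each one-sided condition has been shown separately equivalent to membership in $\sigma(T_{K_4})\Gamma$, respectively $\sigma(T_{K_5})\Gamma$, modulo $\Sigma$.
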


\section{ Essential spectra of weighted $d$-isomorphisms of Banach lattices.}

In this section we will consider the essential spectra of an operator $T$ on a Banach lattice $X$ that allows the following representation.
$$ T = wU, \; w \in Z(X), \sigma(U) \subset \Gamma. \eqno(6)$$
\begin{remark} The question when every invertible disjointness preserving operator on a Banach lattice $X$ can be represented in form $(6)$ is of independent interest. In the case when $X$ is a $C(K)$ or an $L^p$-space , $1 \leq p \leq \infty$, the answer is positive and moreover $U$ can be chosen as an invertible isometry of $X$. The answer is also positive in the case when $X$ is a Banach function space and its upper and lower Boyd indices are both equal to $p$, $1 < p < \infty$, though in this case in general we cannot claim that $U$ is an isometry of $X$.
\end{remark}

We will assume first that $X$ is a Dedekind complete Banach lattice. Let $K$ be the Stonean compact space of $X$; then the center $Z(X)$ can be identified with $C(K)$. The map $f \rightarrow UfU^{-1}, \; f \in C(K)$ defines an isomorphism of the algebra $C(K)$. Let $\varphi$ be the corresponding homeomorphism of $K$ onto itself. We consider the weighted composition operator $S$ on $C(K)$ defined as
$$(Sf)(k) = w(k)f(\varphi(k)), \; f \in C(K), \; k \in K. \eqno{(7)}$$
Our nearest goal is to prove the following result.

\begin{theorem} \label{t8} Let $X$ be a Dedekind complete Banach lattice and $Z(X) \sim C(K)$ be the center of $X$. Let $T$ be an operator on $X$ defined by formula $(6)$ and $S$ be the corresponding operator on $C(K)$ defined by $(7)$. Then the essential spectra of operators $T$ and $S$ coincide.
$$\sigma_i(T) = \sigma_i(S), \; i=1, \ldots , 5 . $$

\end{theorem}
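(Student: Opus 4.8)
The plan is to reduce the essential spectra of $T = wU$ on $X$ to those of the weighted composition operator $S = wT_\varphi$ on $C(K)$ by constructing, for each $\lambda$, a Fredholm-type correspondence between $\lambda I - T$ and $\lambda I - S$ (more precisely, between certain powers or iterates of these operators). The key structural fact we exploit is that $Z(X) \cong C(K)$ with $K$ Stonean, so that $X$ is a Banach $C(K)$-module, and $U$ implements the homeomorphism $\varphi$ via $f \mapsto UfU^{-1}$. The first step is to pass to $\varphi$-invariant band decompositions: whenever $K = E \cup Q \cup F$ (or $K = E \cup Q \cup F \cup P$) is a $\varphi$-invariant partition into clopen-in-the-relevant-sense pieces as produced by Theorems \ref{t1}, \ref{t2}, and Remark \ref{r2}, the corresponding projections in $C(K) = Z(X)$ are band projections on $X$ commuting appropriately with $U$, so $X$ splits as a direct sum of $T$-invariant (or $T^m$-invariant) bands $X_E \oplus X_Q \oplus X_F$, and likewise $C(K)$ splits. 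Semi-Fredholmness and Fredholmness of $\lambda I - T$ are then equivalent to the conjunction of the corresponding statements on each summand, and the same holds for $S$; this lets us treat the "central" piece and the "periodic-points" piece separately.

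Second, I would establish the spectral radius matching: for a closed $\varphi$-invariant $Y \subseteq K$ and the associated band $X_Y$, one has $\rho(T_{X_Y}) = \rho(S_{C(Y)})$, and more importantly $\sigma(T_{X_Y})$ and $\sigma(S_{C(Y)})$ have the same intersection with any annulus, because both are governed by the same Floquet-type quantities $w_n(k)$ and the same $\varphi$-invariant measures $\mathcal{M}_\varphi$ on $Y$ (this is the mechanism behind Corollary \ref{c3}(4) and Theorem 3.23 of \cite{Ki}). In particular the conditions "$\sigma(T_E) \subset \{|\xi| < |\lambda|\}$" and "$\sigma(S_E) \subset \{|\xi| < |\lambda|\}$" are equivalent, and similarly for $F$; and on a clopen set $P$ of bounded-period periodic points, $\lambda \notin \sigma(T_P)$ iff $\lambda \notin \sigma(S_P)$ (here $\lambda^{p(i)} = w_{p(i)}(s_i)$ is the relevant scalar condition, visible identically on both sides). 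Combining this with the decomposition from the first step, the "$\lambda \in \sigma_r$" and "$\lambda \notin \sigma$" alternatives appearing in conditions $(5)$ of Theorems \ref{t3} and \ref{t4} transfer between $T$ and $S$.

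Third comes the heart of the matter: matching $nul(\lambda I - T)$ with $nul(\lambda I - S)$ and $def(\lambda I - T)$ with $def(\lambda I - S)$. By Lemma \ref{l2}, Corollary \ref{c1}, and Lemma \ref{l3}, on the $C(K)$ side a nontrivial kernel of $\lambda I - S$ comes only from a finite set $S = \{k_1,\dots,k_m,s_1,\dots,s_l\}$ of \emph{isolated} points of $K$, each isolated point corresponding to a clopen singleton, hence to a rank-one band projection in $Z(X)$; the restriction of $T$ to that one-dimensional band $X_{\{k\}} \cong \mathbb{C}$ (for periodic $k$) or to the band over the clopen $\varphi$-orbit behaves exactly like $S$ on $c_0(\mathbb{Z})$ or $\mathbb{C}$, because $U$ acts as a shift along the orbit and $w$ as multiplication by the same scalars $w(\varphi^{(i)}(k))$. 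Thus the kernel contributions on the two sides are in bijection, giving $nul(\lambda I - T) = nul(\lambda I - S)$; applying the same argument to the adjoints (or to the operator $\tilde T$ of $(\bigstar)$ via Remark \ref{r1}) gives $def(\lambda I - T) = def(\lambda I - S)$, and hence equality of indices. Feeding all this into Theorems \ref{t3}, \ref{t4}, \ref{t5} and then into the definitions of $\sigma_i$ yields $\sigma_i(T) = \sigma_i(S)$ for $i = 1,2,3,4$; for $i = 5$ one uses in addition that $\sigma_1(T) = \sigma_1(S)$ already matches the complementary components and that the resolvent sets of $T$ and $S$ meet the same components (a consequence of $\sigma(T_{X_Y}) \cap (\text{annulus}) = \sigma(S_{C(Y)}) \cap (\text{annulus})$ again). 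The case $\lambda = 0$ is handled separately by the analogue of Proposition \ref{p1}: $0 \in \sigma_i$ on either side iff the (common) zero set $Z = \{w = 0\}$ fails to be a finite clopen set of isolated points, which is a $Z(X)$-intrinsic condition.

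The main obstacle I anticipate is the third step, specifically verifying that an \emph{abstract} Banach lattice $X$ behaves, band by band over isolated points and over $\varphi$-invariant pieces, closely enough like $C(K)$ that the semi-Fredholm estimates (the disjoint-sequence constructions with vectors $v_n = \sum (1 - 1/\sqrt{m(n)})^{|j-m(n)|}\lambda^{-j}(T')^j u_n$ used throughout Section 2) can be reproduced in $X$ rather than in $C(K)$ or $C''(K)$. This requires knowing that the relevant approximate eigenvectors and the disjointness arguments (Corollary I.4.7 of \cite{EE}) survive the transfer; the cleanest route is probably to show directly that $\lambda I - T$ is semi-Fredholm on $X$ iff $\lambda I - S$ is semi-Fredholm on $C(K)$ by a perturbation/localization argument using the $C(K)$-module structure, rather than re-deriving the pointwise criteria, and then to read off $nul$, $def$, and $ind$ from the already-established $C(K)$ classification. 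The non-Dedekind-complete case is then disposed of by passing to the Dedekind completion $X^\delta$, noting that $w$ and $U$ extend, that the essential spectra are unchanged under this extension (the embedding $X \hookrightarrow X^\delta$ being a lattice and algebraic embedding with $X$ an ideal, so Fredholm data is preserved), and that $K$ is by definition the Stonean compact of $X^\delta$.
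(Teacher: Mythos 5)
Your overall strategy -- reduce to the pointwise criteria of Section 2, match kernels and cokernels through isolated points of $K$ (which, since $X$ is Dedekind complete, do correspond to one-dimensional atoms of $X$), and transfer the invariant partitions $K = E \cup Q \cup F \cup P$ to $T$-invariant band decompositions of $X$ -- is the same as the paper's, and your steps 1 and 3 track parts (3b)--(3d) of the actual proof fairly closely. But there are two places where you defer precisely the work that constitutes the proof.

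First, the adjoint side. You say ``applying the same argument to the adjoints (or to $\tilde T$ via Remark \ref{r1}) gives $def(\lambda I - T) = def(\lambda I - S)$,'' but this is not a symmetric situation: $T'$ acts on $X'$, whose ideal center is $C(K')$ for a \emph{different} (generally much larger) compact $K'$, and $S'$ acts on measures on $K$, not on a $C$-space at all. The paper has to construct the continuous surjection $\tau : K' \to K$ induced by $h \mapsto h'$ and verify the intertwining relation $\tau(\varphi'(u)) = \varphi^{(-1)}(\tau(u))$, then check that a non-isolated point $k_2 \in K$ satisfying inequalities $4(a)$--$4(b)$ lifts to a non-isolated point of $\tau^{-1}(k_2) \subset K'$ satisfying $2(a)$--$2(b)$ for $w'$ and $\varphi'$; only then does $1 \in \sigma_2(w'U')$ follow from the already-proved primal case, via the similarity $T' = U'(w'U')(U')^{-1}$. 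Your proposal contains no mechanism for relating the two compacts, and without it the deficiency half of every $\sigma_i$ computation is unproved.

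Second, you correctly identify the main obstacle -- reproducing the disjoint-sequence semi-Fredholm estimates in the abstract lattice $X$ rather than in $C(K)$ or $C''(K)$ -- but then propose to bypass it with an unspecified ``perturbation/localization argument using the $C(K)$-module structure.'' That bypass is exactly where the content lies. The paper's part (2) does the work explicitly: from a non-isolated point satisfying $2(a)$--$2(b)$ it extracts clopen sets $F_n$ with pairwise disjoint $\varphi$-iterates and controlled weights, picks norm-one $x_n$ in the corresponding bands, and shows $\|Ty_n - y_n\| = o(\|y_n\|)$ for $y_n = \sum_{i=0}^{2n}(1 - 1/\sqrt{n})^{|n-i|}T^i x_n$, using that the $y_n$ are pairwise disjoint so that no subsequence converges (Corollary I.4.7 of \cite{EE}); a parallel construction is needed in $X'$ for the $\sigma_2(T')$ half. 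These constructions do go through in $X$ because disjointness and the action of $Z(X)$ behave as in $C(K)$, but that has to be shown, not asserted. (Your closing remarks about the Dedekind completion are not relevant to this theorem, which already assumes $X$ Dedekind complete; that extension is Theorems \ref{t10} and \ref{t9}, and note that $X$ is order dense but not an ideal in $\hat X$.)
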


\begin{proof} $(1)$ In this part we will prove that $0 \in \sigma_i(T) \Leftrightarrow 0 \in \sigma_i(S), i = 1, \ldots , 5$. Assume that $0 \in \sigma(T)$ then obviously $0 \in \sigma(S)$. Assume that $0 \not \in \sigma_1(S)$, then by Proposition~\ref{p1} the set $Z(w)$ of zeros of $w$ is finite and consists of points isolated in $K$. Let $\chi$ be the characteristic function of the set $Z(w)$, then the operator $\chi U$ is finite dimensional while the operator $T + \chi U$ is invertible on $X$. Therefore $T$ is a Fredholm operator. Notice that $null(T) = null(S) = card(Z(w))$. The operator $w^\prime$ is a central operator on the Banach dual $X^\prime$ of $X$ and because the operator $T^\prime + U^\prime \chi^\prime$ is invertible on $X^\prime$ we see that $null(T^\prime) = card(Z(w^\prime)) \leq card(Z(w)) = null(T)$. On the other hand a similar reasoning shows that $null(T) = null(T^{\prime \prime}) \leq null(T^\prime)$ whence $ind(T) = 0$.

Now assume that $0 \in \sigma_1(S)$. By proposition~\ref{p1} the set $Z(w)$ contains a point that is not isolated in $K$. Then we can construct a sequence of pairwise disjoint elements $x_n \in X$ such that $\|x_n\| = 1$ and $\chi_n x_n = x_n$ where $\chi_n$ is the characteristic function of the set $\{k \in K : \; |w(k)| \leq 1/n \}$. Then it is immediate to see that $Tx_n \mathop \rightarrow \limits_{n \to \infty} 0$ whence $0 \in \sigma_2(T)$. But the set $Z(w^\prime)$  must also contain a point that is not isolated in $K^\prime$ where $K^\prime$ is the Gelfand compact of the ideal center of $X^\prime$. Indeed, otherwise the sum of $T^\prime$ and a finite dimensional operator would be invertible and $T$ (together with $T^\prime$) would be Fredholm in contradiction with our assumption. Thus $0 \in \sigma_2(T^\prime)$ whence $0 \in \sigma_1(T)$.

It follows from the two previous paragraphs that
$$ 0 \in \sigma_i(T) \Leftrightarrow 0 \in \sigma_i(S), i = 1,2,3,4.  \eqno{(8)}$$
The equivalence $0 \in \sigma_5(T) \Leftrightarrow 0 \in \sigma_5(S)$ follows from Definition~\ref{d3}, from $(8)$, and from the equality
$\sigma(T) = \sigma(S)$ proved in ~\cite[Theorem 22]{Ki1}

$(2)$ Here we will prove that $\sigma_1(S) \subseteq \sigma_1(T)$. Let $\lambda \in \sigma_1(S) \setminus \{0\}$. Without loss of generality we can assume that $\lambda = 1$. Then it follows from Theorems~\ref{t6} and~\ref{t7} as well as from the Frolik's theorem~\cite[Theorem 6.25, p. 150]{Wa} \footnote{The Frolik's theorem states that if $K$ is an extremally disconnected compact space and $\varphi$ is a homeomorphism of $K$ into itself then the set of all fixed points of $\varphi$ is clopen in $K$.} that we have to consider two possibilities.

\noindent $(2a)$ There are a point $k \in K$ and $p \in \mathds{N}$ such that $k \in Int(\Pi^p)$ and $ w_p(k)= 1$. We can find clopen nonempty subsets $E_n, n \in \mathds{N}$ of $Int(\Pi^p)$ such that
\begin{itemize}
  \item $\varphi^{(i)}(E_n) \cap E_n = \emptyset$, $n \in \mathds{N}$, $1 \leq i \leq p-1 $.
  \item $\varphi^{(p)}(E_n) = E_n, n \in \mathds{N}$.
  \item If $m \neq n$ then $\varphi^{(i)} (E_m) \cap \varphi^{(j)} (E_n) = \emptyset, 0 \leq i,j \leq p-1$.
  \item $\max \limits_{k \in \bigcup \limits_{i=0}^{p-1} \varphi^{(i)}(E_n)} |w_p(k) - 1| < 1/n , n \in \mathds{N}$.
\end{itemize}
 Let $P_n$ be the band projection on $X$ corresponding to the set $E_n$ and $x_n \in X$ be such that $\|x_n\| = 1$ and $P_n x_n = x_n$. Let
 $y_n = \sum \limits_{i=0}^{p-1} T^i x_n, n \in \mathds{N}$. Then obviously the elements $y_n$ are pairwise disjoint in $X$, whence $\|y_n\| \geq \|x_n\| = 1$, and a simple estimate (see also~\cite{Ki1}) shows that $Ty_n - y_n \mathop \rightarrow \limits_{n \to \infty} 0$. Thus $1 \in \sigma_2(T)$.

 Next, consider the band $C_n$ in $X$ corresponding to the clopen set $\bigcup \limits_{i=0}^{p-1} \varphi^{(i)}(E_n)$. The band $C_n$ as well as its complementary band are $T$-invariant whence the band $C_n^\prime$ (the Banach dual of $C_n$) in $X^\prime$  is $T^\prime$-invariant. The restriction of the operator $T^p$ on $C_n$ is a central operator on $C_n$ whence $(T^\prime)^p$ is central on $C_n^\prime$. Moreover,
 $\|Q_n^\prime( (T^\prime)^p - I)\| = \|Q_n(T^p - I)\| < 1/n, n \in \mathds{N}$, where $Q_n$ is the band projection on the band $C_n$. Like in the previous paragraph we can construct a sequence $f_n$ of pairwise disjoint elements of $X^\prime$ such that $\|f_n\| = 1$ and $T^\prime f_n - f_n \rightarrow 0$. Therefore $1 \in \sigma_2(T) \cap \sigma_2(T^\prime)= \sigma_1(T)$.

 $(2b)$. There are two non $\varphi$-periodic and non-isolated points $k_1, k_2 \in K$ (it can happen that $k_1 = k_2)$ such that at $k_1$ we have inequalities 2(a) and 2(b) and at $k_2$ - inequalities 4(a) and 4(b) with $\lambda = 1$. Because $k_1$ is a non-isolated and non $\varphi$-periodic point we can find clopen subsets $F_n$ of $K$ such that
 \begin{itemize}
   \item $\varphi^{(i)}(F_n) \cap \varphi^{(j)}(F_n) = \emptyset, -(n+1) \leq i < j \leq n+1$.
   \item If $n \neq m$ then $\big{(} \bigcup \limits_{i=-m-1}^{m+1} \varphi^{(i)}(E_m) \big{)} \cap \big{(} \bigcup \limits_{i=-n-1}^{n+1} \varphi^{(i)}(E_n) \big{)} = \emptyset$.
   \item for any $k \in F_n$ we have $|w_i(k)| \geq 1/2, 0\leq i \leq n+1$ and $|w_i(\varphi^{(-i)}(k)| \leq 2, 1 \leq i \leq n+1$.
 \end{itemize}
Let $x_n$ be an element of the band corresponding to the set $\varphi^{(n)}(F_n)$ such that $\|x_n\| = 1$ and let
$$ y_n = \sum \limits_{i = 0}^{2n} \big{(} 1 - \frac{1}{\sqrt{n}}\big{)}^{|n - i|} T^i x_n .$$
Then the estimates very similar to the ones employed in~\cite{Ki1} show that $\|Ty_n - y_n\| = o(\|y_n\|), n \to \infty$ whence
$1 \in \sigma_2(T)$.

Let us prove now that $1 \in \sigma_2(T^\prime)$. The map $h \rightarrow h^\prime, h \in Z(X)$ defines an isometric isomorphism of $Z(X)$ (which we identify with $C(K)$) onto a subalgebra of $Z(X^\prime)$ (which we identify with $C(K^\prime)$). To this isometric embedding corresponds a continuous surjection $\tau : K^\prime \rightarrow K$, and it is not difficult to see that if $h \in C(K)$, $u,v \in K^\prime$, and $\tau(u) = \tau(v)$ then
$h^\prime(u) = h^\prime(v)$. The map $g \rightarrow (U^{-1})^\prime g U^\prime$ defines an isomorphism of $C(K^\prime)$. Let $\varphi^\prime$ be the homeomorphism of $K^\prime$ onto itself corresponding to this isomorphism. Then it is immediate to see that
$$\tau(\varphi^\prime(u)) = \varphi^{(-1)}(\tau(u)). \eqno{(9)} $$
Let $s \in \tau^{-1}(k_2)$. Let $w_n^\prime(u) = w^\prime(u) w^\prime(\varphi^\prime(u)) \ldots w^\prime((\varphi^\prime)^{(n)}(u), n \in \mathds{N}$. It follows from the fact that at $k_2$ we have inequalities (4a) and (4b) and from $(9)$ that
$$|w^\prime_n(s)| \geq 1 \; n \in \mathds{N} $$
and
$$|w^\prime_n((\varphi^\prime)^{(-n)}(s))| \leq 1 , \; n \in \mathds{N}.$$
It is obvious that $s$ cannot be a $\varphi^\prime$-periodic point in $K^\prime$. Moreover, it is not difficult to see that because $k_2$ is not an isolated point in $K$ the set $\tau^{-1}(k_2)$ must contain points that are not isolated in $K^\prime$. Thus, by what we have already proved
$1 \in \sigma_2(w^\prime U^\prime)$. It remains to notice that $(T)^\prime = U^\prime w^\prime = U^\prime (w^\prime U^\prime) (U^\prime)^{-1}$ whence $1 \in \sigma_2(T^\prime)$.

$(3)$ In this part we prove that $\sigma_1(T) \subseteq \sigma_1(S)$. Let $1 \not \in \sigma_1(S)$. Because by Theorem 22 from~\cite{Ki1}
$\sigma(T) = \sigma(S)$ we can assume without loss of generality that $1 \in \sigma(S)$. We have to consider several possibilities.

\noindent $(3a)$ The operator $I - S$ is semi-Fredholm and $null(I-T) = 0$. In other words we assume that $1 \in \sigma_r(S)$. But by Theorems 20 and 22 from~\cite{Ki1} $\sigma_r(T) = \sigma_r(S)$ whence $1 \not \in \sigma_1(T)$.

\noindent $(3b)$  The operator $I - S$ is semi-Fredholm and $0 < null(I-T) < \infty$. Then conditions II(1) - II(5) of Theorem~\ref{t3} are satisfied. We will keep the notations from the statement of Theorem~\ref{t3}. Then $cl U$ is a clopen $\varphi$-invariant subset of $K$. Let $X_1$ and $X_2$ be the bands in $X$ corresponding to the clopen sets $cl U$ and $K \setminus cl U$, respectively. Clearly $TX_i \subseteq X_i, i=1,2$. Conditions II(1) - II(3) of Theorem~\ref{t3} combined with theorems 20 and 21 from~\cite{Ki1} guarantee that the operator $(I-T)|X_1$ is Fredholm while condition II(5) together with the same theorems implies that $1 \in \sigma_r(T|X_2)$. Thus the operator $(I-T)$ is semi-Fredholm and
$ind(I-T) = ind(I-S)$.

\noindent $(3c)$ The operator $I - S$ is semi-Fredholm and $def(I - S) = 0$. It follows from Theorem~\ref{t2} and from the Frolik's theorem that $K$ can be partitioned as $K = E \cup Q \cup F \cup P$ where $P$ is a clopen $\varphi$-invariant subset of $K$ and there is an $m \in \mathds{N}$ such that $P = \bigcup \limits_{i=1}^m \Pi^p$, while the sets $E$, $F$, and $Q$ satisfy conditions $A$ and $B$ of Theorem~\ref{t1}. Let $K^\prime$, $\tau$, and $\varphi^\prime$ be as in part $(2b)$ of the proof. Let $P^\prime$, $E^\prime$, $F^\prime$, and $Q^\prime$ be the $\tau$-preimages in $K^\prime$ of the corresponding sets in $K$. Then it is easy to see that $P^\prime$ is a clopen $\varphi^\prime$-invariant subset of $K^\prime$ and the sets $E^\prime$, $F^\prime$, and $Q^\prime$ satisfy conditions $A$ and $C$ of Theorem~\ref{t1} (Of course, we have to substitute $K$ by $K^\prime$, $w$ by $w^\prime$, and $\varphi$ by $\varphi^\prime$, respectively). Let $\tilde{S}$ be the operator on $C(K^\prime)$ defined as
$$(\tilde{S})f(s) = w^\prime(s) f(\varphi^\prime(s)), \; f \in C(K^\prime), \; s \in K^\prime .$$
 By Theorem~\ref{t2} $1 \in \sigma_r(\tilde{S})$, and then by part $(3a)$ of the proof we have $1 \in \sigma_r(T^\prime)$ whence
 $(I-T)X = X$.

 \noindent $(3(d)$ The remaining case when $I - S$ is semi-Fredholm and $0 < def(I-T) < \infty$ can be considered similarly to part $(3b)$ of the proof by using part $(3c)$ and Theorem~\ref{t4}.

 Thus we have proved that $\sigma_1(T) = \sigma_1(S)$.

 $(4)$ The arguments applied in parts $(2)$ and $(3)$ of the proof show immediately that $\sigma_i(T) = \sigma_i(S), i=2,3,4$. Finally, the equality
$\sigma_5(T) = \sigma_5(S)$ follows from $\sigma_4(T) = \sigma_4(S)$ and from $\sigma(T) = \sigma(S)$ (~\cite[Theorem 22]{Ki1}).

\end{proof}

\begin{corollary} \label{c4} Let $T$ be an operator of form (6) on a Dedekind complete Banach lattice $X$. Then

$(1)$ If the operator $T$ is band irreducible then $\sigma_1(T) = \sigma(T)$.

$(2)$ If the Banach lattice $X$ has no atoms then $\sigma_3(T) = \sigma(T)$.

\end{corollary}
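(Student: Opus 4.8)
The plan is to reduce everything to the $C(K)$ situation via Theorem~\ref{t8}, which identifies $\sigma_i(T)$ with $\sigma_i(S)$ for the weighted composition operator $S$ of $(7)$, and then to apply the corresponding parts of Corollary~\ref{c3} together with the equality $\sigma(T)=\sigma(S)$ from~\cite[Theorem 22]{Ki1}. Thus the only substantive work is to translate the two hypotheses, ``$T$ is band irreducible'' and ``$X$ has no atoms,'' into the hypotheses ``$S$ is band irreducible'' and ``$K$ has no isolated points'' that are required by Corollary~\ref{c3}(1) and Corollary~\ref{c3}(3).

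For part $(1)$ I would first recall the standard dictionary: under the identification $Z(X)\sim C(K)$ the bands of the Dedekind complete lattice $X$ are in order-preserving bijection with the clopen subsets of $K$, the band $B_E$ being the range of the band projection $\chi_E\in C(K)$. Since $w\in Z(X)$ leaves every band invariant and $U$ is a $d$-isomorphism that induces the homeomorphism $\varphi$ of $K$, one checks in the usual way that $B_E$ reduces $T$ if and only if $\varphi(E)=E$. Hence $T$ is band irreducible precisely when $\varphi$ admits no proper nonempty invariant clopen set, i.e. precisely when $S$ is band irreducible on $C(K)$. Corollary~\ref{c3}(1) then gives $\sigma_1(S)=\sigma(S)$, and Theorem~\ref{t8} together with $\sigma(T)=\sigma(S)$ yields $\sigma_1(T)=\sigma_1(S)=\sigma(S)=\sigma(T)$.

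For part $(2)$ I would use that the atoms of the Dedekind complete Banach lattice $X$ correspond exactly to the isolated points of $K$: if $k$ is isolated then $\chi_{\{k\}}X$ is a band that is again Dedekind complete and has center $\chi_{\{k\}}C(K)\cong\mathds{C}$, hence is one--dimensional, i.e. an atom; conversely an atom spans a one--dimensional band whose band projection is a minimal idempotent of $C(K)$, that is $\chi_{\{k\}}$ for an isolated point $k$. So ``$X$ has no atoms'' is equivalent to ``$K$ has no isolated points,'' Corollary~\ref{c3}(3) gives $\sigma_3(S)=\sigma(S)$, and one more application of Theorem~\ref{t8} and $\sigma(T)=\sigma(S)$ gives $\sigma_3(T)=\sigma_3(S)=\sigma(S)=\sigma(T)$. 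There is no genuine obstacle here; the only point that deserves care is keeping the conventions of Theorem~\ref{t8} straight in the two translation steps, in particular the equivalence ``$B_E$ reduces $T$ $\iff$ $\varphi(E)=E$'' (which uses that $w$ is central) and the atom/isolated-point correspondence, both of which are routine once the setup of Section~4 is fixed.
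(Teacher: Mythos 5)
Your proposal is correct and follows exactly the route the paper intends: the corollary is stated without proof as an immediate consequence of Theorem~\ref{t8} (giving $\sigma_i(T)=\sigma_i(S)$), the equality $\sigma(T)=\sigma(S)$ valid in the Dedekind complete case, and Corollary~\ref{c3}(1),(3), with the band$\leftrightarrow$clopen-set and atom$\leftrightarrow$isolated-point dictionaries supplying the translation of hypotheses. The only nitpick is that ``$B_E$ is $T$-invariant iff $\varphi(E)=E$'' is slightly imprecise when $w$ has zeros, but since the identical characterization of invariant bands holds for $S$, the equivalence of the two irreducibility hypotheses -- which is all you use -- is unaffected.
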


Our next goal is to extend the result of Theorem~\ref{t8} on arbitrary Banach lattices.

Let $X$ be a Banach lattice, $w \in Z(X)$, and $U$ be a $d$-isomorphism on $X$ such that $\sigma(U) \subseteq \Gamma$. Because the operators $w$ and $U$ are order continuous, by Veksler's theorem~\cite{Ve} (see also~\cite[Theorem 1.65, page 55]{AB}, or~\cite[Lemma 140.1, page 651]{Za} ) they have unique order continuous extensions $\hat{w}$ and $\hat{U}$ to the Dedekind completion $\hat{X}$ of $X$. It is easy to see that $\hat{w} \in Z(\hat{X})$, that $\hat{U}$ is a $d$-isomorphism of $\hat{X}$, and that
$\sigma(\hat{U}) \subseteq \Gamma$. Let $\hat{T} = \hat{w} \hat{U}$. Like in Theorem~\ref{t8} we can consider operator $S$ associated with $\hat{T}$ and defined by $(7)$ where the compact space $\hat{K}$ is the Gelfand compact of $Z(\hat{X})$ and $\hat{\varphi}$ is the homeomorphism of $\hat{K}$ induced by the map $f \to UfU^{-1}, f \in C(\hat{K})$. We will prove below in Theorem~\ref{t9} that
$$\sigma_i(T) = \sigma_i(\hat{T}), i= 1, \ldots, 5 . \eqno{(10)}$$

We start with the following special case of $(10)$

\begin{proposition} \label{p2} Let $K$ be a compact Hausdorff space and $\hat{K}$ be the absolute (or Stonean compact) of $K$. Let $\varphi$ be a homeomorphism of $K$ onto itself, $w \in C(K)$, and $(Tf)(k) = w(k)f(\varphi(k)), k \in K, f \in C(K)$. Let $\hat{T}$ be the unique order continuous extension of $T$ onto $\widehat{C(K)} = C(\hat{K})$. Then $\sigma_i(T) = \sigma_i(\hat{T}), i=1, \ldots, 5$.

\end{proposition}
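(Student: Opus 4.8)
The plan is to exploit the canonical irreducible continuous surjection $\pi\colon\hat K\to K$ attached to the absolute. The homeomorphism $\varphi$ lifts uniquely to a homeomorphism $\hat\varphi$ of $\hat K$ with $\pi\circ\hat\varphi=\varphi\circ\pi$, and the order continuous extension $\hat T$ is again a weighted composition operator of form $(1)$: the extension of the multiplication part of $T$ is multiplication by $\hat w:=w\circ\pi$, and the extension of the composition part $f\mapsto f\circ\varphi$ is an invertible isometric lattice automorphism of $C(\hat K)$, hence of the form $g\mapsto g\circ\hat\varphi$. Thus $\hat w_n=w_n\circ\pi$ and $\pi\circ\hat\varphi^{(n)}=\varphi^{(n)}\circ\pi$ for all $n\in\mathds{Z}$, so \emph{all} results of Sections 2 and 3 apply verbatim to $\hat T$; the whole content of the proposition is that the dynamical data attached to $(K,\varphi,w)$ and to $(\hat K,\hat\varphi,\hat w)$ yield the same essential spectra, and I would prove this through a dictionary between the two.

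The dictionary I would establish: (i) since the atoms of $\widehat{C(K)}$ coincide with those of $C(K)$, a point $\hat k$ is isolated in $\hat K$ iff $\pi(\hat k)$ is isolated in $K$ and $\pi^{-1}(\pi(\hat k))=\{\hat k\}$; in particular a fibre over a non-isolated point contains no isolated point of $\hat K$. (ii) Because $\hat w_n=w_n\circ\pi$ and $\pi$ intertwines iterates, $\hat k$ satisfies $2(a),2(b)$ (respectively $4(a),4(b)$) for $\hat T$ iff $\pi(\hat k)$ satisfies them for $T$; as $\pi$ is continuous and closed it carries the forward/backward asymptotic sets $R_i,L_i$ of an orbit onto those of its image, and $\pi$ of a non-empty open set has non-empty interior (irreducibility), so conditions $3(a),3(b),5(a),5(b)$ transfer as well. (iii) If $C\subseteq K$ is compact and $\varphi^{(p)}|_C=\mathrm{id}_C$, then $\hat\varphi^{(p)}|_{\pi^{-1}(C)}=\mathrm{id}$, because $\pi^{-1}(C)$ is again the absolute of $C$ and an order continuous automorphism of $C(\pi^{-1}(C))$ fixing the order dense subalgebra $C(C)$ is the identity. (iv) By Frolik's theorem~\cite[Theorem 6.25]{Wa} each set $\mathrm{Fix}(\hat\varphi^{(p)})$ is clopen in $\hat K$, so the sets of $\hat\varphi$-periodic points of a prescribed period are clopen; combined with (iii) and irreducibility this identifies, after translation by $\pi$, the sets $\Pi^p$, $O_i$, $K_i$ and the set $\Sigma$ of Remark~\ref{r3} and Theorem~\ref{t7} for $\hat T$ with the corresponding objects for $T$.

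With the dictionary the argument assembles as follows. For $\lambda=0$: by Proposition~\ref{p1}, $0\notin\sigma_1(T)$ iff $\{w=0\}$ is finite and consists of isolated points, which by (i) is equivalent to $\{\hat w=0\}=\pi^{-1}(\{w=0\})$ being finite and consisting of isolated points, i.e.\ to $0\notin\sigma_1(\hat T)$; Proposition~\ref{p1} then yields the equivalence of $0\in\sigma_i$ for $i=1,\dots,4$, and for $i=5$ one uses in addition $\sigma(T)=\sigma(\hat T)$ (\cite[Theorem 22]{Ki1}) and Definition~\ref{d3}. For $\lambda\neq0$ I would compare the descriptions of $\sigma_1,\sigma_2,\sigma_3,\sigma_5$ in Theorem~\ref{t7} term by term: by (i)--(iv) the closed $\varphi$-invariant sets $K_i\subseteq K$ correspond to the closed $\hat\varphi$-invariant sets $\hat K_i\subseteq\hat K$ with $\sigma(T_{K_i})=\sigma(\hat T_{\hat K_i})$ (a case of \cite[Theorem 22]{Ki1} for the restricted operators), and $\Sigma=\hat\Sigma$; since the circle factor $\Gamma$ is common, all four agree. (Alternatively, apply Theorem~\ref{t6} where $Int(\Pi^p)=\emptyset$, peeling off the interior-periodic part otherwise, and verify that the criteria $(IB),(IIB),(IIIB),(VB)$ hold for $T$ iff for $\hat T$.) Finally, once $\sigma_3(T)=\sigma_3(\hat T)$ is known, Theorem~\ref{t5} shows that for $\lambda\notin\sigma_3$ one has $\lambda\notin\sigma_4$ iff the numbers of isolated non-$\varphi$-periodic points of the two admissible types are equal; by (i)--(ii) these finite sets correspond bijectively under $\pi$, so $\lambda\in\sigma_4(T)\Leftrightarrow\lambda\in\sigma_4(\hat T)$.

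The main obstacle is step (iv) together with the equality $\Sigma=\hat\Sigma$. On the extremally disconnected space $\hat K$ every $\hat\varphi$-periodic-point set is clopen, so $Int(\hat\Pi^p)=\hat\Pi^p$ and $\hat\Sigma$ is built from \emph{all} periodic points of $\hat K_1$, while on $K$ only genuine interior-periodic points contribute, and a fibre over a non-isolated $\varphi$-periodic point may split into $\hat\varphi$-periodic and non-$\hat\varphi$-periodic pieces, so there is no naive bijection of periodic points. The inclusion $\Sigma\subseteq\hat\Sigma$ follows from (iii), which forces $\pi^{-1}(Int(\Pi^p))\subseteq\hat\Pi^p$ with matching values of $w_p$; conversely, if $\hat k\in\hat\Pi^p$ with $\hat w_p(\hat k)=\zeta^p$, then by irreducibility the image of a clopen neighbourhood of $\hat k$ on which $\hat\varphi^{(p)}=\mathrm{id}$ has non-empty interior $W\subseteq K$ consisting of $\varphi$-periodic points whose periods divide $p$, and a density argument inside $W$ (noting that $w_p$ is constant along the relevant fibres) places $\zeta$ in $\Sigma$. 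Frolik's theorem is exactly what makes this reconciliation possible, and it is the only genuinely delicate point; the non-isolated non-periodic case is routine, since by (i)--(ii) a non-isolated point of $K$ carrying a given asymptotic behaviour of $w_n$ has a non-isolated preimage carrying the same behaviour, and vice versa.
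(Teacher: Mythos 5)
Your proposal is correct and follows essentially the same route as the paper: the paper's proof simply records the intertwining relation $\varphi\circ\tau=\tau\circ\hat\varphi$ and asserts that the equalities follow from it together with Theorem~\ref{t6} and Proposition~\ref{p1}, which is exactly the dictionary you build (your Theorem~\ref{t7} variant and the discussion of $\Sigma$ via Frolik's theorem supply details the paper leaves implicit). No substantive difference in method.
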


\begin{proof} $C(K)$ is isometrically and algebraically embedded into $C(\hat{K})$. Let $\tau$ be the surjection of $\hat{K}$ onto $K$ induced by this embedding.  The operator $\hat{T}$ is of the form $(\hat{T}f)(k) = \hat{w}(k)f(\hat{\varphi}(k)), k \in \hat{K}, f \in C(\hat{K})$ and we have that
$$\varphi(\tau(k)) = \tau(\hat{\varphi}(k)), k \in \hat{K}. \eqno{(11)}$$
 The equalities  $\sigma_i(T) = \sigma_i(\hat{T}), i= 1, \ldots 5$ follow easily from $(11)$, Theorem~\ref{t6}, and Proposition~\ref{p1}.

\end{proof}

Our next step is to prove that $(10)$ holds in the case of Banach lattices with a quasi-interior point

Let us recall that a point $u$ in a Banach lattice $X$ is called \textbf{quasi-interior} if the principal ideal $X_u$ is dense in $X$.

\begin{lemma} \label{la1} Let $X$ be a Banach lattice with a quasi-interior point $u$. Let $Z(X) = C(K)$ and $Z(\hat{X}) = C(K_1)$ be the ideal centers of $X$ and its Dedekind completion $\hat{X}$, respectively. Then $C(K_1)$ is isometrically and lattice isomorphic to $\widehat{C(K)} = C(\hat{K})$ where $\hat{K}$ is the absolute of $K$.

\end{lemma}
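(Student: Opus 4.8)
The plan is to construct the isometric lattice isomorphism between $C(K_1)$ and $C(\hat K)$ directly, by identifying $Z(\hat X)$ with an abstract object that both spaces represent. First I would recall that since $u$ is a quasi-interior point of $X$, it is also a quasi-interior point of the Dedekind completion $\hat X$ (because $X$ is order dense in $\hat X$ and order continuous operations preserve the density of the principal ideal). Consequently the center $Z(\hat X)$, acting on the principal ideal $\hat X_u$, can be represented as $C(K_1)$ where $K_1$ is the Gelfand space of this commutative $C^*$-algebra (a Stonean space, since $\hat X$ is Dedekind complete and so is $Z(\hat X)$). The restriction map $Z(\hat X)\to Z(X)$ given by $\pi\mapsto \pi|_X$ (which makes sense because $X$ is invariant under any central operator on $\hat X$ — central operators are order continuous and leave order-dense ideals invariant) is an injective algebra and lattice homomorphism, hence corresponds to a continuous surjection $\tau\colon K_1\to K$.

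Next I would show that this surjection $\tau$ is irreducible, i.e.\ no proper closed subset of $K_1$ maps onto $K$; equivalently, $\tau$ carries no nontrivial information loss, which is exactly the condition characterizing the absolute (Gleason cover) of $K$. To see irreducibility, suppose $F\subsetneq K_1$ is closed with $\tau(F)=K$; pick a nonzero $\pi\in C(K_1)$ vanishing on $F$. Then $\pi$ is a central operator on $\hat X$ whose action on $X$ is nonzero (since the principal ideal generated in $\hat X$ by $\pi\hat X_u$ is order dense and meets $X$), yet by the way band projections in $Z(\hat X)$ are built from band projections in $Z(X)$ one derives that $\pi|_X$ is supported on $\tau(K_1\setminus F)^c$'s complement — a contradiction with $\tau(F)=K$. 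More cleanly: every band projection in $\hat X$ is the supremum of an upward-directed family of band projections coming from $X$ (order density plus Dedekind completeness), so the Boolean algebra of clopen subsets of $K_1$ is the Dedekind completion of the Boolean algebra of regular open subsets of $K$, which is precisely the defining property of the absolute $\hat K$. Since a Stonean space is determined by its Boolean algebra of clopen sets (Stone duality), $K_1\cong \hat K$, and this homeomorphism implements the desired isometric lattice isomorphism $C(K_1)\cong C(\hat K)=\widehat{C(K)}$.

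The main obstacle I anticipate is the identification of the Boolean algebra of clopen sets of $K_1$ with the Dedekind completion of the regular-open algebra of $K$ — that is, proving that every band projection on $\hat X$ arises as the order-supremum of band projections that restrict to band projections on $X$, and that distinct band projections on $\hat X$ restrict to distinct (central) operators on $X$. The first part follows from the fact that $X$ is order dense in $\hat X$ so that bands of $\hat X$ are generated by their traces on $X$; the second part requires that a nonzero band in $\hat X$ meets $X$ nontrivially, again a consequence of order density. Once the Boolean algebras are matched, the rest is formal via Stone duality and the Gelfand representation, using that the center of a Dedekind complete Banach lattice with a quasi-interior (hence weak order) unit is an $AM$-space with unit, represented on its (Stonean) Gelfand space. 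I would also invoke Veksler's theorem, as the paper already does, to guarantee that central operators extend uniquely and order-continuously, which is what makes the restriction map $Z(\hat X)\to Z(X)$ well behaved in the reverse direction.
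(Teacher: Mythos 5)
Your route---build a continuous surjection $\tau\colon K_1\to K$, prove it irreducible, and invoke the characterization of the absolute as the extremally disconnected irreducible preimage---is genuinely different from the paper's, which simply chains the identifications $\widehat{Z(X)}\equiv\widehat{X_u}\equiv\hat{X}_u\equiv Z(\hat{X})$, using that the center of a Dedekind complete lattice with weak order unit $u$ is the principal ideal generated by $u$ and that $\widehat{X_u}$ may be identified with $\hat{X}_u$. However, your version has a genuine gap at its starting point: the ``restriction map'' $\pi\mapsto\pi|_X$ from $Z(\hat{X})$ to $Z(X)$ is not well defined. $X$ sits inside $\hat{X}$ as an order dense, majorizing sublattice but \emph{not} as an ideal (if it were an order dense ideal it would equal $\hat{X}$), so a central operator on $\hat{X}$ need not leave $X$ invariant; already for $X=C[0,1]$ inside $\hat{X}=C(\hat{K})$, multiplication by a discontinuous element of $C(\hat{K})$ takes continuous functions out of $C[0,1]$. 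The map that does exist goes the other way: the Veksler extension gives an isometric unital embedding $Z(X)\hookrightarrow Z(\hat{X})$, i.e.\ $C(K)\hookrightarrow C(K_1)$, and it is \emph{that} embedding whose Gelfand dual is a continuous surjection $K_1\to K$. (Your duality is also reversed: an injective homomorphism $C(K_1)\to C(K)$ would produce a continuous map $K\to K_1$ with dense image, not a surjection $K_1\to K$.)

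Even after repairing the direction of the map, the Boolean-algebra step is where the real content sits, and your argument for it fails: the claim that ``every band projection in $\hat{X}$ is the supremum of band projections coming from $X$'' is vacuous when $X$ admits no nontrivial band projections (again $C[0,1]$). What order density does give is an isomorphism between the band lattices of $X$ and of $\hat{X}$; the substantive point is then to identify $\mathrm{Band}(X)$ with $\mathrm{RO}(K)$, which amounts to showing that $Z(X)u$ is order dense in the principal ideal $X_u$. That is precisely the content of the paper's identification $\widehat{Z(X)}\equiv\widehat{X_u}$, and your proposal assumes it rather than proves it; your first irreducibility argument likewise presupposes that a nonzero element of $C(K_1)$ acts nontrivially on $X$ ``with the right support,'' which is the same unproved point. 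A further small slip: a quasi-interior point of $X$ need only be a weak order unit of $\hat{X}$, not a quasi-interior point (norm density of the ideal can be lost); this one is harmless, since a weak order unit already suffices for $Z(\hat{X})\cong\hat{X}_u$.
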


\begin{proof} The proof below was communicated to the author by A. W. Wickstead.

 First observe that for a Dedekind complete Riesz space with weak order unit $u$ the center is isomorphic to the ideal generated by $u$. If $u$ is a quasi-interior point for a Banach lattice $X$ then it is a weak order unit for the Dedekind completion $\hat{X}$. Observe that the Dedekind completion of the principal ideal $X_u$ may be identified with $\hat{X}_u$. Then we can identify as follows:
$$\widehat{Z(X)}\equiv \widehat{X_u}\equiv \hat{X}_u\equiv Z(\hat{X}).$$
\end{proof}

\begin{theorem} \label{t10} Let $X$ be a Banach lattice with a quasi-interior point. Let $w \in Z(X)$, $U$ be a $d$-isomorphism of $X$ such that
$\sigma(U) \subseteq \Gamma$, and $T = wU$. Let $\hat{T}= \hat{w} \hat{U}$ be the unique order continuous extension of $T$ onto $\hat{X}$. Then
$\sigma_i(T) = \sigma_i(\hat{T}), i= 1, \ldots 5$.

\end{theorem}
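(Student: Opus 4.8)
The plan is first to identify $\sigma_i(\hat T)$ with the essential spectra of a weighted composition operator on the centre of $X$, and then to show that $\sigma_i(T)$ is carried by the very same operator, by repeating the proof of Theorem~\ref{t8} in the setting where $X$ is no longer assumed Dedekind complete. Write $Z(X) = C(K)$. The automorphism $f \mapsto UfU^{-1}$ of $C(K)$ is implemented by a homeomorphism $\varphi$ of $K$; let $(Sf)(k) = w(k)f(\varphi(k))$, $f \in C(K)$, be the associated operator. By Lemma~\ref{la1}, $Z(\hat X)$ is isometrically and lattice isomorphic to $\widehat{C(K)} = C(\hat K)$, where $\hat K$ is the absolute of $K$; using the uniqueness of order continuous extensions one checks that, under this identification, the automorphism of $C(\hat K)$ implemented by $\hat U$ is the canonical lift $\hat\varphi$ of $\varphi$ to $\hat K$ and $\hat w$ is the canonical extension of $w$, so that the operator associated with $\hat T$ in the sense of Theorem~\ref{t8} is precisely the order continuous extension $\hat S$ of $S$. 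Since $\hat X$ is Dedekind complete, Theorem~\ref{t8} gives $\sigma_i(\hat T) = \sigma_i(\hat S)$ and Proposition~\ref{p2} gives $\sigma_i(\hat S) = \sigma_i(S)$; hence $\sigma_i(\hat T) = \sigma_i(S)$ for $i = 1, \ldots, 5$, and it remains to prove $\sigma_i(T) = \sigma_i(S)$.

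For the latter I would reproduce, almost verbatim, the argument of Theorem~\ref{t8}, now with $K$ the Gelfand compact of $Z(X)$ for the general Banach lattice $X$ and $S$ in the role of the weighted composition operator on the Stonean space. The equivalence $0 \in \sigma_i(T) \Leftrightarrow 0 \in \sigma_i(S)$ is obtained exactly as in part $(1)$ of that proof, since it only involves the zero set of $w$ in $K$ and finite-dimensional perturbations. For $\lambda \neq 0$ one again uses the identities $\sigma(T) = \sigma(S)$, $\sigma_r(T) = \sigma_r(S)$ and $\sigma_r(T^\prime) = \sigma_r(S^\prime)$ from Theorems~20--22 of~\cite{Ki1}, together with the descriptions of semi-Fredholm and Fredholm points of $\lambda I - S$ given by Theorems~\ref{t3}--\ref{t5} and Remark~\ref{r2}: for $\lambda \notin \sigma_i(S)$ one decomposes $K$ into $\varphi$-invariant closed and open pieces plus a finite $\varphi$-orbit of isolated points (Theorems~\ref{t1}, \ref{t2}, \ref{t5}, Remark~\ref{r2} and Frolik's theorem for the periodic part), lifts the clopen pieces and the finite orbit to $T$-invariant projection bands of $X$ and the closed pieces to the corresponding restrictions, and concludes that $\lambda I - T$ is invertible off the orbit and Fredholm of the prescribed index on it, hence semi-Fredholm (resp.\ Fredholm) with $ind(\lambda I - T) = ind(\lambda I - S)$; conversely, for $\lambda \in \sigma_i(S)$ one produces a sequence of pairwise disjoint (approximate) eigenvectors of $T$, using the data furnished by Theorems~\ref{t6} and~\ref{t7}. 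Finally $\sigma_5(T) = \sigma_5(S)$ follows formally from $\sigma_1(T) = \sigma_1(S)$, $\sigma_4(T) = \sigma_4(S)$ and $\sigma(T) = \sigma(S)$, as in part $(4)$ of the proof of Theorem~\ref{t8}.

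The single place where Theorem~\ref{t8} genuinely used Dedekind completeness is the construction of these pairwise disjoint (approximate) eigenvectors, which there relied on band projections of $X$ onto clopen subsets of the Stonean space; a general $K$ need not possess such clopen sets, and this is exactly the gap that Lemma~\ref{la1} and the quasi-interior point are meant to close. Let $u$ be a quasi-interior point and let $X_u = C(Q)$ be the corresponding principal ideal (an AM-space with unit $u$); since $X_u$ is dense in $X$, the restriction map $Z(X) = C(K) \to Z(X_u) = C(Q)$ is an isometric algebra embedding, dual to a continuous surjection $\pi : Q \to K$. Given finitely many distinct points $k_1, \ldots, k_r$ of a relevant open subset of $K$ (for instance of $Int(\Pi^p)$, or a neighbourhood witnessing a free $\varphi$-orbit), one picks $q_j \in \pi^{-1}(k_j)$ and bump functions $e_j \in C(Q) = X_u$ with pairwise disjoint supports small enough that the central supports of the finitely many translates $T^n e_j$ --- which lie near $\varphi^{(-n)}(k_j)$ --- remain pairwise disjoint in $X$; normalising the $e_j$ in the norm of $X$ and forming the same finite shift-sums $\sum_n c_n T^n e_j$ as in parts $(2a)$ and $(2b)$ of the proof of Theorem~\ref{t8} yields pairwise disjoint, hence non-precompact, sequences $y_j$ with $\|Ty_j - \lambda y_j\| = o(\|y_j\|)$; the conjugate-side estimates go through the embedding $Z(X) \hookrightarrow Z(X^\prime)$ and the induced surjection $\tau : K^\prime \to K$ with $\tau \circ \varphi^\prime = \varphi^{(-1)} \circ \tau$, exactly as in Theorem~\ref{t8}. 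I expect this construction --- verifying that the quasi-interior point supplies norm-one elements of $X$ with prescribed, mutually disjoint central supports along a $\varphi$-orbit, and that passing to the shift-sums preserves the disjointness --- to be the main technical point; the rest is a faithful transcription of the proof of Theorem~\ref{t8}, legitimate because $K$ is the Gelfand compact of $Z(X)$ for an arbitrary Banach lattice and $\hat K$ is its absolute.
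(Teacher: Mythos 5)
Your first paragraph is sound and matches the paper's own infrastructure: Lemma~\ref{la1} plus Proposition~\ref{p2} plus Theorem~\ref{t8} do give $\sigma_i(\hat{T}) = \sigma_i(\hat{S}) = \sigma_i(S)$, so the theorem reduces to $\sigma_i(T) = \sigma_i(S)$ (equivalently, to $\sigma_i(T) = \sigma_i(\hat{T})$, which is how the paper phrases it). The genuine gap is in your claim that the proof of Theorem~\ref{t8} used Dedekind completeness ``in a single place,'' namely the construction of disjoint approximate eigenvectors. That construction is only the \emph{easy} direction (membership in $\sigma_i$), and your bump-function device in $X_u \cong C(Q)$ is a plausible substitute there. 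But the \emph{converse} direction of Theorem~\ref{t8} --- showing that $\lambda \notin \sigma_i(S)$ forces $\lambda I - T$ to be semi-Fredholm or Fredholm --- leans on the extremal disconnectedness of the Stonean compact in several further places: Frolik's theorem (to make the periodic part clopen), the fact that the closure of the orbit $U$ of the finitely many isolated points is clopen (so that $X = X_1 \oplus X_2$ splits into complementary $T$-invariant projection bands in parts $(3b)$ and $(3d)$), and the identification of isolated points of $K$ with atoms of $X$. For the Gelfand compact $K$ of $Z(X)$ of a general Banach lattice none of these is available --- $K$ need not be extremally disconnected, $cl\,U$ need not be clopen, and there is no projection band ``off the orbit'' to which you could restrict $T$ and invoke Theorems 20--22 of~\cite{Ki1} --- so the proposed verbatim transcription breaks precisely where the Fredholmness of $\lambda I - T$ has to be established.

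You also pass over the point that is actually the crux of the paper's argument: comparing $\sigma_2(T^\prime)$ with $\sigma_2((\hat{T})^\prime)$. Since $X$ is a closed subspace of $\hat{X}$, the inclusion $\sigma_2(T) \subseteq \sigma_2(\hat{T})$ is cheap, but $X^\prime$ is a \emph{quotient} of $(\hat{X})^\prime$, not a subspace, and semi-Fredholmness does not pass to quotients or to duals of sublattices for free. The paper handles this by staying inside $\hat{X}$: all topological combinatorics are carried out in the Stonean compact $\hat{K}$ (where Theorem~\ref{t8} already applies to $\hat{T}$), and information is transferred to $X$ through the order-dense embedding --- every band $\hat{C}_n$ of $\hat{X}$ restricts to a band $C_n = \hat{C}_n \cap X$ of $X$, every nonzero $\hat{x} \in \hat{X}$ dominates a nonzero $x \in X$, and the dual side is controlled via the embedding $Z(X) \hookrightarrow Z(X^\prime)$ and the induced surjection $\tau : K^\prime \to K$, with the quasi-interior point invoked a second time to pull a non-isolated point of $K^\prime$ back to a non-isolated point of $K$. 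To repair your proof you would need either to adopt this transfer mechanism or to supply a genuinely new argument for the Fredholm (non-membership) direction and for the dual operator; as written, the proposal establishes at best $\sigma_i(S) \subseteq \sigma_i(T)$.
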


\begin{proof}  (1) $0 \in \sigma_i(T) \Leftrightarrow 0 \in \sigma_i(\hat{T}), i= 1, \ldots, 5. $

 \noindent If $0  \in \sigma(\hat{T}) \setminus \sigma_1(\hat{T})$ then there are $n \in \mathds{N}$ and pairwise disjoint atoms $u_1, \ldots , u_n$ in $\hat{X}$ such that $\ker{\hat{T}} = B$ where $B$ is the band in $\hat{X}$ generated by $u_1, \ldots , u_n$. But clearly $B \subseteq X$ and $B$ is a projection band in $X$. Moreover, the operator $T + P_B$, where $P_B$ is the band projection on $B$ is invertible in $X$ whence $T$ is Fredholm and $null(T) = null(\hat{T})$. Then we can prove that $ind(T) = 0$ similar to part (1) of the proof of Theorem~\ref{t8}.

Assume now that $0 \in \sigma_1(\hat{T}) = \sigma_1(S)$. Recall that for any nonzero $\hat{x} \in \hat{X}$ there is a nonzero $x \in X$ such that
$|x| \leq |\hat{x}|$. Then we can see that $0 \in \sigma_1(T)$ in the same way as in the proof of Theorem~\ref{t8}. Moreover, the same reasoning as in the proof of Theorem~\ref{t8} shows that $0 \in \sigma_i(T) \Leftrightarrow 0 \in \sigma_i(\hat{T}), i= 1, \ldots, 5$.

\noindent (2) $ \sigma_1(\hat{T}) \subseteq \sigma_1(T)$. Let $\lambda \in \sigma_1(\hat{T})$. By step (1) we can assume without loss of generality that $\lambda = 1$. By Theorems~\ref{t6} and~\ref{t8} there are non isolated (maybe identical) points $\hat{k}_1, \hat{k}_2 \in \hat{K}$ such that   at these points we have inequalities (2a), (2b) and (4a), (4b), respectively. Let us prove first that $1 \in \sigma_2(T)$. let us first consider the case when $T$ is invertible on $X$. Consider the band in $\hat{X}$ we denoted in the proof of Theorem~\ref{t8} as $C_n$. Here it will be more convenient to denote it as $\hat{C_n}$. Let $C_n = \hat{C_n} \cap X$ be the corresponding band in $X$. Then we can readily see that $TC_n = C_n$. It follows that $T^\prime C_n^\bot =C_n^\bot$ where $C_n^\bot$ is the annihilator of $C_n$ in $X^\prime$. $C_n^\bot$ is a band in $X^\prime$ and its complementary band $D_n$ is isometrically isomorphic to $C_n^\prime$. We see now that $T^\prime D_n = D_n$ and $(T^\prime)^p|D_n \in Z(D_n)$ where $p$ has the same meaning as in the corresponding part of the proof of Theorem~\ref{t8}. Then we can proceed as in the proof of Theorem~\ref{t8}.

Now let us drop the condition that $T$ is invertible. We still can consider the bands $C_n$ in $X$ and $D_n$ in $X^\prime$. Let $R = (|w| + \varepsilon)U$ where $\varepsilon > 0$. The operator $R$ is invertible on $X$ and the previous paragraph shows that $R^\prime D_n = D_n$. The Banach lattice $X^\prime$ is Dedekind complete whence $T^\prime = R^\prime F$ where $F \in Z(X^\prime)$. Therefore $T^\prime D_n \subseteq D_n$ and again we can proceed like in the proof of Theorem~\ref{t8}.

Next we will prove that $1 \in \sigma_2(T^\prime)$. From Lemma~\ref{la1} we easily conclude that there is a non isolated point $ k_2 \in K$ satisfying the inequalities (4a) and (4b), where $K$ is the Gelfand compact of $Z(X)$. After we have noticed it we can repeat the corresponding arguments from the proof of Theorem~\ref{t8}.

\noindent (3) $ \sigma_1(T) \subseteq \sigma_1(\hat{T})$. Let $1 \not \in \sigma_2(\hat{T})$. Then it is obvious from~\cite[Corollary I.4.7, page 43]{EE} that $1 \not \in \sigma_2(T)$. Let $1 \not \in \sigma_2((\hat{T})^\prime)$. Assume contrary to our statement that $1 \in \sigma_2(T^\prime)$.  Then by Theorem~\ref{t6} there is a non isolated point $u \in K^\prime$ (where $K^\prime$ is the Gelfand compact of $Z(X^\prime)$) such that at $u$ we have inequalities (2a) and (2b) for $w^\prime$ and the map $\varphi^\prime$ corresponding to the operator $U^\prime$. From that and the fact that $X$ is a lattice with a quasi interior point follows that there is a non isolated point $k$ in $K$ such that at $k$ we have inequalities (4a) and (4b). Then it follows from Theorems~\ref{t6} and~\ref{t8}, from Proposition~\ref{p2} and from Lemma~\ref{la1} that
$1 \in \sigma_2((\hat{T})^\prime)$, a contradiction.
As far we have proved that $\sigma_i(T) = \sigma_i(\hat{T}), i=1,2$ as well as $\sigma_2(T^\prime) = \sigma_2((\hat{T})^\prime)$

\noindent (4) The equality $\sigma_3(T) = \sigma_3(\hat{T})$ follows from the previous steps and the identity $\sigma_3(T) = \sigma_2(T) \cup \sigma_2(T^\prime)$.

\noindent (5) $\sigma_4(T) = \sigma_4(\hat{T})$. It is enough to prove that if $1 \not \in \sigma_3(T)$ then $ind (I - T) = ind (I - \hat{T})$.
We know from the previous step that the operator $(I - \hat{T})$ is Fredholm. The inequality $null(I-T) \leq null(I-\hat{T})$ is trivial. The inverse inequality $null(I - \hat{T}) \leq null(I-T)$ follows from Theorems~\ref{t6}, ~\ref{t8}, and the trivial fact that the lattices $X$ and $\hat{X}$ have the same supply of atoms. Similarly we obtain the inequality $null(I - (\hat{T})^\prime) \leq null(I-T^\prime)$. Finally the inequality
$null(I - (\hat{T})^\prime) \geq null(I-T^\prime)$ follows from the reasoning applied on step (3).

\noindent (6) $\sigma_5(T) = \sigma_5(\hat{T})$. This statement follows from the definition of the set $\sigma_5(T)$ and the equalities
$\sigma(T) = \sigma(\hat{T})$ and $\sigma_1(T) = \sigma_1(\hat{T})$.

\end{proof}

Now we are ready to prove $(10)$ in full generality.

\begin{theorem} \label{t9} Let $X$ be a Banach lattice. Let $w \in Z(X)$, $U$ be a $d$-isomorphism of $X$ such that
$\sigma(U) \subseteq \Gamma$, and $T = wU$. Let $\hat{T}= \hat{w} \hat{U}$ be the unique order continuous extension of $T$ onto $\hat{X}$. Then
$\sigma_i(T) = \sigma_i(\hat{T}), i= 1, \ldots 5$.

\end{theorem}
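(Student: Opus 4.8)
The plan is to reduce the general case to the case of a Banach lattice with a quasi-interior point, which was already settled in Theorem~\ref{t10}. The standard device is localization: although $X$ need not have a quasi-interior point, it is an order direct sum (in a suitable $c_0$-type sense, not a genuine direct sum of the whole lattice, but enough for spectral computations) of bands each of which does have a quasi-interior point, and the essential spectra can be recovered from the behaviour of $T$ on these bands together with a tail condition. More precisely, first I would fix any $x \in X_+$ and let $B_x$ be the band generated by $x$; then $x$ is a quasi-interior point of $B_x$. The obstruction is that $B_x$ need not be $T$-invariant, since $U$ moves bands around. So the first real step is to replace $B_x$ by the smallest $T$-reducing band containing it, i.e. the band $B$ generated by $\{\,U^{(n)} x : n \in \mathds{Z}\,\}$ together with all the pieces cut out by the central operator $w$; because $U$ is a $d$-isomorphism and $w$ is central, this band is $T$-invariant and $T^{-1}$-invariant, $T B = B$ (modulo the kernel of $w$), and $B$ still has a quasi-interior point, namely $\sum_{n} 2^{-|n|}\|U^{(n)}x\|^{-1} |U^{(n)}x|$ suitably normalized. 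The same construction performed inside $\hat X$ produces the band $\hat B$ which is exactly the band in $\hat X$ generated by $B$, and $\hat B = \widehat{B}$ is the Dedekind completion of $B$, with $\hat T|_{\hat B} = \widehat{T|_B}$.

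Next I would invoke Theorem~\ref{t10} applied to the lattice $B$ (with quasi-interior point) and to the restriction $T|_B = (w|_B)(U|_B)$, which is again of the form $(6)$ since $U|_B$ is a $d$-isomorphism of $B$ with spectrum in $\Gamma$ (its spectrum can only shrink under restriction to an invariant band, and one checks invertibility is inherited because $B$ is $U$-invariant and $U^{-1}$-invariant). This gives $\sigma_i(T|_B) = \sigma_i(\hat T|_{\hat B})$ for $i = 1,\dots,5$. The point of the localization is the characterization of $\sigma_i(T)$ in terms of the Gelfand compact $K$ of $Z(X)$ given by Theorems~\ref{t6}, \ref{t7} and Proposition~\ref{p1}: membership of $\lambda$ in $\sigma_i(T)$ is governed by the existence of non-isolated points $k \in K$ at which the inequalities $2(a)$–$2(b)$ or $4(a)$–$4(b)$ hold, and these are purely local conditions on $w$ and $\varphi$ near $k$. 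Since every point of $K$ lies in the clopen set corresponding to some band $B$ of the above type, and since the zero-set of $w$ likewise decomposes, one has $\sigma_i(T) = \overline{\bigcup_{B} \sigma_i(T|_B)}$ (with the closure needed only to handle accumulation of eigenvalues/periodic data across infinitely many pieces, and with the analogous formula on $\hat X$). Because $K$ and $\hat K$ (the Gelfand compacts of $Z(X)$ and $Z(\hat X)$) have the canonical surjection $\tau : \hat K \to K$ satisfying $\varphi \circ \tau = \tau \circ \hat\varphi$, exactly as in Proposition~\ref{p2}, the bands in $\hat X$ are in bijection with those in $X$, and $\bigcup_B \sigma_i(\hat T|_{\hat B}) = \bigcup_B \sigma_i(T|_B)$ termwise. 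Taking closures and using step (1)-type handling of $\lambda = 0$ (verbatim from the proof of Theorem~\ref{t10}, since the zero-set argument only uses the center) gives $\sigma_i(T) = \sigma_i(\hat T)$.

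The main obstacle I anticipate is the gluing step: justifying that $\sigma_i(T)$ is determined by the family $\{\sigma_i(T|_B)\}$ as the closure of their union, uniformly in the choice of localizing bands, and that no essential spectrum is ``created at infinity'' by the interaction between different bands. For $\sigma_3, \sigma_2, \sigma_1$ this follows cleanly from the local nature of conditions $2(a)$–$2(b)$, $4(a)$–$4(b)$ (a non-isolated point of $K$ witnessing membership lies in some clopen piece, hence in some $B$, and conversely) together with Corollary~\ref{c1}-style control of the finite exceptional sets. For $\sigma_4$ one must additionally track the index; here I would argue that $\mathrm{ind}(\lambda I - T)$ is additive over the (finitely many relevant) bands on which $\lambda I - T$ fails to be invertible, exactly as in step (5) of the proof of Theorem~\ref{t10}, using that $X$ and $\hat X$ have the same atoms. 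Finally $\sigma_5(T) = \sigma_5(\hat T)$ follows formally from $\sigma_1(T) = \sigma_1(\hat T)$ and the already-known equality $\sigma(T) = \sigma(\hat T)$ (Theorem~22 of~\cite{Ki1}), since $\sigma_5$ is defined purely in terms of $\sigma_1$ and the resolvent set. I would close the argument by noting that the $\lambda = 0$ case is handled word-for-word as in part (1) of the proof of Theorem~\ref{t8}, because it involves only the central operator $w$ and its zero set, which behave identically in $X$ and $\hat X$.
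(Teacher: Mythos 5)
Your overall strategy --- localize to $T$-reducing sublattices generated by orbits $\{U^{(n)}x\}$, which admit quasi-interior points, apply Theorem~\ref{t10} to each piece, and glue --- is in the right spirit (the paper also localizes to closed $U$- and $U^{-1}$-invariant ideals $J(x)$ generated by single elements), but the gluing step is a genuine gap. The formula $\sigma_i(T)=\overline{\bigcup_B\sigma_i(T|_B)}$ is asserted rather than proved, and as stated it fails for $i=4$ and $i=5$: for $i=4$ the index can cancel between a reducing band and its complement, so even the inclusion $\sigma_4(T|_B)\subseteq\sigma_4(T)$ is false in general, and $\sigma_5$ is not monotone under restriction to reducing bands. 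For $i=1,2,3$ the inclusion $\sigma_i(T)\subseteq\overline{\bigcup_B\sigma_i(T|_B)}$ is exactly the hard part: a witness for $\lambda\in\sigma_i(T)$ need not live in, nor accumulate on, any single localizing piece, and taking the closure of the union does not repair this; one has to prove that some sufficiently large $B$ (or a diagonal argument across infinitely many of them) always captures $\lambda$. A smaller but real inaccuracy in the same step: the \emph{band} generated by the orbit of $x$ need not have a quasi-interior point --- your candidate $\sum_n 2^{-|n|}\|U^{(n)}x\|^{-1}|U^{(n)}x|$ is a quasi-interior point of the closed ideal it generates, not of the band --- so one must work with closed invariant ideals, as the paper does.

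More importantly, your proposal never engages with the dual, which is the only place where the quasi-interior-point hypothesis is actually used in the proof of Theorem~\ref{t10}. The two implications that need repair are $1\in\sigma_2\bigl((\hat T)'\bigr)\Rightarrow 1\in\sigma_2(T')$ and its converse; the relevant compact there is the Gelfand compact of $Z(X')$ (respectively of $Z((\hat X)')$), which is not the absolute of $K$, so the surjection $\tau:\hat K\to K$ of Proposition~\ref{p2} gives no direct handle on it. The paper resolves this by a dichotomy: either some single band $B_x=J(x)'$ in $(\hat X)'$ already satisfies $1\in\sigma_2\bigl((\hat T)'|B_x\bigr)$, and then the quasi-interior-point case applies; or no single one does, in which case the density of $\bigcup_x \mathrm{supp}(B_x)$ in $(\hat K)'$ is used to pick points $u_n$ in \emph{pairwise disjoint} bands $B_{x_n}$ satisfying the inequalities $2(a)$--$2(b)$ only up to error $1/n$, which are then transferred to $K'$ via Lemma~\ref{la1}, and an accumulation point of the transferred points is the required witness. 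This second branch is precisely the ``essential spectrum created at infinity by interaction between pieces'' that you name as the main obstacle; it genuinely occurs and must be argued, not set aside. Without an argument of this kind on the dual side, the reduction to Theorem~\ref{t10} is incomplete.
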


\begin{proof} A look at the proof of Theorem~\ref{t10} shows that we used the condition that $X$ has a quasi interior point at two places.

\noindent (I) $1 \in \sigma_2((\hat{T})^\prime) \Rightarrow 1 \in \sigma_2(T^\prime)$. To prove this implication in general case notice first that if $1 \in \sigma_2((\hat{T})^\prime)$ then by Theorems~\ref{t6} and~\ref{t8} there is a non isolated point $u$ in the Gelfand compact $(\hat{K})^\prime$ of $Z((\hat{X})^\prime)$ such that at this point we have inequalities (2a) and (2b) for $(\hat{w})^\prime$ and the homeomorphism $\psi^\prime$ corresponding to $(\hat{U})^\prime$. For any $x \in X$ let $J(x)$ be a closed $\hat{U}$ and$(\hat{U})^{-1}$-invariant ideal in $\hat{X}$ such that $\hat{x} \in J(x)$. Recall that the conjugate $J(x)^\prime$ is a band in $(\hat{X})^\prime$ which we will denote as $B_x$. There are two possibilities.

(a) There is an $x \in X$ such that $1 \in \sigma_2((\hat{T})^\prime | B_x)$. Then we proceed as in the proof of Theorem~\ref{t10}.

(b) If we cannot find an $x$ as in (a) then notice that the set $\bigcup \limits_{x \in X} supp(B_x)$ is dense in $(\hat{K})^\prime$. Therefore we can find elements $x_n \in X$ and points $u_n \in supp(B_{x_n})$ such that the bands $B_{x_n}$ are pairwise disjoint
$$ |(\hat(w_m))^\prime (u_n)| \geq 1- 1/n , m = 1, \ldots , n$$
and
$$ |(\hat(w_m))^\prime ((\psi^\prime)^{(-m)}(u_n))| \leq 1+ 1/n , m = 1, \ldots , n.$$
Applying Lemma~\ref{la1} we can find points $v_n \in K^\prime$ such that
$$ |((w_m)^\prime (u_n)| \geq 1- 1/n , m = 1, \ldots , n$$
and
$$ |((w_m)^\prime ((\varphi^\prime)^{(-m)}(u_n))| \leq 1+ 1/n , m = 1, \ldots , n.$$
Let $v$ be any accumulation point of the set $\{v_n\}$. Then at $v$ we have inequalities (2a) and (2b) whence $1 \in \sigma_2(T^\prime)$.

\noindent (II)$1 \in \sigma_2(T^\prime) \Rightarrow 1 \in \sigma_2((\hat{T})^\prime) $. We can use the arguments from part(3) of the proof of
Theorem~\ref{t10} and the fact that if $I_x$ is a closed $U$-invariant ideal in $X$ and $C_x = (I_x)^\prime$ then $\bigcup \limits_{x \in X} supp(C_x) = K^\prime$.

\end{proof}

\begin{corollary} \label{c5}The statement of Corollary~\ref{c4} remains true without the assumption that $X$ is a Dedekind complete Banach lattice.

\end{corollary}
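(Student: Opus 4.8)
The plan is to derive both assertions from Theorem~\ref{t9}, reducing everything to the Dedekind complete case already settled in Corollary~\ref{c4}. Recall that $\hat{T}=\hat{w}\hat{U}$ is again an operator of the form $(6)$, this time on the Dedekind complete Banach lattice $\hat{X}$ (as observed in the discussion preceding Proposition~\ref{p2}), that $\sigma_i(T)=\sigma_i(\hat{T})$ for $i=1,\dots,5$ by Theorem~\ref{t9}, and that $\sigma(T)=\sigma(\hat{T})$ by~\cite[Theorem~22]{Ki1}. Thus it is enough to check that band irreducibility of $T$ forces band irreducibility of $\hat{T}$, and that atomlessness of $X$ forces atomlessness of $\hat{X}$; Corollary~\ref{c4} then applies to $\hat{T}$ and transfers back to $T$ via these three identities.

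For part (1), suppose $T$ is band irreducible and let $B$ be a $\hat{T}$-invariant band in $\hat{X}$; put $B_0=B\cap X$. Since the canonical embedding $X\hookrightarrow\hat{X}$ is order dense and order continuous, suprema of subsets of $X$ are computed the same way in $\hat{X}$, so $B_0$ is a band in $X$, and it is $T$-invariant because $\hat{T}$ extends $T$ and maps $X$ into $X$. If $B\neq\{0\}$, pick $0\neq\hat{x}\in B$; by the domination property of the Dedekind completion there is $0\neq x\in X$ with $|x|\le|\hat{x}|$, hence $x\in B_0$ and $B_0\neq\{0\}$. If $B\neq\hat{X}$, then $B^{\perp}\neq\{0\}$ (as $\hat{X}$ is Dedekind complete), so the same property gives $0\neq x\in X\cap B^{\perp}$, whence $x\notin B$ and $B_0\neq X$. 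So a nontrivial $\hat{T}$-invariant band in $\hat{X}$ would produce a nontrivial $T$-invariant band in $X$, a contradiction; thus $\hat{T}$ is band irreducible. Corollary~\ref{c4}(1) now yields $\sigma_1(\hat{T})=\sigma(\hat{T})$, and combining with $\sigma_1(T)=\sigma_1(\hat{T})$ and $\sigma(T)=\sigma(\hat{T})$ gives $\sigma_1(T)=\sigma(T)$.

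For part (2), assume $X$ has no atoms. If $\hat{X}$ had an atom $\hat{u}$, choose $x\in X$ with $0<x\le\hat{u}$ (order density); then $x$ is an atom of $X$, since any $y\in X$ with $0<y\le x$ satisfies $0<y\le\hat{u}$, so both $x$ and $y$ are scalar multiples of $\hat{u}$ and hence $y$ is a multiple of $x$. This contradicts atomlessness of $X$, so $\hat{X}$ is atomless, and Corollary~\ref{c4}(2) gives $\sigma_3(\hat{T})=\sigma(\hat{T})$; together with $\sigma_3(T)=\sigma_3(\hat{T})$ and $\sigma(T)=\sigma(\hat{T})$ this yields $\sigma_3(T)=\sigma(T)$.

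The only step that needs any care is the transfer of band irreducibility in part (1): one must know that $B\cap X$ is genuinely a band (not just an ideal) of $X$ and that it is nontrivial exactly when $B$ is. Both points rest only on order continuity of the embedding $X\hookrightarrow\hat{X}$ and on the fact --- already used repeatedly in the proofs of Theorems~\ref{t10} and~\ref{t9} --- that every nonzero element of $\hat{X}$ dominates in modulus a nonzero element of $X$, so I do not anticipate a genuine obstacle here.
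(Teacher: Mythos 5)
Your proof is correct and follows the route the paper intends: the corollary is stated without proof precisely because it is meant to follow from Theorem~\ref{t9} (and $\sigma(T)=\sigma(\hat{T})$) combined with Corollary~\ref{c4}, once one checks that band irreducibility and atomlessness pass from $X$ to $\hat{X}$. Your verification of those two transfer facts, via order density of $X$ in $\hat{X}$ and the domination of every nonzero $\hat{x}\in\hat{X}$ by a nonzero element of $X$, is sound.
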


In connection with Theorem~\ref{t9} the following question might be of interest.

\begin{problem} \label{pr1} Let $X$ be a Banach lattice and $T$ be an order continuous linear bounded operator on $X$. Let $\hat{X}$ be the Dedekind completion of $X$ and $\hat{T}$ be the unique order continuous extension of $T$ on $\hat{X}$.

Is it true that $\sigma_i(T) = \sigma_i(\hat{T}), i = 0, 1, \ldots 5$, where $\sigma_0(T) = \sigma(T)$

\end{problem}

\bigskip

\section{$C(K)$ revisited. The case $(\lambda I - T)C(K) = C(K)$ for weighted compositions generated by non-invertible open surjections.}

The spectrum of arbitrary disjointness preserving operators on $C(K)$ was described in~\cite{Ki}. The problem of describing essential spectra of such operators in the case when the map $\varphi: K \rightarrow K$ is not invertible becomes considerably more complicated and its complete solution remains unknown to the author. In this section we will provide necessary and sufficient conditions for the equality $(\lambda I - wT_\varphi)C(K) = C(K)$ in the case when $|w| > 0$ on $C(K)$, the map $\varphi : K \rightarrow K$ is open, and $\varphi(K) =K$ (see Theorem~\ref{t100} below).

\begin{definition} \label{d300} Let $K$ be a compact Hausdorff space and $\varphi$ be a continuous map of $K$ onto itself.

$(1)$ We call a subset $S$ of $K$ a $\varphi$-string if

 $S = \{s_i : i \in \mathds{Z}\}$ and $\varphi(s_i) = s_{i+1}, i \in \mathds{Z}$

 $(2)$ Let $S$ be a $\varphi$-string. We define the set $\overrightarrow{S}$ as
 $$ \overrightarrow{S} = \bigcap \limits_{i=1}^\infty cl \{s_k : k \geq i\}$$
and the set $\overleftarrow{S}$ as
 $$ \overleftarrow{S} = \bigcap \limits_{i=1}^\infty cl \{s_k : k \leq -i\}.$$

\end{definition}

\begin{lemma} \label{l100} Let $T = wT_\varphi$ be a weighted composition operator on $C(K)$ and $\varphi(K) = K$. Let $\lambda \in \sigma_r(T^\star)$, i.e. $\lambda \in \sigma(T)$ and $(\lambda I - T)C(K) = C(K)$. Then $\lambda \neq 0$.
\end{lemma}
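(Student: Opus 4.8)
The claim is that if $0 \in \sigma(T)$ and $(\lambda I - T)C(K) = C(K)$ fails to force $\lambda \ne 0$, we reach a contradiction; equivalently, $0$ is never a point of $\sigma_r(T^\star)$ for a weighted composition $T = wT_\varphi$ with $\varphi(K)=K$. The plan is to argue by contradiction: assume $0 \in \sigma(T)$ and $TC(K) = C(K)$, i.e. $T$ is surjective. Surjectivity of $T = wT_\varphi$ is very restrictive because $T_\varphi$ (composition with a surjection) is always injective and has closed range equal to the sublattice of functions constant on fibers of $\varphi$ — actually, since $\varphi$ is onto, $T_\varphi$ is an isometric embedding of $C(K)$ into $C(K)$. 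So $T = wT_\varphi$ being onto means $w$ must "undo" the possible loss of range of $T_\varphi$ as well as be bounded below in a suitable sense.

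First I would extract the key consequence of surjectivity: $0 \in \sigma(T)$ means $w$ vanishes somewhere, so the zero set $Z = \{k : w(k) = 0\}$ is nonempty. Pick $k_0 \in Z$. Since $T$ is surjective, for the constant function $\mathbf{1}$ there is $g \in C(K)$ with $Tg = w \cdot (g\circ\varphi) = \mathbf 1$. Evaluating at $k_0$ gives $0 = w(k_0) g(\varphi(k_0)) = 1$, an immediate contradiction. That is essentially the whole argument; the only thing to double-check is that $(\lambda I - T)C(K) = C(K)$ with $\lambda = 0$ literally means $(-T)C(K) = C(K)$, i.e. $TC(K) = C(K)$, which is the hypothesis "$\lambda \in \sigma_r(T^\star)$ and $(\lambda I - T)C(K) = C(K)$" specialized to $\lambda = 0$. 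So the statement reduces to: a surjective weighted composition operator on $C(K)$ has $0 \notin \sigma$, i.e. $|w| > 0$ everywhere; and if $|w|>0$ everywhere then $0 \notin \sigma(T)$ since $T^{-1} = T_{\varphi}^{-1} w^{-1}$ is available when $T_\varphi$ is invertible — but here $\varphi$ need not be invertible, so one should phrase it purely through the evaluation argument above rather than through inverses.

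The cleanest write-up: suppose for contradiction $\lambda = 0$, so $0 \in \sigma(T)$ and $TC(K) = C(K)$. From $0 \in \sigma(T)$ and the fact that for weighted composition operators $0 \in \sigma(T)$ iff $w$ has a zero (this follows from the description of $\sigma(T)$ in \cite{Ki}, or directly: if $|w| \ge \delta > 0$ then $\|Tf\| \ge \delta\|f\circ\varphi\|_? $ — careful, $\varphi$ non-injective, but $\|f\circ\varphi\|_\infty = \|f\|_\infty$ since $\varphi$ onto, so $T$ is bounded below, and its range is closed; surjectivity then would need checking separately). To avoid that subtlety I simply take $k_0$ with $w(k_0)=0$ directly from $0\in\sigma(T)$ via the known spectral description, then use surjectivity applied to $\mathbf 1$ to get $1 = w(k_0)g(\varphi(k_0)) = 0$.

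The main (and only mild) obstacle is justifying "$0 \in \sigma(T) \Rightarrow w$ has a zero" cleanly in the non-invertible setting — but this is standard: if $|w| \ge \delta > 0$ on $K$ then one checks $T$ is invertible on the closed sublattice $T_\varphi C(K)$ and, more to the point, since we are *assuming* $TC(K) = C(K)$, the real work is already done — we only need *one* value of $w$ being zero, which is guaranteed as soon as $0 \in \sigma(T)$ and which, in the worst case, can be obtained by noting that if $w$ were nowhere zero then $\mathbf 1/w \cdot$ (something) would invert $T$ on all of $C(K)$ precisely because the equation $w(g\circ\varphi) = f$ is solved by $g\circ\varphi = f/w$, and since $\varphi$ is onto this determines $g$ on all of $K$ consistently only if $f/w$ is constant on fibers — so actually surjectivity *already* fails generically, reinforcing that $0\in\sigma(T)$. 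I would present the short evaluation contradiction as the proof and relegate the remark about the spectral description to a citation of \cite{Ki}.

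\begin{proof}[Proof sketch of Lemma~\ref{l100}]
Assume, for contradiction, that $\lambda = 0$; then $0 \in \sigma(T)$ and $TC(K) = C(K)$. Since $0 \in \sigma(T)$ and $T = wT_\varphi$ with $T_\varphi$ an isometric embedding (because $\varphi(K) = K$ forces $\|f\circ\varphi\|_\infty = \|f\|_\infty$), the weight $w$ must vanish at some point $k_0 \in K$; otherwise $|w|\ge\delta>0$ and a direct computation with the equation $w\,(g\circ\varphi)=f$ shows $0\notin\sigma(T)$, contrary to assumption. Now use surjectivity: there exists $g \in C(K)$ with $Tg = \mathbf 1$, i.e. $w(k)\,g(\varphi(k)) = 1$ for all $k \in K$. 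Evaluating at $k_0$ gives $0 = w(k_0)\,g(\varphi(k_0)) = 1$, a contradiction. Hence $\lambda \neq 0$.
\end{proof}
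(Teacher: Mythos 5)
Your evaluation argument (take $g$ with $Tg=\mathbf 1$, so $w(k)\,g(\varphi(k))=1$ for all $k$) is sound and is exactly the paper's first step: it shows that surjectivity of $T$ forces $|w|>0$ everywhere. But this means your ``Case A'' (a zero of $w$ exists) is vacuous, so the entire burden of the proof falls on the step you dismiss as a ``direct computation'': the claim that $|w|\ge\delta>0$ implies $0\notin\sigma(T)$. That claim is \emph{false} in the non-invertible setting. Take $K=\Gamma$, $\varphi(z)=z^2$, $w\equiv 1$: then $T_\varphi$ is a non-surjective isometry, so $0\in\sigma(T_\varphi)$ even though $w$ never vanishes. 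Solving $w\,(g\circ\varphi)=f$ pointwise gives $g\circ\varphi=f/w$, which has no continuous solution $g$ unless $f/w$ is constant on the fibers of $\varphi$ --- so nowhere-vanishing $w$ alone does not rule out $0\in\sigma(T)$. You flagged this very subtlety in your discussion (``surjectivity then would need checking separately'') but then did not resolve it in the final write-up.

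The gap is repairable with material you already have on the page: since $\varphi(K)=K$, $\|f\circ\varphi\|_\infty=\|f\|_\infty$, so $|w|\ge\delta>0$ gives $\|Tf\|_\infty\ge\delta\|f\|_\infty$, i.e.\ $T$ is injective with closed range; combined with the \emph{hypothesis} $TC(K)=C(K)$, the operator $T$ is bijective and hence invertible, so $0\notin\sigma(T)$, contradicting $0=\lambda\in\sigma(T)$. The paper reaches the same contradiction by a slightly different route: from $|w|>0$ it concludes that multiplication by $w$ is invertible, hence $T_\varphi$ is surjective, hence $\varphi$ is injective and therefore a homeomorphism of the compact space $K$, so $T$ is invertible. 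Either completion works, but as written your proof stops one essential step short.
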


\begin{proof} If $TC(K) = C(K)$ then clearly $|w| > 0$ on $K$ whence the operator of multiplication by $w$ is invertible in $C(K)$ and therefore $T_\varphi C(K) = C(K)$. It follows immediately that $\varphi$ is injective and therefore a homeomorphism of $K$ onto itself whence $T$ is invertible on $C(K)$.

\end{proof}

\begin{lemma} \label{l200} Let $T = wT_\varphi$ be a weighted composition operator on $C(K)$. Let $\varphi(K) = K$ and $|w| >0$ on $K$. Let $\lambda \in \sigma_r(T^\star)$. Then there is an open subset $U$ of $K$ such that

(i) $\varphi(U) = \varphi^{-1}(U) = U$

 and for any $\varphi$ - string $S = \{s_i, i \in \mathds{Z}\}$ such that $S \subset U$ we have
$$ (ii) \; \liminf \limits_{n \to \infty} |w_n(s_0)|^{1/n} > |\lambda| \; \mathrm{and} \; \limsup \limits_{n \to \infty} |w_n(s_{-n})|^{1/n} < |\lambda|. \eqno{(12)} $$
\end{lemma}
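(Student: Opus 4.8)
The plan is to derive the two asymptotic inequalities in (12) from a quantitative form of surjectivity, and then to read off $U$ as the open ``transit'' part of a decomposition of $K$ into an expanding, a contracting, and a transit piece — the analogue for $\varphi$-strings of the decomposition in Theorem~\ref{t1}.

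First I would normalize and reformulate. By Lemma~\ref{l100} we have $\lambda\neq 0$, and replacing $w$ by $w/\lambda$ changes neither the hypothesis $(\lambda I-T)C(K)=C(K)$ nor the conclusion, so we may assume $|\lambda|=1$ (keeping the name $\lambda$). By the open mapping theorem, surjectivity of $\lambda I-T$ is equivalent to: there is $C>0$ such that for every $g\in C(K)$ there is $f\in C(K)$ with $\lambda f-w\,(f\circ\varphi)=g$ and $\|f\|\le C\|g\|$.

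Next I would extract a growth dichotomy. Fix $k$ with injective forward orbit $k_j=\varphi^{(j)}(k)$. Iterating the equation $\lambda f(k_j)-w(k_j)f(k_{j+1})=g(k_j)$ gives $f(k)=\sum_{j=0}^{n-1}\lambda^{-j-1}w_j(k)g(k_j)+\lambda^{-n}w_n(k)f(k_n)$; choosing by Urysohn's lemma a norm-one $g$ that makes every term $\lambda^{-j-1}w_j(k)g(k_j)$ equal to $|w_j(k)|$, and using $\|f\|\le C$, one obtains $\sum_{j=0}^{n-1}|w_j(k)|\le C(1+|w_n(k)|)$ for all $n$. An elementary recursion on the partial sums then yields the \emph{forward dichotomy}: either $\sum_j|w_j(k)|<\infty$ (so $|w_n(k)|\to 0$), or $\liminf_n|w_n(k)|^{1/n}\ge 1+1/C>|\lambda|$. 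Solving the equation forward along an injective backward orbit $k=s_0,s_{-1},s_{-2},\dots$ (which exists because $\varphi$ is onto) and running the same recursion on $|w_j(s_{-j})|^{-1}$ gives the \emph{backward dichotomy}: either $|w_n(s_{-n})|\to\infty$, or $\limsup_n|w_n(s_{-n})|^{1/n}\le(1+1/C)^{-1}<|\lambda|$. Finally, writing the identity over one period shows that surjectivity rules out $\varphi$-periodic $k$ with $w_p(k)=\lambda^p$; in particular no periodic $\varphi$-string can satisfy both halves of (12).

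Then I would build $U$. Let $F$ (resp. $E$) be the largest closed, fully $\varphi$-invariant subset of $K$ with $\sigma(T_F)\subset\{|\xi|>|\lambda|\}$ (resp. $\sigma(T_E)\subset\{|\xi|<|\lambda|\}$), put $Q=K\setminus(E\cup F)$ and $U=Q$. Then $U$ is open and $\varphi(U)=\varphi^{-1}(U)=U$, which is (i). For (ii) one needs the structural fact — the real content of the lemma — that every $\varphi$-string $S\subset Q$ has $\overrightarrow S\subseteq F$ and $\overleftarrow S\subseteq E$. Granting this: $S$ is not periodic, so the forward dichotomy applies; if $S$ were forward-contracting, then every weak-$\ast$ cluster point of the empirical measures $\frac1n\sum_{j<n}\delta_{s_j}$ would be a $\varphi$-invariant measure $\mu$ supported in $\overrightarrow S\subseteq F$ with $\int\ln|w|\,d\mu\le\ln|\lambda|$ (here $|w|>0$ is used so $\ln|w|$ is continuous), contradicting $\sigma(T_F)\subset\{|\xi|>|\lambda|\}$; hence $\liminf_n|w_n(s_0)|^{1/n}>|\lambda|$, and the second inequality in (12) follows by the symmetric argument with backward orbits, $\overleftarrow S$ and $E$.

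I expect the main obstacle to be the structural step just invoked: producing, from the surjectivity estimate and the pointwise dichotomies alone, a decomposition $K=E\cup Q\cup F$ with $E,F$ closed and fully $\varphi$-invariant and $\overrightarrow S\subseteq F$, $\overleftarrow S\subseteq E$ for every string $S\subset Q$. For a homeomorphism this is Theorem~\ref{t1}; in the non-invertible case the difficulty is that a point has many backward orbits, so $\overleftarrow S$ depends on $S$, and one must show the expanding/contracting alternative is coherent along all strings simultaneously and that the resulting $E$ and $F$ are genuinely closed and invariant — this is exactly where the hypotheses that $|w|>0$ and that $\varphi$ is an \emph{open} surjection are needed.
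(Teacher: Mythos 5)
Your proposal has a genuine gap at exactly the point you flag yourself: the construction of $U$ rests on a decomposition $K=E\cup Q\cup F$ into closed, fully $\varphi$-invariant sets $E,F$ with $\sigma(T_E)\subset\{|\xi|<|\lambda|\}$, $\sigma(T_F)\subset\{|\xi|>|\lambda|\}$, together with the claim that every $\varphi$-string $S\subset Q$ satisfies $\overrightarrow{S}\subseteq F$ and $\overleftarrow{S}\subseteq E$. For homeomorphisms this is the content of Theorem~\ref{t1}, and there it is a theorem, not a formality; for non-invertible $\varphi$ nothing in your argument produces it (it is not even clear that a largest closed invariant $F$ with $\sigma(T_F)\subset\{|\xi|>|\lambda|\}$ exists, since the property need not survive taking closures of unions). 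You also invoke openness of $\varphi$ to close this gap, but openness is not a hypothesis of this lemma --- it enters only later, in Lemma~\ref{l700} and Theorem~\ref{t100}. The quantitative dichotomies you derive from the open mapping theorem are correct and interesting, but by themselves they only give, at each point, the alternative ``summable, hence $|w_n|\to 0$'' versus ``exponential growth at rate $\ge 1+1/C$'', and the whole difficulty of $(12)$ is to exclude the wrong branch of this alternative on a nonempty fully invariant open set; that exclusion is precisely what is missing. A further soft spot: the variational formula $\rho(T_F)=\max_\mu\exp\int\ln|w|\,d\mu$ that you use to contradict forward contraction is proved in~\cite{Ki} for homeomorphisms, while $\varphi$ restricted to $F$ need not be invertible here.

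The paper's proof is much shorter and avoids all of this. Since $(\lambda I-T)C(K)=C(K)$ and $\lambda\in\sigma(T)$, the operator $\lambda I-T$ has nonzero kernel; take $f\neq 0$ with $Tf=\lambda f$ and let $U_1=\{f\neq 0\}$, which satisfies $\varphi(U_1)=\varphi^{-1}(U_1)=U_1$ because $|w|>0$. Restriction to $K_1=cl\,U_1$ preserves surjectivity of $\lambda I-T$ (Tietze extension), and since $\sigma_r(T_1^\star)$ is open one finds eigenvalues $\gamma$ and $\delta$ with $|\delta|<|\lambda|<|\gamma|$ and eigenfunctions $g,h$ supported on successively smaller closed invariant sets; $U$ is the intersection of the three cozero sets. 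The inequalities $(12)$ then drop out of the one-line identities $w_n(s_0)\,g(\varphi^n(s_0))=\gamma^n g(s_0)$ and $w_n(s_{-n})\,h(s_0)=\delta^n h(s_{-n})$, which give $\liminf_n|w_n(s_0)|^{1/n}\ge|\gamma|>|\lambda|$ and $\limsup_n|w_n(s_{-n})|^{1/n}\le|\delta|<|\lambda|$. If you want to salvage your approach, the eigenfunction device is the ingredient to borrow: it converts surjectivity into uniform exponential estimates along every string in $U$ without any structural decomposition of $K$.
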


\begin{proof} Because $\lambda \in \sigma_r(T^\star)$ there is $f \in C(K)$ such that $f \neq 0$ and $Tf = \lambda f$. Let $U_1 = \{k \in K : f(k) \neq 0\}$.
It follows easily from $|w| > 0$ on $K$ that $\varphi(U_1) = \varphi^{-1}(U_1) = U_1$. Let $K_1 = cl(U_1)$ then $\varphi(K_1) = K_1$ and the formula $T = wT_\varphi$ shows that $T$ induces a weighted composition operator $T_1 = w_1 T_{\varphi_1}$ on $C(K_1)$ where $w_1$ and $\varphi_1$ are restrictions of $w$ and $\varphi$, respectively, on $K_1$. Clearly $f_1 = f|K_1 \in C(K_1)$ and $T_1f_1 = \lambda f_1$. It follows from the fact that $(\lambda I - T)C(K) = C(K)$ and from the Tietze extension theorem that $(\lambda I - T_1)C(K_1) = C(K_1)$ whence $\lambda \in \sigma_r(T_1^\star)$. The set $\sigma_r(T_1^\star)$ is open in $\mathds{C}$ and therefore there is $\gamma \in \sigma_r(T_1^\star)$ such that $|\gamma| > |\lambda|$. Let $g$ be a nonzero function in $C(K_1)$ such that $T_1g = \gamma g$ and let $V = \{k \in K_1 : g(k) \neq 0\}$. Then $V$ is an open subset of $K_1$ and $\varphi_1^{-1}(V) = \varphi_1(V) = V$. We cannot claim that either $V$ is open in $K$ or that $\varphi^{-1}(V) = V$ but it is true that $V \subset \varphi^{-1}(V)$. Consider $U_2 = U_1 \cap V$; then it is immediate that  $U_2$ is an open nonempty subset of $K$ and recalling that $\varphi^{-1}(U_1) = U_1$ and $\varphi_1^{-1}(V) =V$ we see that $\varphi^{-1}(U_2) = \varphi(U_2) = U_2$. Let $K_2 = cl U_2$ and let $T_2$ be the operator on $C(K_2)$ defined similarly to the definition of $T_1$ above. Then
$\lambda \in \sigma_r(T_2^\star)$ and we can find $\delta \in \sigma_r(T_2^\star)$ such that $|\delta| < |\lambda|$. Let $h \in C(K_2)$ be such that $h \neq 0$ and $T_2 h = \delta h$. Let $W = \{k \in K_2 : \; h(k) \neq 0 \}$ and let $U = W \cap U_2$. As above we can show that $U$ is an open subset of $K$ and
$$\varphi^{-1}(U) = \varphi(U) = U \eqno{(13)}$$.

Now let $S \subset U$ be a $\varphi$-string. For any $n \in \mathds{N}$ we have $w_n(s_0)g(\varphi^n(s_0)) = \gamma^n g(s_0)$ whence
 $$|w_n(s_0) \geq |g(s_0)||\gamma|^n/\|g\|_{C(K_1)}. \eqno{(14)} $$
 On the other hand for every $n \in \mathds{N}$ we have $w_n(s_{-n})h(s_0) = \delta^n h(s_{-n})$ whence
 $$ |w_n(s_{-n} \leq |\delta|^n \|h\|_{C(K_2)}/h(s_0) \eqno{(15)}. $$
The statement of the lemma follows from $(13) - (15)$.
\end{proof}

\begin{definition} \label{d500} Assume conditions of Lemma~\ref{l200}. We will denote by $O(\lambda)$ the union of all open subsets of $K$ with properties (i) and (ii) from the statement of Lemma~\ref{l200}. Clearly $O(\lambda)$ is the largest (by inclusion) subset of $K$ with these properties.
\end{definition}

\begin{lemma} \label{l300} Assume conditions of Lemma~\ref{l200}. Let $O(\lambda)$ be from Definition~\ref{d500}. Then the set $K_\lambda = K \setminus O(\lambda)$ is not empty.
\end{lemma}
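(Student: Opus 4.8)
The plan is to argue by contradiction: suppose $O(\lambda) = K$. Then by Definition~\ref{d500} and Lemma~\ref{l200}, condition~$(12)$ holds for \emph{every} $\varphi$-string $S$ in $K$, since $\varphi(K)=K$ guarantees that every point lies on at least one $\varphi$-string. The first step is to extract from this global condition a bound on the spectral radius of $T$ and of a suitable ``inverse'' operator. Concretely, I would use the spectral radius formula for weighted composition operators in terms of $\varphi$-invariant measures (the analogue of the formula in Corollary~\ref{c3}(4), available from~\cite{Ki}): the condition $\liminf_n |w_n(s_0)|^{1/n} > |\lambda|$ along every string, combined with compactness, should force $\rho(T) $ to be controlled from below by $|\lambda|$ in a way incompatible with $\lambda \in \sigma(T)$, or rather it should force $\lambda \notin \sigma_{ap}(T)$ and simultaneously $\lambda \notin \sigma_{ap}(T^\star)$-type behavior, contradicting $\lambda \in \sigma(T)$ (recall $\lambda \in \sigma_r(T^\star) \subseteq \sigma(T)$).

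More carefully, here is the cleaner route. Since $\varphi(K) = K$ and $|w| > 0$, every $k \in K$ is $s_0$ for some $\varphi$-string $S$ (choose preimages under $\varphi$ inductively using surjectivity; the forward orbit is determined). If $O(\lambda) = K$ then for every such string the left inequality in~$(12)$ gives, by a standard compactness/subadditivity argument, a uniform estimate $|w_n(k)| \geq c\, \rho^n$ for all $k$, where $\rho > |\lambda|$ and $c > 0$ — equivalently the multiplication operators $w_n T_{\varphi^n}$ are uniformly bounded below, so $\lambda \notin \sigma_{ap}(T)$, i.e. $\lambda \in \sigma_r(T)$. But the right inequality in~$(12)$, $\limsup_n |w_n(s_{-n})|^{1/n} < |\lambda|$ uniformly, is exactly the condition that makes $\lambda \notin \sigma_{ap}(\tilde T)$ for the backward operator $\tilde T$ of $(\bigstar)$, hence (since $\sigma(\tilde T) = \sigma(T)$) that $\lambda \in \sigma_r(\tilde T)$, which translates to $(\lambda I - T)$ being bounded below on the conjugate side as well — i.e. $\lambda I - T$ is invertible. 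This contradicts $\lambda \in \sigma(T)$, and the lemma follows.

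The step I expect to be the main obstacle is making the passage ``$(12)$ holds on every string $\Rightarrow$ uniform exponential estimate'' rigorous. The inequalities in~$(12)$ are pointwise and involve $\liminf$/$\limsup$ that may a priori depend badly on the string, and different strings through a fixed point can fork backward (a point can have many $\varphi$-preimages), so one must be careful that the backward estimate $\limsup_n |w_n(s_{-n})|^{1/n} < |\lambda|$ really does hold along \emph{all} backward branches — which it does, precisely because $O(\lambda)$ is required to satisfy $\varphi^{-1}(U) = U$, so the whole backward tree stays inside $O(\lambda)$. Converting the family of pointwise $\limsup$ bounds into a single uniform bound then uses a Dini-type / compactness argument on $K$ together with the submultiplicativity $w_{n+m}(k) = w_n(k)\, w_m(\varphi^n(k))$. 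I would carry this out by first showing $\sup_k \limsup_n |w_n(\varphi^{-n}\text{-branch})|^{1/n}$ is attained and $< |\lambda|$ via a $\varphi$-invariant measure argument, then deducing the uniform geometric bound; the forward estimate is handled symmetrically. Once both uniform bounds are in hand, invertibility of $\lambda I - T$ is immediate from the criteria recalled in Theorems~\ref{t1} and~\ref{t2}, completing the contradiction.
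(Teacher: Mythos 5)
Your overall plan --- assume $O(\lambda)=K$ and upgrade the pointwise inequalities $(12)$ to a uniform geometric estimate by compactness --- is the paper's strategy, and the step you flag as the main obstacle is indeed the entire content of the proof. The paper resolves it by a covering-and-chaining argument rather than by invariant measures: normalizing $|\lambda|=1$, the first inequality in $(12)$ yields for each $k$ an $m(k)$ with $|w_{m(k)}|>2$ on a neighborhood $V(k)$; taking a finite subcover and using the cocycle identity $w_{n+m}(k)=w_n(k)\,w_m(\varphi^{(n)}(k))$ one chains these estimates along any backward branch $\{s_{-n}\}$ to produce, for every $n$, a $p$ with $0\le n-p\le\max m_i$ and $|w_p(s_{-n})|>2$; since $|w|$ is bounded away from zero on $K$, this contradicts $\limsup_n|w_n(s_{-n})|^{1/n}<|\lambda|$ directly. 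Note that the contradiction is purely between the two halves of $(12)$: the paper never needs to pass through a spectral statement about $\lambda I-T$ at this point.

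Two concrete problems with your version. First, the appeal to the backward operator $\tilde T$ of $(\bigstar)$ and to $\sigma(\tilde T)=\sigma(T)$ is illegitimate in Section 5: there $\varphi$ is an open surjection that need not be injective, so $f\mapsto w\cdot(f\circ\varphi^{(-1)})$ is not defined and backward orbits branch; those facts were established only for homeomorphisms. This detour is also unnecessary, because surjectivity of $\lambda I-T$ is already part of the hypothesis $\lambda\in\sigma_r(T^\star)$, so once your forward uniform bound gives $\lambda\notin\sigma_{ap}(T)$ you may conclude $\lambda I-T$ is invertible and stop. Second, the uniformization itself --- the passage from the pointwise $\liminf$/$\limsup$ in $(12)$ to a uniform exponential bound --- is only sketched; you name two possible strategies but carry out neither, and for the backward half the branching of preimages makes an invariant-measure argument genuinely delicate. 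If you replace both items by the covering-and-chaining argument above (applied to the forward inequality, then played off against the backward inequality along a single string), your proof closes and essentially coincides with the paper's.
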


\begin{proof} If $K_\lambda = \emptyset$ then $O(\lambda)= K$ is a compact Hausdorff space. We can assume without loss of generality that $|\lambda| = 1$. It follows from the definition of $O(\lambda)$ that for every $k \in K$ there are an open neighborhood $V(k)$ of $k$ and $m(k) \in \mathds{N}$  such that $|w_{m(k)}(t)| > 2, t \in V(k)$. Let $\{V(k_1), \ldots , V(k_s)\}$ be a finite subcover of the cover $\{V(k): k \in K\}$ and let $m_i = m(k_i), i=1, \ldots , s$. Next let us fix $k_0 \in K $. Then for any $n \in \mathds{N}$ we can find $p \in \mathds{N}$ such that $0 \leq n - p \leq \max{\{m_1, \ldots, m_s\}}$ and
$|w_p(\varphi^{-n}(k_0)| > 2$ in obvious contradiction with the second inequality in $(12)$.

\end{proof}

\begin{lemma} \label{l400} Assume conditions of Lemma~\ref{l200}. Let $O(\lambda)$ be from Definition~\ref{d500}. Let $T_\lambda$ be the weighted composition operator induced by $T$ on $C(K_\lambda)$ where $K_\lambda = K \setminus O(\lambda)$.

Then $\lambda \not \in \sigma(T_\lambda)$.

\end{lemma}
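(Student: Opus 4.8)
The plan is to show that $\lambda I - T_\lambda$ is both bounded below and surjective on $C(K_\lambda)$. Surjectivity I would get essentially for free: since $(\lambda I - T)C(K) = C(K)$ and $K_\lambda = K\setminus O(\lambda)$ is a closed $\varphi$-invariant subset of $K$ (note $\varphi(K_\lambda) = \varphi^{-1}(K_\lambda) = K_\lambda$ because $\varphi(O(\lambda)) = \varphi^{-1}(O(\lambda)) = O(\lambda)$ by property (i) in Lemma~\ref{l200}), the Tietze extension theorem lets me lift any $g \in C(K_\lambda)$ to $\tilde g \in C(K)$, solve $(\lambda I - T)\tilde f = \tilde g$, and restrict to $K_\lambda$; the restriction commutes with the operator since $K_\lambda$ is $\varphi$-invariant. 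This is exactly the device already used in the proof of Lemma~\ref{l200}. So the whole content is to prove $\lambda \notin \sigma_{ap}(T_\lambda)$, i.e. that there is no sequence $f_n \in C(K_\lambda)$ with $\|f_n\| = 1$ and $(\lambda I - T_\lambda)f_n \to 0$.

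The main work is the approximate-eigenvalue part, and here I would argue by contradiction using Lemma~\ref{l1}. Suppose $\lambda \in \sigma_{ap}(T_\lambda)$. Since $\lambda \neq 0$ by Lemma~\ref{l100} (applied to $T_\lambda$, whose inducing map is still an open surjection with $|w|>0$), Lemma~\ref{l1} — or rather its analogue for open surjective $\varphi$, which is what~\cite{Ki} actually provides, since Lemma~\ref{l1} as quoted is for homeomorphisms; I would cite the general $C(K)$ version — produces a point $k \in K_\lambda$ and inequalities of the form $|w_n(k)| \geq |\lambda|^n$ along a forward orbit and $|w_n(\varphi^{(-n)}(k))| \leq |\lambda|^n$ along some backward branch, for all $n$. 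The idea is then to use such a $k$ to build, locally around $k$, an open set with properties (i) and (ii) of Lemma~\ref{l200}, which would force $k \in O(\lambda)$, contradicting $k \in K_\lambda$. Concretely: the strict inequalities in $(12)$ defining $O(\lambda)$ — $\liminf|w_n(s_0)|^{1/n} > |\lambda|$ and $\limsup|w_n(s_{-n})|^{1/n} < |\lambda|$ — are, by continuity and compactness, \emph{open} conditions on strings, so once I know they fail at $k$ I know $k$ cannot be separated from the ``bad'' set by such a neighborhood; conversely the non-strict inequalities from Lemma~\ref{l1} at $k$ are precisely the borderline case that must be excluded.

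The delicate point — and what I expect to be the main obstacle — is bridging the gap between the \emph{non-strict} inequalities $|w_n(k)| \geq |\lambda|^n$, $|w_n(\varphi^{(-n)}(k))| \leq |\lambda|^n$ coming out of Lemma~\ref{l1} and the \emph{strict} spectral-radius inequalities built into the definition of $O(\lambda)$. The resolution should mimic the proof of Lemma~\ref{l200}: there one passes to restrictions $T_1, T_2$ on $cl U_1, cl U_2$ and uses that $\sigma_r(T_i^\star)$ is \emph{open} to find $\gamma$ with $|\gamma| > |\lambda|$ and $\delta$ with $|\delta| < |\lambda|$, which upgrade the borderline estimates to strict ones $(14)$--$(15)$. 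So if $\lambda \in \sigma_r(T_\lambda^\star)$ we could run that argument inside $K_\lambda$ and enlarge $O(\lambda)$, a contradiction with maximality. The remaining case is $\lambda \in \sigma_{ap}(T_\lambda)$ with $(\lambda I - T_\lambda)C(K_\lambda) \neq C(K_\lambda)$ — but surjectivity was already established in the first paragraph, so this case is vacuous, and we are done. I would structure the write-up as: (1) $K_\lambda$ is closed and bi-$\varphi$-invariant; (2) surjectivity of $\lambda I - T_\lambda$ via Tietze; (3) if $\lambda \in \sigma(T_\lambda)$ then, by (2), $\lambda \in \sigma_r(T_\lambda^\star)$, so the construction of Lemma~\ref{l200} applies to $T_\lambda$ and produces an open $U' \subseteq K_\lambda$ with properties (i), (ii); (4) but then $U' \subseteq O(\lambda)$, so $U' \cap K_\lambda = \emptyset$, contradicting $U' \neq \emptyset$. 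Hence $\lambda \notin \sigma(T_\lambda)$.
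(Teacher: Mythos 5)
Your proof is correct and follows essentially the same route as the paper: establish $(\lambda I - T_\lambda)C(K_\lambda)=C(K_\lambda)$ via Tietze, conclude that $\lambda\in\sigma(T_\lambda)$ would force $\lambda\in\sigma_r(T_\lambda^\star)$, and then apply Lemma~\ref{l200} on $K_\lambda$ to produce an open set contradicting the maximality of $O(\lambda)$ (your detour through Lemma~\ref{l1} is unnecessary, as you yourself conclude). The one small point to tidy is that the set $V$ produced is only open in the closed set $K_\lambda$, so rather than asserting $V\subseteq O(\lambda)$ outright one should note, as the paper does, that $O(\lambda)\cup V$ is open in $K$ and still has properties (i) and (ii).
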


\begin{proof} First of all notice that because $\varphi^{-1}(O(\lambda)) = O(\lambda)$ we have $\varphi(K_\lambda) = K_\lambda$ and the operator $T_\lambda$
is correctly defined. Moreover, $(\lambda I - T_\lambda) C(K_\lambda) = C(K_\lambda)$. Assume contrary to the statement of the lemma that $\lambda \in \sigma(T_\lambda)$. Then $\lambda \in \sigma_r(T_\lambda ^\star)$ and by Lemma~\ref{l200} there is an open subset $V$ of $K_\lambda$ with properties (i) and (ii) from the statement of that lemma. The set $O(\lambda) \cup V$ is open in $K$ and has properties (i) and (ii) in contradiction with maximality of $O(\lambda)$.
\end{proof}

\begin{lemma} \label{l500} Assume conditions of Lemma~\ref{l200}. Let $F_\lambda = cl O(\lambda) \setminus O(\lambda)$. Let $T^\lambda$ be the operator induced by $T$ on $C(F_\lambda)$.

Then $\sigma(T^\lambda, C(F_\lambda)) \cap \lambda \Gamma = \emptyset$.

\end{lemma}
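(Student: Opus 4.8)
The plan is to argue by contradiction, playing off the maximality of $O(\lambda)$ (Definition~\ref{d500}) against Lemma~\ref{l400}. The preliminary step is to record that $T^\lambda$ really is defined. Since $\varphi$ is a continuous surjection of a compact Hausdorff space it is a closed map, so $\varphi(\mathrm{cl}\,O(\lambda))$ is closed and contains $\varphi(O(\lambda))=O(\lambda)$, whence $\varphi(\mathrm{cl}\,O(\lambda))=\mathrm{cl}\,O(\lambda)$; combined with $\varphi(O(\lambda))=\varphi^{-1}(O(\lambda))=O(\lambda)$ (property (i) of $O(\lambda)$, see Lemma~\ref{l200}) this gives $\varphi(F_\lambda)=F_\lambda$. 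Thus $T^\lambda$ is well defined on $C(F_\lambda)$, $F_\lambda=\mathrm{cl}\,O(\lambda)\setminus O(\lambda)$ is precisely the topological boundary of $O(\lambda)$ in $K$, and $F_\lambda\subseteq K_\lambda$; note also $|w|>0$ on $F_\lambda$, so Lemmas~\ref{l100} and~\ref{l200} apply to $T^\lambda$. Normalize $|\lambda|=1$ and assume, for contradiction, that some $\gamma$ with $|\gamma|=1$ lies in $\sigma(T^\lambda)$. I would then split on whether $\gamma I-T^\lambda$ is surjective.

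\emph{Case 1: $(\gamma I-T^\lambda)C(F_\lambda)=C(F_\lambda)$.} Then $\gamma\in\sigma_r((T^\lambda)^\star)$, so Lemma~\ref{l200} applied to $T^\lambda$ produces a nonempty open $V\subseteq F_\lambda$ with $\varphi(V)=\varphi^{-1}(V)=V$ and such that every $\varphi$-string $S\subseteq V$ satisfies $\liminf_n|w_n(s_0)|^{1/n}>|\gamma|=1$ and $\limsup_n|w_n(s_{-n})|^{1/n}<1$. The plan is to enlarge $O(\lambda)$ by adjoining $V$: one would choose an open $\widetilde V\subseteq K$ with $\widetilde V\setminus O(\lambda)=V$ (equivalently $\widetilde V\cap K_\lambda=V$), so that $O(\lambda)\cup\widetilde V=O(\lambda)\cup V$ is open in $K$, is a disjoint union of the two fully $\varphi$-invariant sets $O(\lambda)$ and $V$, and hence has properties (i) and (ii) of Definition~\ref{d500} — because a $\varphi$-string inside $O(\lambda)\cup V$ lies entirely in one piece, in which case (ii) is known — while strictly containing $O(\lambda)$. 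This contradicts maximality of $O(\lambda)$.

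\emph{Case 2: $(\gamma I-T^\lambda)$ is not surjective.} Then $\gamma\in\sigma_{ap}(T^\lambda)$ or $\gamma$ is an eigenvalue of the Banach conjugate $(T^\lambda)^\prime$, so by the analogue of Lemma~\ref{l1} for surjections from~\cite{Ki} (respectively its version for the conjugate operator) there are a point $s_0\in F_\lambda$ and a $\varphi$-string $S=\{s_i\}\subseteq F_\lambda$ with $|w_n(s_0)|\ge 1$ and $|w_n(s_{-n})|\le 1$ for all $n$ (the two inequalities swapped in the adjoint case). If $s_0$ is not $\varphi$-periodic then $S\subseteq K_\lambda$, and — after removing the finitely many isolated periodic points of $K_\lambda$ that carry spectrum on $\Gamma$ and invoking the rotation invariance available on the remainder (the machinery of Theorems~\ref{t6},~\ref{t7} and~\cite{Ki}) — this string would force $\lambda\in\sigma(T_\lambda)$, contradicting Lemma~\ref{l400}. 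If $s_0$ is $\varphi$-periodic of period $p$, applying the inequalities along multiples of $p$ yields $|w_p(s_0)|=1$; but then the finite $\varphi$-orbit of $s_0$ is a $\varphi$-string in $\mathrm{cl}\,O(\lambda)$ with forward and backward growth rate exactly $1$, which is incompatible with $s_0\in\partial_K O(\lambda)$ — points of $O(\lambda)$ arbitrarily near that orbit expand forward at rate $>1$ and contract backward at rate $<1$, and this cannot persist while shadowing a neutral periodic orbit — so $s_0\notin\mathrm{cl}\,O(\lambda)$, a contradiction.

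The two hard points are exactly the ones flagged. In Case~1 the difficulty is purely topological: $V$ is open only in the closed set $F_\lambda$, not a priori in $K_\lambda$, so producing the open-in-$K$ set $\widetilde V$ — i.e.\ passing from ``open and fully $\varphi$-invariant in $F_\lambda$'' to ``open and fully $\varphi$-invariant in $K$'' without damaging the string estimate (ii) — requires either enlarging $V$ inside $K_\lambda$ while preserving (ii), or a separate argument controlling $\mathrm{int}_K K_\lambda$ near $V$. In Case~2 the obstacle is excluding a neutral $\varphi$-periodic orbit on the boundary of the (``hyperbolically'' behaved) set $O(\lambda)$, together with routing the residual part of $\sigma(T^\lambda)\cap\lambda\Gamma$ through the Banach conjugate and the reduction of Section~4. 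Everything else is routine bookkeeping with $\varphi$-strings as in Lemmas~\ref{l100}--\ref{l400}.
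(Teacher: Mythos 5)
Both of the ``hard points'' you flag are genuine gaps, and the paper's proof goes around them by a different route in each case. For $\gamma=\lambda$ you do not need Lemma~\ref{l200} or maximality at all: since $(\lambda I-T)C(K)=C(K)$ and $\varphi(F_\lambda)=F_\lambda$, the Tietze theorem gives $(\lambda I-T^\lambda)C(F_\lambda)=C(F_\lambda)$, so if $\lambda\in\sigma(T^\lambda)$ the operator $\lambda I-T^\lambda$ fails to be injective and there is a nonzero $g\in C(F_\lambda)$ with $T^\lambda g=\lambda g$. Because $F_\lambda$ is a closed subset of $K_\lambda$, this $g$ is identified with an element $g^{\star\star}$ of $C(K_\lambda)^{\star\star}$ satisfying $T_\lambda^{\star\star}g^{\star\star}=\lambda g^{\star\star}$, which contradicts Lemma~\ref{l400}. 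Your maximality argument, by contrast, cannot be repaired in general: the set $V$ produced by Lemma~\ref{l200} applied to $T^\lambda$ is open only in $F_\lambda=\mathrm{cl}\,O(\lambda)\setminus O(\lambda)$, and whenever $\mathrm{cl}\,O(\lambda)\neq K$ a $K$-neighborhood of a point of $V$ meets $K\setminus\mathrm{cl}\,O(\lambda)$, so $O(\lambda)\cup V$ is not open in $K$ and Definition~\ref{d500} gives no contradiction. The maximality of $O(\lambda)$ is the right tool for Lemma~\ref{l400} (where the competing set lives inside $K_\lambda$ and can be united with $O(\lambda)$ to form an open subset of $K$), but not here.

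For the remaining points $\gamma\in\lambda\Gamma$, $\gamma\neq\lambda$, the paper does not attempt to exclude a neutral periodic orbit from $\partial_K O(\lambda)$ geometrically, and your shadowing heuristic is not a proof: the conditions $(12)$ are asymptotic ($\liminf$/$\limsup$ as $n\to\infty$) and place no constraint on the finite-time behavior of $w_n$ along points of $O(\lambda)$ near a periodic orbit, so nothing prevents $s_0\in\mathrm{cl}\,O(\lambda)$. The actual argument is: once $\lambda\notin\sigma(T^\lambda)$ is known, the circle $|\lambda|\Gamma$ is not contained in $\sigma(T^\lambda)$, so by Theorem 3.12 of~\cite{Ki} any $\gamma\in\sigma(T^\lambda)$ with $|\gamma|=|\lambda|$ must arise from a $\varphi$-periodic point $t\in F_\lambda$ with $\gamma^p=w_p(t)$. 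Since $O(\lambda)$ contains no eventually $\varphi$-periodic points and $t$ is a limit of points of $O(\lambda)$, the construction in the proof of Lemma 3.5 of~\cite{Ki} produces approximate eigenvectors of $T^\star$ for every point of $\lambda\Gamma$, i.e. $\lambda\Gamma\subset\sigma_{ap}(T^\star)$ --- and the contradiction is with the standing hypothesis $\lambda\in\sigma_r(T^\star)$, not with any local property of $O(\lambda)$. So the correct target of the contradiction in your Case 2 is $\lambda\notin\sigma_{ap}(T^\star)$, and the periodic point is used, not excluded.
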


\begin{proof} We will prove first that $\lambda \not \in \sigma(T^\lambda)$. Indeed, because $\varphi(F_\lambda) = F_\lambda$ we have
$(\lambda I - T^\lambda)C(F_\lambda) = C(F_\lambda)$ and therefore, if $\lambda \in \sigma(T^\lambda)$, then there is a nonzero $g \in C(F_\lambda)$ such that
$T^\lambda g = \lambda g$. But $F_\lambda$ is a closed subset of $K_\lambda$ and therefore $g$ can be identified with an element $g^{ \star \star}$ in
$C(K_\lambda)^{\star \star}$ such that $T_\lambda^{ \star \star} g^{ \star \star} = \lambda g^{\star \star}$ in contradiction with Lemma~\ref{l400}.

Next assume that there is $\gamma \in \sigma(F_\lambda)$ such that $|\gamma| = |\lambda|$. Then by Theorem 3.12 in~\cite{Ki} there is a $\varphi$-periodic point $t \in F_\lambda$ such that $\gamma^p = w_p(t)$ where $p$ is the smallest positive period of $t$. It follows from $(1)$ and from $\varphi(O(\lambda)) = O(\lambda)$ that the set $O(\lambda)$ cannot contain eventually \footnote{A point $k \in K$ is called eventually $\varphi$-periodic if there is an $n \in \mathds{N}$ such that the point $\varphi^{(n)}(k)$ is $\varphi$-periodic.}  $\varphi$-periodic points. Then it follows from the proof of Lemma 3.5 in~\cite{Ki} that
$\lambda \Gamma \subset \sigma_{ap}(T^\star)$, a contradiction.
\end{proof}

\begin{lemma} \label{l600} There is a closed subset $G_\lambda$ of $F_\lambda$ with the following properties.
\begin{enumerate}
  \item $\varphi(G_\lambda) = G_\lambda$.
  \item The restriction of $\varphi$ on $G_\lambda$ is a homeomorphism of $G_\lambda$ onto itself.
  \item The operator $T_{G_\lambda}$ defined on $C(G_\lambda)$ by the formula $T(f|G_\lambda) = (Tf)|G_\lambda$ is invertible and
  $\rho(T_{G_\lambda}^{-1}) < 1/|\lambda|$.
  \item $\exists m \in \mathds{N}$ such that $G_\lambda \subset Int_{F_\lambda} \varphi^{-m}(G_\lambda)$.
  \item Let $H_\lambda = F_\lambda \setminus \bigcup \limits_{n = 1}^\infty \varphi^{-n}(G_\lambda)$. Then $H_\lambda$ is a closed subset of $F_\lambda$,
  $\varphi(H_\lambda) = H_\lambda$, and $\rho(T_{H_\lambda}) < |\lambda|$.
\end{enumerate}

\end{lemma}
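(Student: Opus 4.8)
The strategy is to reduce the whole statement to the spectral decomposition of $T^\lambda$ furnished by Lemma~\ref{l500}. Normalise $\lambda=1$ by replacing $w$ with $\lambda^{-1}w$ (this rescales all the quantities $w_n$ by $\lambda^{-n}$ and changes nothing in the estimates that follow). Two preliminary remarks. First, since $\varphi^{-1}(O(\lambda))=\varphi(O(\lambda))=O(\lambda)$, a routine argument with nets shows $\varphi(F_\lambda)=F_\lambda$, so that $T^\lambda=(w|_{F_\lambda})T_{\varphi|_{F_\lambda}}$ is a well-defined weighted composition operator on $C(F_\lambda)$. Second, Lemma~\ref{l500} gives $\lambda\Gamma\cap\sigma(T^\lambda)=\emptyset$, and in particular $\lambda\notin\sigma(T^\lambda)$, so on $C(F_\lambda)$ the operator $\lambda I-T^\lambda$ is actually \emph{invertible} --- not merely surjective, which is all we know on $C(K)$.

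Next I invoke the structure theory of weighted composition operators developed in~\cite{Ki} (Theorem 3.10 there --- the same result already used, in the homeomorphism case, in the proof of Lemma~\ref{l3}), applied to $T^\lambda$ on $C(F_\lambda)$. Because $\lambda I-T^\lambda$ is both left- and right-invertible, the ``transit'' set of that decomposition must be empty: this is the exact analogue of the elementary fact, visible in Theorem~\ref{t1}, that $R\wedge L\Leftrightarrow A\wedge B\wedge C$ forces $Q=\emptyset$ (a transit string cannot run forward both into the expanding set and into the contracting set). Hence
$$F_\lambda=E\cup G_\lambda,\qquad E\cap G_\lambda=\emptyset,\qquad \sigma(T_E)\subset\{|\xi|<1\},\qquad \sigma(T_{G_\lambda})\subset\{|\xi|>1\},$$
with $E$ and $G_\lambda$ closed and $\varphi$-invariant; since they partition $F_\lambda$, each is clopen in $F_\lambda$. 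I take $G_\lambda$ to be this ``expanding'' part and claim it satisfies (1)--(5).

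Properties (1)--(3) are then immediate. By construction $\varphi(G_\lambda)=G_\lambda$. As $\lambda\neq0$ (Lemma~\ref{l100}) and $\sigma(T_{G_\lambda})\subset\{|\xi|>1\}$, we have $0\notin\sigma(T_{G_\lambda})$, so $T_{G_\lambda}$ is invertible on $C(G_\lambda)$; since $|w|>0$, multiplication by $w|_{G_\lambda}$ is invertible, hence the composition operator $T_{\varphi|_{G_\lambda}}$ is invertible, and a surjective composition operator forces the underlying map to be injective, so $\varphi|_{G_\lambda}$ is a homeomorphism of $G_\lambda$ onto itself; finally $\rho(T_{G_\lambda}^{-1})=\max\{|\xi|^{-1}:\xi\in\sigma(T_{G_\lambda})\}<1=1/|\lambda|$. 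For (4): $G_\lambda$ is open in $F_\lambda$ and $\varphi(G_\lambda)=G_\lambda$ gives $G_\lambda\subseteq\varphi^{-1}(G_\lambda)$, whence $G_\lambda=Int_{F_\lambda}(G_\lambda)\subseteq Int_{F_\lambda}\varphi^{-1}(G_\lambda)$, so (4) holds with $m=1$. For (5): each $\varphi^{-n}(G_\lambda)$ is clopen in $F_\lambda$, so $\bigcup_{n\geq1}\varphi^{-n}(G_\lambda)$ is open and $H_\lambda$ is closed; the identity $\varphi(H_\lambda)=H_\lambda$ follows formally from $\varphi(G_\lambda)\subseteq G_\lambda$ and $\varphi(F_\lambda)=F_\lambda$; and since $G_\lambda\subseteq\varphi^{-1}(G_\lambda)$ we get $H_\lambda\cap G_\lambda=\emptyset$, i.e. $H_\lambda\subseteq F_\lambda\setminus G_\lambda=E$, so that $\|T_{H_\lambda}^{n}\|=\sup_{t\in H_\lambda}|w_n(t)|\leq\sup_{t\in E}|w_n(t)|=\|T_E^{n}\|$ and therefore $\rho(T_{H_\lambda})\leq\rho(T_E)=\max\{|\xi|:\xi\in\sigma(T_E)\}<1=|\lambda|$.

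The only step that is not bookkeeping is the passage from Lemma~\ref{l500} to the clopen splitting $F_\lambda=E\cup G_\lambda$: for a genuinely non-invertible $\varphi$ the decomposition in~\cite{Ki} may a priori carry a transit set, and the work is in ruling it out. This is exactly where one uses that $\lambda I-T^\lambda$ is \emph{invertible} on $C(F_\lambda)$ (Lemma~\ref{l500}), not merely surjective; and, should one prefer to argue directly rather than cite~\cite{Ki} in the non-invertible case, one uses the maximality of $O(\lambda)$ together with Definition~\ref{d300} to show that a would-be transit $\varphi$-string in $F_\lambda$ would already have been absorbed into $O(\lambda)$. Everything else is a direct consequence of that decomposition.
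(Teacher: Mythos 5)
The paper's proof is a one-line citation of Lemma~\ref{l500} together with Theorem 3.10 of \cite{Ki}, and up to that point you are on the same track: you correctly reduce everything to ``$\lambda I - T^\lambda$ is invertible on $C(F_\lambda)$, now apply the structure theorem''. The gap is in what you claim that structure theorem gives. You assert that, since there can be no transit set, $F_\lambda$ splits into two disjoint closed $\varphi$-invariant sets $E$ and $G_\lambda$ (hence clopen) with $\sigma(T_E)\subset\{|\xi|<|\lambda|\}$ and $\sigma(T_{G_\lambda})\subset\{|\xi|>|\lambda|\}$, and you then read off (4) with $m=1$ and (5) with $H_\lambda\subseteq E$. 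That splitting is the correct conclusion only when $\varphi$ is a homeomorphism --- which is the setting of every place in this paper where Theorem 3.10 is quoted in that form (e.g.\ Lemma~\ref{l3}). For a non-invertible surjection it fails, and the obstruction is not the transit set $Q$ of Theorem~\ref{t1} but something you cannot rule out: the basin of attraction of $G_\lambda$. Concretely, let $K=\{0\}\cup\mathds{N}\cup\{\infty\}$ be the one-point compactification of $\mathds{N}\cup\{0\}$, let $\varphi(n)=\max(n-1,0)$ and $\varphi(\infty)=\infty$ (an open, non-injective surjection), and let $w(0)=3$, $w\equiv 1/3$ elsewhere. Then $1\notin\sigma(wT_\varphi)$, and the only admissible expanding core is $G=\{0\}$; but its complement $\mathds{N}\cup\{\infty\}$ is not $\varphi$-invariant (indeed $\varphi(1)=0$), so your set $E$ does not exist and $T_E$ is not even defined. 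The invariant ``small'' set is $H=\{\infty\}=K\setminus\bigcup_n\varphi^{-n}(G)$, strictly smaller than $K\setminus G$.

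This is exactly why the lemma is stated with conditions (4) and (5) in the form they have: $G_\lambda$ need not be open in $F_\lambda$, only eventually interior to its own $\varphi$-preimages; the points of $\bigcup_n\varphi^{-n}(G_\lambda)\setminus G_\lambda$ fall into $G_\lambda$ in finite time without forming an invariant set; and $H_\lambda$ is the residual invariant set off the basin. These conditions are the conclusion of the non-invertible version of Theorem 3.10 in \cite{Ki} applied to $T^\lambda$ on $C(F_\lambda)$, with Lemma~\ref{l500} supplying the hypothesis $\lambda\Gamma\cap\sigma(T^\lambda)=\emptyset$ (which is also what yields the strict spectral-radius inequalities in (3) and (5)); they are not consequences of a clopen two-block decomposition. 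Your derivations of (2), (4) and (5) all lean on that nonexistent decomposition --- in particular the inequality $\rho(T_{H_\lambda})\leq\rho(T_E)$ compares against an operator that need not exist. To repair the argument you must either quote Theorem 3.10 of \cite{Ki} in its non-invertible form, whose conclusion is literally (1)--(5), or reconstruct the basin structure directly from the maximality of $O(\lambda)$, which is a genuinely different and longer argument than the one you sketch.
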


\begin{proof} The proof follows immediately from Lemma~\ref{l500} and Theorem 3.10 in~\cite{Ki}.

\end{proof}

\begin{lemma} \label{l700} Let the map $\varphi$ be open and let $G_\lambda$ be the set from the statement of Lemma~\ref{l500}. Then there is an open neighborhood $V$ of $G_\lambda$ in $K$ such that the map $\varphi : V \to K$ is one-to-one.

\end{lemma}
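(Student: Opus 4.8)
The plan is to separate a purely topological reduction from the dynamical core.

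\smallskip
\noindent\emph{Step 1 (reduction to local injectivity).} I would first show it is enough to prove that $\varphi$ is \emph{locally} one-to-one at every point of $G_\lambda$, i.e. that each $g\in G_\lambda$ has an open neighbourhood $W_g$ with $\varphi|_{W_g}$ injective. Granting this, suppose toward a contradiction that no open $V\supseteq G_\lambda$ has $\varphi|_V$ injective. The open neighbourhoods of $G_\lambda$, directed by reverse inclusion, yield a net: for each such $V$ choose $x_V\ne y_V$ in $V$ with $\varphi(x_V)=\varphi(y_V)$. Passing to a subnet, $(x_V,y_V)\to(x,y)$ in the compact space $K\times K$; since $K$ is regular and $G_\lambda$ closed one has $\bigcap_V\overline{V}=G_\lambda$, so $x,y\in G_\lambda$; continuity gives $\varphi(x)=\varphi(y)$, and by Lemma~\ref{l600}(2) $\varphi|_{G_\lambda}$ is injective, whence $x=y=:g\in G_\lambda$. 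Then eventually $x_V,y_V\in W_g$, contradicting injectivity of $\varphi|_{W_g}$. So the lemma reduces to local injectivity along $G_\lambda$.

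\smallskip
\noindent\emph{Step 2 (local injectivity at $g\in G_\lambda$).} This is where openness of $\varphi$ enters. Suppose $\varphi$ is not locally one-to-one at some $g\in G_\lambda$: there are $x_i\ne y_i$ in $K$ with $x_i\to g$, $y_i\to g$ and $\varphi(x_i)=\varphi(y_i)=:z_i\to\varphi(g)\in G_\lambda$. By Lemma~\ref{l100} we may take $|\lambda|=1$. Since $\varphi^{-1}(O(\lambda))=O(\lambda)$ (so $K_\lambda=K\setminus O(\lambda)$ is fully $\varphi$-invariant), after passing to a subsequence the pairs $(x_i,y_i)$ lie either all in $O(\lambda)$ or all in $K_\lambda$. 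Now I would propagate this ``fold'' backwards along the (unique, by Lemma~\ref{l600}(2)) backward $\varphi|_{G_\lambda}$-orbit $g,g_{-1},g_{-2},\dots$: because $\varphi$ is open, for every neighbourhood $N$ of $g_{-1}=(\varphi|_{G_\lambda})^{-1}(g)$ and all large $i$ there are $u_i,v_i\in N$ with $\varphi(u_i)=x_i$, $\varphi(v_i)=y_i$; then $u_i\ne v_i$, $u_i,v_i\to g_{-1}$ and $\varphi^2(u_i)=\varphi^2(v_i)$. Iterating, for each $n$ the map $\varphi^n$ ``folds'' at $g_{-(n-1)}\in G_\lambda$, with common images tending to $\varphi(g)$. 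Along these folds $|w_n|$ at the fold points tends to $|w_n(g_{-(n-1)})|=|w(g)|\prod_{k=1}^{n-1}|w((\varphi|_{G_\lambda})^{-k}(g))|$, and Lemma~\ref{l600}(3), $\rho(T_{G_\lambda}^{-1})<1/|\lambda|$, forces this product to grow geometrically faster than $|\lambda|^{n}$, uniformly over $G_\lambda$; while Lemma~\ref{l600}(4), $G_\lambda\subseteq Int_{F_\lambda}\varphi^{-m}(G_\lambda)$, is what confines these fold points and their $\varphi$-preimage trees to a single neighbourhood of $G_\lambda$ on which the estimates are uniform. From this one assembles normalized regular Borel measures $\mu_j$ on $K$ (finite combinations of Dirac masses on the backward trees produced by the folds) with $\|T^{\star}\mu_j-\lambda\mu_j\|\to0$, which is incompatible with $\lambda\in\sigma_r(T^{\star})$, i.e. with $(\lambda I-T)C(K)=C(K)$. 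That contradiction proves local injectivity at $g$, and with Step~1 the lemma.

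\smallskip
\noindent\emph{The main obstacle} is precisely the passage in Step~2 from topological ``folding'' to the measure-theoretic contradiction. A measure carried by the single backward $G_\lambda$-orbit of $g$ is \emph{not} an eigenvector of $T^{\star}$: it leaks at the front, and since $\varphi$ is expanding along $G_\lambda$ in the forward direction one cannot close it up with a finite measure. The point of the fold is to supply genuinely extra Dirac mass at each backward level, so that together with the uniform backward expansion of Lemma~\ref{l600}(3) and the ``opening up'' of the preimage trees from Lemma~\ref{l600}(4) the $\mu_j$ become honest approximate eigenvectors (and do not degenerate to $0$); making all the estimates uniform is the real work. If this route turns out to be awkward, the same contradiction should be obtainable by extracting from the folds a point of $\sigma(T^{\lambda},C(F_\lambda))\cap\lambda\Gamma$, contradicting Lemma~\ref{l500}.
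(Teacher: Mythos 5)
Your topological reduction in Step 1 is sound, and the backward-lifting in Step 2 (using openness of $\varphi$ to pull the colliding pair back along the orbit of $G_\lambda$) produces essentially the same configuration the paper works with: two points $s\neq t$ in a neighbourhood of $G_\lambda$ on which $|w_{n+1}|>2$, with $\varphi^{(n)}(s)=p\neq q=\varphi^{(n)}(t)$ but $\varphi(p)=\varphi(q)$. However, the analytic core --- turning this ``fold'' into an approximate eigenmeasure of $T^\star$ --- is exactly the step you leave open, and the mechanism you propose for it (the fold ``supplies genuinely extra Dirac mass at each backward level'') is not the one that works. A single orbit-sum of Dirac masses does leak at the front, as you observe, and no amount of extra mass injected along the way closes it up.

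The paper's resolution is a cancellation, not an accumulation: one takes the \emph{difference} of the two weighted orbit sums, $\mu=\sum_{i=0}^{n}(1-1/\sqrt{n})^{\,n-i}(T^\star)^i\mu_1-\sum_{i=0}^{n}(1-1/\sqrt{n})^{\,n-i}(T^\star)^i\mu_2$ with $\mu_1=\frac{1}{w_{n+1}(s)}\delta_s$, $\mu_2=\frac{1}{w_{n+1}(t)}\delta_t$. Because the two orbits merge at step $n+1$ (namely $(T^\star)^{n+1}\mu_1=(T^\star)^{n+1}\mu_2=\delta_{\varphi(p)}$), the forward leak of $T^\star\mu-\mu$ cancels identically; the backward leak is the single term $(1-1/\sqrt{n})^{n}(\mu_1-\mu_2)$, which is small precisely because $s,t$ lie in $Q_n=\{|w_{n+1}|>2\}$ so that $\|\mu_i\|\le 1/2$; and $\|\mu\|$ stays bounded below because $\delta_p$ and $\delta_q$ are disjoint ($p\neq q$), so the two top terms cannot cancel. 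This yields $\|T^\star\mu-\mu\|\le(1-1/\sqrt{n})^{n}+\|\mu\|/\sqrt{n}$ and hence $\lambda\in\sigma_{ap}(T^\star)$, contradicting $\lambda\in\sigma_r(T^\star)$. Note also that with this device your Step 1 becomes unnecessary: the paper proves injectivity of $\varphi$ on the explicit neighbourhood $R_n=\varphi^{(n)}(Q_n)$ of $G_\lambda$ in one stroke, since any collision anywhere in $R_n$ feeds the same construction; your localization at a single $g\in G_\lambda$, and the case split according to whether the colliding points lie in $O(\lambda)$ or its complement, do not simplify the part that is actually hard. Your fallback suggestion (extracting a point of $\sigma(T^\lambda,C(F_\lambda))\cap\lambda\Gamma$) is not developed enough to assess and is not how the paper proceeds.
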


\begin{proof} We can assume without loss of generality that $\lambda = 1$. For any $n \in \mathds{N}$ let $Q_n = \{k \in K : \; |w_{n+1}(k)| > 2\}$. It follows from the inequality $\rho(T_{G_\lambda}^{-1}) < 1$ that for any large enough $n \in \mathds{N}$ the set $Q_n$ is an open neighborhood of $G_\lambda$. Let $R_n = \varphi^n(Q_n)$, then $R_n$ is an open neighborhood of $G_\lambda$ because $\varphi$ is open. We claim that for any large enough $n \in \mathds{N}$ the map
$\varphi : R_n \to K$ is one-to-one. Assume to the contrary that for any $N \in \mathds{N}$ there are an $n > N$ and $p, q \in R_n$ such that $p \neq q$ but $\varphi(p) = \varphi(q)$. Let $s, t \in Q_n$ be such that $\varphi^n(s) = p$ and $\varphi^n(t) = q$. Let $\mu_1 = \frac{1}{w_{n+1}(s)}\delta_s$ and
 $\mu_2 = \frac{1}{w_{n+1}(t)}\delta_t$. Notice that $\|\mu_i\| \leq 1/2, i=1,2$, because $s,t \in Q_n$. Consider the discrete measure $\mu$ on $K$ defined as
$$ \mu = \sum \limits_{i=0}^n \big{(} 1 - \frac{1}{\sqrt{n}}\big{)}^{n-i} (T^\star)^i \mu_1 - \sum \limits_{i=0}^n \big{(} 1 - \frac{1}{\sqrt{n}}\big{)}^{n-i} (T^\star)^i \mu_2. $$
Notice that $(T^\star)^n \mu_1 = \frac{1}{w(p)}\delta_p$ and $(T^\star)^n \mu_2 = \frac{1}{w(q)}\delta_q$ whence $\|\mu\| \geq 2 \|1/w\|_\infty$. Next notice that $(T^\star)^{n+1} \mu_1 = (T^\star)^{n+1} \mu_2 = \delta_{\varphi(p)}$ whence
$$T^\star \mu - \mu = \big{(} 1 - \frac{1}{\sqrt{n}}\big{)}^n (\mu_1 - \mu_2) -\frac{1}{\sqrt{n}} \sum \limits_{i=1}^n  \big{(} 1 - \frac{1}{\sqrt{n}}\big{)}^{n-i} (T^\star)^i \mu_1$$
$$ +\frac{1}{\sqrt{n}} \sum \limits_{i=1}^n \big{(} 1 - \frac{1}{\sqrt{n}}\big{)}^{n-i} (T^\star)^i \mu_2. $$
Therefore $\|T^\star \mu - \mu\| \leq \big{(} 1 - \frac{1}{\sqrt{n}}\big{)}^n + \frac{1}{\sqrt{n}}\|\mu\|$. Because $n$ can be chosen arbitrary large we see that $1 \in \sigma_{a.p.}(T^\star)$, a contradiction.
\end{proof}

\begin{lemma} \label{l800} Assume conditions of Lemma~\ref{l600}. For any $n \in \mathds{N}$ let $Q_n$ be the set defined in the proof of Lemma~\ref{l700}.
Then there is $N \in \mathds{N}$ such that if $n \geq N$ then $\varphi^n(Q_n) \cap O(\lambda) \subset Q_n \cap O(\lambda)$.
\end{lemma}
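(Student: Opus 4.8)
The plan is to argue by contradiction, reducing the failure of the inclusion to the relation $\lambda \in \sigma_{ap}(T^\star)$, which is impossible under the standing hypothesis $\lambda \in \sigma_r(T^\star)$ of Lemma~\ref{l200}. As before we normalize $|\lambda| = 1$. If the statement fails, there is a strictly increasing sequence $n_j$ and points $y_j \in \varphi^{n_j}(Q_{n_j}) \cap O(\lambda)$ with $y_j \notin Q_{n_j}$; write $y_j = \varphi^{n_j}(x_j)$ with $x_j \in Q_{n_j}$, so that $|w_{n_j+1}(x_j)| > 2$ while $|w_{n_j+1}(y_j)| \le 2$. The first, purely set-theoretic, reduction is that since $\varphi^{-1}(O(\lambda)) = O(\lambda)$ and $y_j \in O(\lambda)$, all the intermediate points $x_j, \varphi(x_j), \dots, \varphi^{n_j}(x_j) = y_j$ (and hence the whole finite orbit $\{\varphi^i(x_j) : 0 \le i \le 2n_j\}$) lie in $O(\lambda)$; by the footnote to Lemma~\ref{l500} these points are moreover pairwise distinct.

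The core of the argument is a uniform growth/decay dichotomy along orbit segments contained in $\overline{O(\lambda)} = O(\lambda) \cup F_\lambda$. First I would show that there is $c > 0$ such that every $\varphi$-invariant Borel probability measure $\mu$ on $\overline{O(\lambda)}$ satisfies $\bigl|\int \ln|w|\,d\mu\bigr| \ge c$. Indeed, by Lemma~\ref{l600} we have $F_\lambda = \bigl(\bigcup_{m \ge 1} \varphi^{-m}(G_\lambda)\bigr) \cup H_\lambda$ with $\rho(T_{G_\lambda}^{-1}) < 1$ and $\rho(T_{H_\lambda}) < 1$; since $\varphi(G_\lambda) = G_\lambda$, Poincar\'e recurrence shows an invariant $\mu$ is carried (mod $\mu$-null sets) by $G_\lambda \cup H_\lambda$ together with its part on $O(\lambda)$, and by the spectral-radius formula (Corollary~\ref{c3}(4), applied to $T_{H_\lambda}$, and its analogue for the invertible operator $T_{G_\lambda}$ of Lemma~\ref{l600}(2),(3)) every invariant measure carried by $H_\lambda$ has $\int \ln|w| < 0$, every one carried by $G_\lambda$ has $\int \ln|w| > 0$, and every one carried by $O(\lambda)$ has $\int \ln|w| > 0$ by property (ii) of Lemma~\ref{l200} (each point of $O(\lambda)$ is $s_0$ of a bi-infinite $\varphi$-string inside $O(\lambda)$, using $\varphi(O(\lambda)) = O(\lambda)$). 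So the weak-$\star$ continuous functional $\mu \mapsto \int \ln|w|\,d\mu$ has constant-free sign but never vanishes on the weak-$\star$ compact set of invariant measures on $\overline{O(\lambda)}$, whence the bound $c > 0$. Combining this with the standard fact that the empirical measure of an orbit segment of length $m$ in $\overline{O(\lambda)}$ has invariance defect $O(1/m)$, and is therefore weak-$\star$ close to the invariant measures for $m$ large, together with continuity of $\ln|w|$ (recall $|w| > 0$), yields a single $M$ such that for all $k \in \overline{O(\lambda)}$ and all $m \ge M$, either $|w_m(k)| \ge e^{cm/2}$ (the \emph{growing} alternative) or $|w_m(k)| \le e^{-cm/2}$ (the \emph{decaying} alternative).

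Applying the dichotomy to $k = x_j$, $m = n_j + 1$, the bound $|w_{n_j+1}(x_j)| > 2 > e^{-c(n_j+1)/2}$ forces, for $n_j$ large, the growing alternative; applying it to $k = y_j$, $m = n_j + 1$, the bound $|w_{n_j+1}(y_j)| \le 2 < e^{c(n_j+1)/2}$ forces the decaying alternative, so $|w_{n_j+1}(y_j)| \le e^{-c(n_j+1)/2}$. Thus along the orbit of $x_j$ the first $n_j$ steps accumulate weight exponentially while the next $n_j$ steps contract it exponentially, so the tail orbit starting at $y_j$ spends a positive fraction of its time near $H_\lambda$ (by the description of $F_\lambda$ in Lemma~\ref{l600} and $\rho(T_{H_\lambda}) < 1$). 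From here I would construct, adapting the measure-theoretic construction in the proof of Lemma~\ref{l700}, discrete measures $\mu_j$ on $K$ supported on the distinct points $\varphi^i(x_j)$, $0 \le i \le 2n_j$, with slowly varying coefficients peaked at $i = n_j$, for which $\|\mu_j\|$ is bounded below by a positive multiple of $|w_{n_j}(x_j)|$ while $\|T^\star \mu_j - \mu_j\| = o(\|\mu_j\|)$ as $j \to \infty$; this gives $\lambda \in \sigma_{ap}(T^\star)$, the desired contradiction. The openness of $\varphi$ (Lemma~\ref{l700}) enters here to keep the Dirac masses supported where needed and to pass between forward and backward iterates.

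The step I expect to be the main obstacle is the last one: choosing the coefficients of $\mu_j$ so that the two boundary terms $a_0 \delta_{x_j}$ and $a_{2n_j}(T^\star)^{2n_j+1}\delta_{x_j}$ and the telescoping difference terms are all $o(\|\mu_j\|)$ — the difficulty being that the partial products $|w_i(x_j)|$ over windows $[0,i]$ and $[i,2n_j]$ of intermediate length can themselves lie in the decaying branch of the dichotomy, so the packet must be weighted so that $|a_i w_i(x_j)|$ genuinely peaks at $i = n_j$. Equivalently, on passing to a weak-$\star$ limit point $y^\star$ of the $y_j$, the delicate case is $y^\star \in F_\lambda$ (in or near $H_\lambda$), where one must use simultaneously property (ii) at the $O(\lambda)$-points $x_j$, the contraction $\rho(T_{H_\lambda}) < 1$, and the fact established in the proof of Lemma~\ref{l700} that for large $n$ the set $Q_n$ is disjoint from a fixed neighbourhood of $H_\lambda$, to rule out this configuration and close the contradiction.
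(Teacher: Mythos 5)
Your overall strategy (negate the inclusion, then manufacture an approximate eigenvector of $T^\star$ out of Dirac masses along the orbit segment $x_j,\varphi(x_j),\dots,\varphi^{2n_j}(x_j)$) is the right one and is what the paper does, but your route to the contradiction has a false intermediate claim and never closes the decisive estimate. The claim that $\mu\mapsto\int\ln|w|\,d\mu$ is bounded away from $0$ on the set of $\varphi$-invariant probability measures on $cl\,O(\lambda)$ cannot be right in general: by Lemma~\ref{l600} the sets $G_\lambda$ and $H_\lambda$ are compact, $\varphi$-invariant and carry invariant measures with $\int\ln|w|>0$ and $\int\ln|w|<0$ respectively, and a convex combination of these is again invariant and can be chosen with $\int\ln|w|=0$. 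So the ``uniform growth/decay dichotomy'' you build the rest of the argument on fails, and the subsequent claim that the tail orbit of $y_j$ ``spends a positive fraction of its time near $H_\lambda$'' is left unproved. You also flag the final estimate (controlling the boundary terms against $\|\mu_j\|$) as an unresolved obstacle, so the proof is not complete as written.

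The point you are missing is that no dynamics beyond the two hypothesized inequalities is needed, provided you normalize correctly. Set $\nu=\frac{1}{w_{n+1}(x)}\delta_x$ (not $\delta_x$ itself) and
$\mu=\sum_{i=0}^{2n-1}\bigl(1-\frac{1}{\sqrt{n}}\bigr)^{|n-i|}(T^\star)^i\nu$. Since $x\in O(\lambda)$ is neither periodic nor eventually periodic, the summands are pairwise disjoint point measures, so norms add: the $i=n$ term alone already gives $\|\mu\|\geq\|(T^\star)^n\nu\|=|w_n(x)|/|w_{n+1}(x)|$, which together with the peak normalization yields a lower bound on $\|\mu\|$ of order $1$; the left boundary term of the telescoped difference $T^\star\mu-\mu$ has norm $\leq\frac{1}{2}\bigl(1-\frac{1}{\sqrt n}\bigr)^{n}$ \emph{because} $|w_{n+1}(x)|>2$, the right boundary term $(T^\star)^{2n}\nu$ has norm $\leq 2\bigl(1-\frac{1}{\sqrt n}\bigr)^{n-1}$ \emph{because} $|w_{n+1}(y)|\leq 2$, and every interior telescoping coefficient is $\frac{1}{\sqrt n}$ times the corresponding coefficient of $\mu$, so the interior contribution is $O(1/\sqrt n)\,\|\mu\|$ by disjointness no matter how the intermediate partial products $|w_i(x)|$ behave. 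Hence $\|T^\star\mu-\mu\|=o(\|\mu\|)$ and $\lambda\in\sigma_{ap}(T^\star)$, the desired contradiction. Your worry about the packet failing to peak at $i=n$ evaporates once you see that only the single term $i=n$ is needed for the lower bound and only the two endpoints need the hypotheses.
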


\begin{proof} Assume contrary to our statement that for any $N \in \mathds{N}$ there are $n > N$ and $k \in O(\lambda)$ such that
 $$|w_n(k)| > 2 \; \mathrm{and} \; |w_n(\varphi^n(k))| \leq 2. \eqno{(16)}$$
 Let $\nu = \frac{1}{w_n(k)}\delta_k$ and
$$ \mu = \sum \limits_{i=0}^{2n-1} \big{(} 1- \frac{1}{\sqrt{n}}\big{)}^{|n-i|}(T^\star)^i \nu.$$
Notice that because $k$ is not a $\varphi$-periodic or eventually $\varphi$-periodic point all the terms in the sum above represent pairwise disjoint point measures on $K$. Therefore $\|\mu\| \geq \|(T^\star)^n \nu\| = \|\delta_{\varphi^n(k)}\| = 1$. On the other hand
$$T^\star \mu - \mu = \big{(} 1- \frac{1}{\sqrt{n}}\big{)}^n \nu - \sum \limits_{i=1}^n \frac{1}{\sqrt{n}}\big{(} 1- \frac{1}{\sqrt{n}}\big{)}^{n-i}(T^\star)^i \nu$$
$$ + \sum \limits_{i=n+1}^{2n-1}\frac{1}{\sqrt{n}}\big{(} 1- \frac{1}{\sqrt{n}}\big{)}^{i-n}(T^\star)^i \nu + \big{(} 1- \frac{1}{\sqrt{n}}\big{)}^{n-1}(T^\star)^{2n}\nu $$
and in virtue of $(16)$ $\|T^\star \mu - \mu\| \leq \big{(} 1- \frac{1}{\sqrt{n}}\big{)}^{n} + 2\big{(} 1- \frac{1}{\sqrt{n}}\big{)}^{n-1}
+ \frac{1}{\sqrt{n}}\|\mu\|$. Because $n$ is arbitrary large we have $1 \in \sigma_{ap}(T^\star)$ in contradiction with our assumption.
\end{proof}

\begin{corollary} \label{c100} There are an open nonempty neighborhood $V$ of $G_\lambda$ in $cl(O_\lambda)$ and $n \in \mathds{N}$ such that
\begin{itemize}
  \item $\varphi(V) \subset V$.
  \item The map $\varphi : V \to \varphi(V)$ is a homeomorphism.
  \item $cl V \cap H_\lambda = \emptyset$.
  \item $|w_n| > 1$ on $cl V$.
\end{itemize}

\end{corollary}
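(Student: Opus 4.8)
The plan is to put together the lemmas just established. We may assume $|\lambda|=1$, and we abbreviate $O=O(\lambda)$, $F=F_\lambda=cl\,O\setminus O$, $G=G_\lambda$, $H=H_\lambda$. Recall $\varphi(O)=\varphi^{-1}(O)=O$, so $cl\,O$ is forward-invariant and, as $\varphi$ is open, also backward-invariant; further $G$ and $H$ are disjoint compact subsets of $F$, with $\varphi(G)=G$, $\varphi|_G$ a homeomorphism, $\varphi(H)=H$, $\rho(T_G^{-1})<1$, $\rho(T_H)<1$, and (Lemma~\ref{l600}(4)) $G\subseteq Int_F\varphi^{-m}(G)$ for some fixed $m$.

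First I would fix the scales. The spectral radius formula for the weighted composition operator $T_G^{-1}$ on $C(G)$ together with $\rho(T_G^{-1})<1$ gives $\min_{k\in G}|w_n(k)|\to\infty$, so for all large $n$ the set $Q_n=\{k\in K:|w_{n+1}(k)|>2\}$ of Lemma~\ref{l700} is an open neighbourhood of $G$; dually, $\rho(T_H)<1$ gives $\max_{k\in H}|w_{n+1}(k)|\to 0$, so for $n$ large $H$ has an open neighbourhood on which $|w_{n+1}|<1/2$, whence $cl\,Q_n\cap H=\emptyset$. Fix $n$ large enough that, in addition, Lemma~\ref{l700} applies to $R_n:=\varphi^n(Q_n)$ (so $\varphi|_{R_n}$ is one-to-one, $R_n$ open, $R_n\supseteq G$) and Lemma~\ref{l800} applies, i.e. $R_n\cap O\subseteq Q_n\cap O$; equivalently $\varphi^n(Q_n\cap O)\subseteq Q_n\cap O$.

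Next I would take $V$ to be a relatively open neighbourhood of $G$ in $cl\,O$, contained in $R_n$, and forward-invariant; a natural candidate is a piece of $R_n\cap cl\,O$, or a finite union $\bigcup_{j<n}\varphi^j(\,\cdot\,)$ of such, designed so that the $\varphi^n$-invariance of $Q_n\cap O$ supplied by Lemma~\ref{l800} forces $\varphi(V)\subseteq V$, while the absorbing property $G\subseteq Int_F\varphi^{-m}(G)$ keeps the $F$-part of $V$ under control. Granting this, the remaining items are formal: $V\subseteq R_n$ and the injectivity/openness of $\varphi$ on $R_n$ make $\varphi|_V$ a homeomorphism onto $\varphi(V)$; since $O$ is dense in $cl\,O$, $R_n$ is open and $R_n\cap O\subseteq Q_n$, one has $cl\,V\subseteq cl(R_n\cap O)\subseteq\{|w_{n+1}|\geq 2\}$, and this one inclusion delivers simultaneously the weight bound $|w_{n+1}|>1$ on $cl\,V$ and, because $|w_{n+1}|<1/2$ near $H$, the disjointness $cl\,V\cap H=\emptyset$.

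The main obstacle is exactly the construction of the forward-invariant $V$, i.e. verifying $\varphi(V)\subseteq V$: one must prevent a forward step from pushing a point of $V$ out of $R_n$, into a part of $O$ where $|w_{n+1}|$ is no longer large, or toward $H$. The escape in the $O$-direction is precisely what Lemma~\ref{l800} rules out, and the escape in the $F$-direction is controlled by Lemma~\ref{l600}(4); the delicate point is the index bookkeeping needed to upgrade these $\varphi^n$-level statements to the one-step inclusion $\varphi(V)\subseteq V$, which is why a finite union over $j<n$ (or an equivalent device) is introduced. Everything else then follows formally from $cl\,V\subseteq\{|w_{n+1}|\geq 2\}$ and Lemma~\ref{l700}.
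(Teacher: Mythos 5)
You have assembled the correct ingredients (the sets $Q_n$ and $R_n=\varphi^n(Q_n)$, the injectivity of $\varphi$ on $R_n$ from Lemma~\ref{l700}, the inclusion $\varphi^n(Q_n)\cap O(\lambda)\subseteq Q_n\cap O(\lambda)$ from Lemma~\ref{l800}, and the spectral radius estimates on $G_\lambda$ and $H_\lambda$), and you have correctly isolated the one nontrivial step, the production of a forward-invariant $V$. But you leave exactly that step unresolved, and the device you tentatively offer for it --- a finite union $\bigcup_{j<n}\varphi^j(\,\cdot\,)$ --- is the wrong one. A union of forward images is indeed forward-invariant once one knows $\varphi^n(W)\subseteq W$, but it is too large: the pieces $\varphi^j(W)$ with $j\geq 1$ need not lie in $R_n$ (so the injectivity of $\varphi$ on $V$, which you derive from $V\subseteq R_n$, is lost) nor in $Q_n$ (so the bound $|w_{n+1}|>2$ on $cl\,V$, and with it $cl\,V\cap H_\lambda=\emptyset$, is lost). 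Since your ``remaining items are formal'' argument rests entirely on the containment $cl\,V\subseteq cl(R_n\cap O(\lambda))$, the union candidate breaks the rest of your proof as well.

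The paper resolves this with the finite \emph{intersection} $V=\bigcap_{k=0}^{n-1}\varphi^k\bigl(R_n\cap O(\lambda)\bigr)$ for $n$ large. This set is contained in $R_n\cap O(\lambda)$, which by Lemma~\ref{l800} lies in $Q_n$, so your formal deductions of the second, third and fourth bullets go through verbatim. Forward invariance is then the standard $n$-step-to-one-step upgrade: $\varphi(V)\subseteq\bigcap_{k=1}^{n}\varphi^k(R_n\cap O(\lambda))$, and the only new term satisfies $\varphi^n(R_n\cap O(\lambda))=\varphi^n(\varphi^n(Q_n)\cap O(\lambda))\subseteq\varphi^n(Q_n\cap O(\lambda))\subseteq\varphi^n(Q_n)\cap O(\lambda)=R_n\cap O(\lambda)$, using Lemma~\ref{l800} and $\varphi(O(\lambda))=O(\lambda)$; hence $\varphi(V)\subseteq V$. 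In short, the missing idea is to intersect the forward images rather than to unite them; with that single change your outline becomes the paper's proof.
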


\begin{proof} Fix a large enough $n \in \mathds{N}$ and consider the set $R_n$ introduced in the proof of Lemma~\ref{l700}. The set
$V = \bigcap \limits_{k=0}^{n-1} \varphi^k(R_n \cap O(\lambda))$ has the required properties.

\end{proof}

\begin{lemma} \label{l900} Assume conditions of Lemma~\ref{l600}. Let $V$ be an open neighborhood in $K$ of the set $G_\lambda$ such that $\varphi(V) \subset V$ and $V \cap H_\lambda = \emptyset$. Then $H = cl O_\lambda \setminus \bigcup \limits_{k=1}^\infty \varphi^{-k}(V) = H_\lambda$.
\end{lemma}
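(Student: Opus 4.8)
The plan is to reduce the equality $H=H_\lambda$ to the single assertion that every point of $O(\lambda)$ is eventually carried into $V$ by iteration of $\varphi$, i.e. $O(\lambda)\subseteq\bigcup_{k\ge1}\varphi^{-k}(V)$, and then to prove that assertion by a dynamical argument using Lemma~\ref{l200}, Lemma~\ref{l600} and the results of~\cite{Ki}.

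First I would dispose of the routine set-theoretic part (throughout one may assume $|\lambda|=1$). Since $\varphi(V)\subset V$, every forward iterate of a point of $V$ stays in $V$; hence if $x\in H$ then $x\notin V$ (otherwise $\varphi(x)\in V$, contradicting the defining property of $H$ for $k=1$). So $H\cap V=\emptyset$ and $H=cl\,O(\lambda)\setminus\bigcup_{k\ge0}\varphi^{-k}(V)$. Using $\varphi(V)\subset V$ again one checks that $W:=\bigcup_{k\ge0}\varphi^{-k}(V)$ satisfies $\varphi(W)\subseteq W$ and $\varphi^{-1}(W)\subseteq W$; combined with $\varphi(cl\,O(\lambda))=cl\,O(\lambda)$ this gives that $H$ is closed with $\varphi(H)=H$. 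Next, because $G_\lambda\subset V$ we have $\bigcup_{n\ge1}\varphi^{-n}(G_\lambda)\subset\bigcup_{k\ge1}\varphi^{-k}(V)$, while $\varphi^{k}(H_\lambda)=H_\lambda$ (Lemma~\ref{l600}(5)) and $H_\lambda\cap V=\emptyset$ force $H_\lambda\cap\bigcup_{k\ge1}\varphi^{-k}(V)=\emptyset$; with the definition $H_\lambda=F_\lambda\setminus\bigcup_{n\ge1}\varphi^{-n}(G_\lambda)$ this yields $H\cap F_\lambda=H_\lambda$. Since $cl\,O(\lambda)=O(\lambda)\cup F_\lambda$, the Lemma is now equivalent to $H\cap O(\lambda)=\emptyset$.

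To prove $H\cap O(\lambda)=\emptyset$, suppose $x\in H\cap O(\lambda)$. Using $\varphi(H)=H$ and $\varphi^{-1}(O(\lambda))=O(\lambda)$ I would build a $\varphi$-string $S=\{s_i:i\in\mathds Z\}$ through $x$ with $S\subset H\cap O(\lambda)$; since $H\cap V=\emptyset$ and $G_\lambda\subset V$ this string misses $G_\lambda$ entirely, and by property (ii) of Lemma~\ref{l200} (valid for $O(\lambda)$ by Definition~\ref{d500}) we have $\liminf_n|w_n(s_0)|^{1/n}>1$ and $\limsup_n|w_n(s_{-n})|^{1/n}<1$. Two facts are then combined. (a) Since $\rho(T_{H_\lambda})<1$, there are $m\in\mathds N$, $\theta\in(0,1)$ and an open $N\supseteq H_\lambda$ with $|w_m|<\theta^m$ on $N$; if the $\omega$-limit set $\omega(x)$ were contained in $H_\lambda$, then $\varphi^k(x)\in N$ for all large $k$, whence $|w_n(x)|^{1/n}\le\theta<1$ along $n\in m\mathds N$, contradicting $\liminf_n|w_n(x)|^{1/n}>1$. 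So $\omega(x)$ meets $O(\lambda)$. (b) Passing to weak-$\star$ limits of the Birkhoff (empirical) measures of $\varphi$ along the forward orbit of $s_0$ and along the blocks $s_{-n},\dots,s_{-1}$ produces $\varphi$-invariant Borel probability measures supported in $H$ whose $\int\log|w|$ are respectively $>0$ and $<0$; decomposing into ergodic components and using that (by property (ii) together with Birkhoff's theorem) every ergodic $\varphi$-invariant measure giving full mass to $O(\lambda)$ has $\int\log|w|>0$, while invariant measures supported in $H_\lambda$ have $\int\log|w|<0$ and those supported in $G_\lambda$ are excluded by $S\cap V=\emptyset$, one is steered to an invariant measure $\rho$ on a $\varphi$-invariant subset of $cl\,O(\lambda)$ with $\int\log|w|\,d\rho=0=\log|\lambda|$. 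Together with the fact — established in the proof of Lemma~\ref{l500} — that $O(\lambda)$ carries no eventually $\varphi$-periodic points, such a $\rho$ yields, via the arguments of~\cite[Theorem 3.10, Theorem 3.12 and the proof of Lemma 3.5]{Ki}, that $\lambda\Gamma\subseteq\sigma_{ap}(T^\star)$, contradicting $\lambda\in\sigma_r(T^\star)$.

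The first paragraph is purely formal. The main obstacle is step (b): converting one-sided $w$-product growth along a string in $O(\lambda)$ into a genuine obstruction to $\lambda\in\sigma_r(T^\star)$. The subtlety is that, because $\varphi$ need not be injective, a $\varphi$-invariant measure with $\int\log|w|=\log|\lambda|$ does not by itself force $\lambda$ into the spectrum; one has to control in which of the three pieces $O(\lambda)$, $G_\lambda$, $H_\lambda$ the ergodic components of the measure live, and exploit the absence of periodic points in $O(\lambda)$ so that the string through $x$ actually realizes an approximate eigen-measure of $T^\star$ — the same mechanism used in Lemmas~\ref{l300} and~\ref{l500}. A likely shorter alternative, bypassing measures, is to argue straight from Lemma~\ref{l800} and Corollary~\ref{c100} that the forward orbit of any point of $O(\lambda)$ must enter the absorbing neighbourhood of $G_\lambda$ constructed there, and hence enters $V$; this would need careful bookkeeping of the sets $Q_n=\{|w_{n+1}|>2\}$ and their $\varphi$-images but avoids ergodic theory altogether.
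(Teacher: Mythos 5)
Your first paragraph (the reduction to $H\cap O(\lambda)=\emptyset$) is correct and is exactly the paper's starting point, but the core of your step (b) does not work. The mechanism you invoke --- producing a $\varphi$-invariant measure $\rho$ with $\int\log|w|\,d\rho=\log|\lambda|$ and concluding $\lambda\Gamma\subseteq\sigma_{ap}(T^\star)$ --- is false when $\rho$ is only a mixture of ergodic components with integrals of opposite signs, and the paper itself contains the counterexample: by Lemma~\ref{l600} the set $F_\lambda$ carries invariant measures on $G_\lambda$ with $\int\log|w|>\log|\lambda|$ and on $H_\lambda$ with $\int\log|w|<\log|\lambda|$, hence convex combinations with integral exactly $\log|\lambda|$, and yet Lemma~\ref{l500} asserts $\sigma(T^\lambda,C(F_\lambda))\cap\lambda\Gamma=\emptyset$. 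Your own analysis shows every \emph{ergodic} measure on $H$ has $\int\log|w|\neq\log|\lambda|$, so the measure you are ``steered to'' is necessarily such a mixture, and no spectral conclusion follows from it. Moreover, even if you could place $\lambda$ in $\sigma(T_H)$, that alone is not a contradiction: you would still have to exclude $\lambda\in\sigma_r(T_H^\star)$, which is a genuine separate case and is nowhere addressed in your argument.

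The paper argues differently: assuming $H\setminus H_\lambda\neq\emptyset$ (hence $\subset O(\lambda)$), it runs a trichotomy on the operator $T_H$ on $C(H)$. If $\rho(T_H)<|\lambda|$, the forward-growth half of inequality $(12)$ fails for strings in $H\cap O(\lambda)$. If $\rho(T_H)>|\lambda|$ and $\lambda\notin\sigma(T_H)$ (and similarly if $\lambda\in\sigma_r(T_H^\star)$, via Lemma~\ref{l600} applied to $T_H$), the structure theorem of~\cite{Ki} yields a closed invariant $L\subseteq H$ with $T_L$ invertible and $\rho(T_L^{-1})<1/|\lambda|$; since $L$ cannot meet $H_\lambda$ it lies in $O(\lambda)$, and backward products along strings in $L$ then grow, contradicting the backward-decay half of $(12)$. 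The remaining case $\lambda\in\sigma_{ap}(T_H^\star)$ contradicts $\lambda\in\sigma_r(T^\star)$ directly. If you want to salvage an ergodic-theoretic route, the correct move is not a zero-integral mixture but to take an ergodic component $\nu$ of your forward limit measure $\mu_+$ with $\nu(O(\lambda))=1$ and $\int\log|w|\,d\nu>0$, pass to the natural extension, and apply Birkhoff in \emph{both} time directions: almost every string of $\nu$ lies in $O(\lambda)$ and has backward growth rate $\exp\int\log|w|\,d\nu>|\lambda|$, contradicting the second inequality in $(12)$. That repairs step (b), but it is not what you wrote. Your proposed ``shorter alternative'' via Lemma~\ref{l800} and Corollary~\ref{c100} also cannot work as stated, since the lemma must hold for an arbitrary admissible $V$, not only the specific one built in Corollary~\ref{c100}.
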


\begin{proof} Clearly $\varphi(H) \subseteq H$ and $H_\lambda \subseteq H$. Assume that $H \setminus H_\lambda \neq \emptyset$. It follows from $(5)$ in the statement of Lemma~\ref{l600} that $H \setminus H_\lambda \subset O(\lambda)$. Let $T_H$ be the weighted composition operator $wT_\varphi$ considered on $C(H)$. We have to consider three possibilities.

$(a)$ $\rho(T_H) < |\lambda|$. That contradicts the first inequality in $(1)$ in the statement of Lemma~\ref{l200}.

$(b)$ $\rho(T_H) > |\lambda|$ and $\lambda \not \in \sigma(T_H)$. In this case it follows from~\cite{Ki} that there is a closed subset $L$ of $H$ such that
$\varphi(L) = L$, the operator $T_L = wT_\varphi$ is invertible on $C(L)$, and $\rho(T_L^{-1} < 1/|\lambda|$. Clearly $L \cup H_\lambda = \emptyset$ whence
$L \subset O(\lambda)$. But then we come to a contradiction with the second inequality in $(1)$.

$(c)$ $\lambda \in \sigma_r(T^\star_H)$. We bring this case to a contradiction similarly to $(b)$ by using statements $(1) - (3)$ of Lemma~\ref{l600}.
\end{proof}

The previous statements provide necessary conditions for $\lambda \in \sigma_r(T^\star)$. As Theorem~\ref{t100} below shows the combination of these conditions is also sufficient. Before we state and prove this theorem we need one simple result which is most probably known.

\begin{lemma} \label{l1000} Let $K$ be a compact Hausdorff space, $\varphi$ be a map of $K$ into itself, and $T_\varphi$ be the corresponding composition operator on $C(K)$. The following statements are equivalent
\begin{enumerate}
  \item The Banach dual operator $T_\varphi^\star$ preserves disjointness.
  \item The map $\varphi$ is one-to-one.
\end{enumerate}

\end{lemma}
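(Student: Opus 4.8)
The plan is to establish both implications $(2)\Rightarrow(1)$ and $(1)\Rightarrow(2)$ by working with the action of $T_\varphi^\star$ on point masses and then extending to general measures by density and weak-$\star$ approximation. First I would record the elementary identity: for $k\in K$, the Dirac measure $\delta_k\in C(K)^\star$ satisfies $\langle T_\varphi^\star \delta_k, f\rangle = \langle \delta_k, T_\varphi f\rangle = f(\varphi(k))$, so that $T_\varphi^\star \delta_k = \delta_{\varphi(k)}$. More generally, for a discrete measure $\mu=\sum_i c_i \delta_{k_i}$ we get $T_\varphi^\star \mu = \sum_i c_i \delta_{\varphi(k_i)}$, and by weak-$\star$ density of discrete measures together with continuity of $T_\varphi^\star$ in the weak-$\star$ topology, $T_\varphi^\star$ is the pushforward operator $\mu\mapsto \varphi_\sharp\mu$ on all of $C(K)^\star=M(K)$.

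For $(2)\Rightarrow(1)$, assume $\varphi$ is one-to-one, hence (being a continuous injection of a compact space into a Hausdorff space) a homeomorphism onto its image $\varphi(K)$. Take $\mu,\nu\in M(K)$ with $|\mu|\wedge|\nu|=0$; I want $|\varphi_\sharp\mu|\wedge|\varphi_\sharp\nu|=0$. Since $\varphi$ is injective, $\varphi_\sharp|\mu| = |\varphi_\sharp\mu|$ (the pushforward of the total-variation measure is the total variation of the pushforward, because no cancellation can occur when the map is injective), and likewise for $\nu$. Disjointness of $|\mu|$ and $|\nu|$ means they are carried by disjoint Borel sets $A$ and $B$; then $\varphi_\sharp|\mu|$ is carried by $\varphi(A)$ and $\varphi_\sharp|\nu|$ by $\varphi(B)$, and $\varphi(A)\cap\varphi(B)=\varphi(A\cap B)=\emptyset$ by injectivity. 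Hence the images are mutually singular, i.e. disjoint in the lattice $M(K)$, so $T_\varphi^\star$ preserves disjointness.

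For the contrapositive of $(1)\Rightarrow(2)$, suppose $\varphi$ is not one-to-one, so there exist $k_1\neq k_2$ in $K$ with $\varphi(k_1)=\varphi(k_2)=:k_0$. Then $\delta_{k_1}$ and $\delta_{k_2}$ are disjoint elements of $M(K)$ (distinct points of a Hausdorff space), yet $T_\varphi^\star\delta_{k_1}=\delta_{k_0}=T_\varphi^\star\delta_{k_2}$, so their images coincide and in particular are not disjoint; thus $T_\varphi^\star$ does not preserve disjointness. This completes the equivalence. I expect the only genuinely delicate point to be the identity $\varphi_\sharp|\mu|=|\varphi_\sharp\mu|$ under injectivity of $\varphi$ — one must be slightly careful because $\varphi(K)$ need not be clopen and Borel images under continuous injections of compacta are well-behaved (they are Borel, indeed closed here since $K$ is compact), so measurability causes no trouble; everything else is routine.
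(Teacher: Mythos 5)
Your proof is correct in substance but follows a genuinely different route from the paper. The paper factors $T_\varphi$ through the restriction map onto $C(\varphi(K))$: since $\varphi$ is a continuous injection of a compact space, it is a homeomorphism onto $\varphi(K)$, so $g\mapsto g\circ\varphi$ is a positive surjective isometry of $C(\varphi(K))$ onto $C(K)$; its dual is then a positive surjective isometry of $C(K)^\star$ onto $C(\varphi(K))^\star$, and the conclusion is obtained by quoting Abramovich's theorem (or Lamperti's theorem for $L^1$-spaces) that such isometries preserve disjointness. You instead identify $T_\varphi^\star$ concretely as the pushforward $\mu\mapsto\varphi_\sharp\mu$ and argue measure-theoretically; this is more elementary and self-contained, and it makes the converse direction (Dirac masses at two preimages of a common point) completely transparent, whereas the paper dismisses that direction as trivial. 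One simplification: you do not actually need the delicate identity $\varphi_\sharp|\mu|=|\varphi_\sharp\mu|$; the easy inequality $|\varphi_\sharp\mu|\le\varphi_\sharp|\mu|$ (valid for any $\varphi$) already reduces the problem to showing $\varphi_\sharp|\mu|\perp\varphi_\sharp|\nu|$, for which injectivity is used only to separate the images of the carriers. One caution: your parenthetical claim that Borel images under continuous injections are Borel is not valid for arbitrary Borel subsets of a general (non-metrizable) compact Hausdorff space --- only images of compact sets are automatically closed. The gap is harmless because regular Borel measures are inner regular, so the disjoint carriers of $|\mu|$ and $|\nu|$ may be taken $\sigma$-compact, and then their $\varphi$-images are $\sigma$-compact, disjoint, and Borel; but that regularity step should be made explicit rather than waved at.
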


\begin{proof} The implication $(1) \Rightarrow (2)$ is trivial. Assume that $\varphi$ is a homeomorphism of $K$ onto $\varphi(K)$. The operator $T_\varphi$ induces a positive isometry of $C(\varphi(K))$ onto $C(K)$. Therefore the dual operator $T_\varphi^\star$ can be considered as a positive isometry of $C(K)^\star$ onto
 $C(\varphi(K))^\star$. To finish the proof we can use a theorem of Abramovich~\cite{Ab} that states that a positive surjective isometry between normed lattices preserves disjointness.  Alternatively we can notice that $C(K)^\star$ and $C(\varphi(K))^\star$ are $L^1$-spaces and we can apply the theorem of Lamperti (see e.g.~\cite[Chapter 3]{FJ}) to see that $T^\star_\varphi$ preserves disjointness (because in the statement of Lamperti theorem the measure is assumed to be sigma-finite some simple additional reasoning is needed).
\end{proof}

\begin{theorem} \label{t100} Let $K$ be a compact Hausdorff space and $\varphi$ be an open continuous map of $K$ onto itself. Let $w$ be an invertible element of $C(K)$. Let $T$ be the weighted composition operator
$$ (Tf)(k) = w(k)f(\varphi(k), \; f \in C(K), \; k \in K. $$
Let $\lambda \in \sigma(T)$.
The following conditions are equivalent.
$(I)$ $\lambda \in \sigma_r(T^\star)$ (i.e. $\lambda \in \sigma(T)$ and $(\lambda I - T)C(K) = C(K)$).
$(II)$
\begin{enumerate}
  \item $\lambda \neq 0$
  \item There is a nonempty open subset $O$ of $K$ such that $\varphi(O) = O = \varphi^{-1}(O)$, for every point $k \in O$ conditions $(1)$ are satisfied,
   $F = cl O \setminus O \neq \emptyset$, and $\lambda \not \in \sigma(T,C(K \setminus O)$.
  \item $\lambda \Gamma \cap \sigma(T,C(F)) = \emptyset$.
  \item There are subsets $G$ and $H$ of $F$ with properties $(1) - (5)$ from the statement of Lemma~\ref{l600}.
  \item There are an open neighborhood $V$ of $G$ in $cl O$ and $m \in \mathds{N}$ such that $V \cap H = \emptyset$, $\varphi(V) \subset V$, $cl O \setminus \bigcup \limits_{n=1}^\infty \varphi^{-n}(V) = H$, the map $\varphi : V \to \varphi(V)$ is a homeomorphism, and $|w_m| > 1$ on $cl V$.
\end{enumerate}

\end{theorem}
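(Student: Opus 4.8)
The implication $(I)\Rightarrow(II)$ is essentially the accumulated content of the lemmas already proved. Assuming $\lambda\in\sigma_r(T^\star)$, Lemma~\ref{l100} gives $(II)(1)$, and one takes $O=O(\lambda)$ from Definition~\ref{d500}: by Lemma~\ref{l200} this set has properties $(i)$ and $(ii)$ (so every $\varphi$-string in $O$ satisfies $(12)$), by Lemma~\ref{l300} $K\setminus O\neq\emptyset$, and by Lemma~\ref{l400} $\lambda\notin\sigma(T_{K\setminus O})$; that $F=cl\,O\setminus O$ is nonempty follows by applying Lemma~\ref{l300} to the subsystem $(O,\varphi|_O)$ once one observes that $F=\emptyset$ would make $O$ clopen and that $O$ coincides with its own $O(\lambda)$. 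This gives $(II)(2)$. Then $(II)(3)$ is Lemma~\ref{l500}, $(II)(4)$ is Lemma~\ref{l600}, and $(II)(5)$ is Corollary~\ref{c100} together with Lemma~\ref{l900}. So the only genuinely new work is $(II)\Rightarrow(I)$, and I will describe how I would carry it out.

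Normalise $\lambda=1$. The plan is to prove $(I-T)C(K)=C(K)$ by showing that the Banach adjoint $I-T^\star$ is bounded below on $M(K)=C(K)^\star$. The set $A=K\setminus O$ is closed and, since $O$ is open and $\varphi^{-1}(O)=\varphi(O)=O$, one has $\varphi^{-1}(A)=\varphi(A)=A$; consequently $M(K)$ splits as the $\ell^1$-type direct sum $M(A)\oplus M(O)$, where $M(O)$ is the space of measures concentrated on $O$. Because $T^\star$ acts by the weighted push-forward $\mu\mapsto\varphi_\star(w\mu)$ and $\varphi^{-1}(A)=A$, this decomposition is $T^\star$-invariant. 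On $M(A)$ the operator $I-T^\star$ coincides with $(I-T_A)^\star$, which is invertible by $(II)(2)$. Thus everything reduces to bounding $I-T^\star$ from below on $M(O)$. Here I would use the identification $M(O)=C_0(O)^\star$, under which $T^\star|_{M(O)}$ is the adjoint of $T$ acting on $C_0(O)$ — the restriction being legitimate because $\varphi|_O$ is a proper continuous surjection of the locally compact space $O$ onto itself — so the assertion becomes: $I-T$ maps $C_0(O)$ onto $C_0(O)$.

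To solve $(I-T)f=g$ in $C_0(O)$ the plan is to mimic the construction used in the proofs of Theorems~\ref{t1} and~\ref{t2} in~\cite{Ki} for the invertible case, with the collar $V$ replacing the global homeomorphism. The decomposition $cl\,O=H\sqcup\bigcup_{n\ge1}\varphi^{-n}(V)$ from $(II)(4)$--$(5)$, together with $\varphi(V)\subseteq V$ and $H\subseteq F$, shows that $O\subseteq\bigcup_{n\ge1}\varphi^{-n}(V)$, so every forward orbit in $O$ enters the collar $V$ after a finite (and, on compact subsets of $O$, uniformly bounded) number of steps and then stays there. On $cl\,V$ the map $\varphi$ is one-to-one by $(II)(5)$ and $|w_m|>1$, so near $V$ the equation behaves exactly as in the invertible ``expanding'' case and can be solved by the corresponding backward summation built from the continuous single-valued branch of $\varphi^{-m}$ on $V$ (whose inverse-weight products are bounded in sup-norm by a constant $<1$, which is precisely what $|w_m|>1$ buys); near the attracting part $H$ of $F$ one has $\rho(T_H)<1$ by $(II)(3)$--$(4)$ and solves by the forward Neumann series; and on the closed invariant set $K\setminus O$ the equation is already solved, since $\lambda\notin\sigma(T_{K\setminus O})$. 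One then glues these partial solutions; continuity of the glued function relies on $\varphi$ being open (preimages of open sets stay open) and on the bounded hitting time into $V$, and the quantitative estimates are of the same type as in the proofs of Lemmas~\ref{l700} and~\ref{l800}.

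The main obstacle is exactly this gluing step. Because $\varphi$ is only a local homeomorphism — injective merely on $V$ — the ``backward'' construction is genuinely multivalued away from $V$, and one must rule out that an approximate eigenmeasure of $T^\star$ could concentrate either on the non-injectivity locus or on the boundary $F$. This is where $(II)(3)$ and $(II)(5)$ do the decisive work: hypothesis $(II)(3)$ keeps $|w_p|$ bounded away from $|\lambda|$ along periodic orbits in $F$, while $(II)(5)$ forces injectivity on $V$ and, via $O\subseteq\bigcup_n\varphi^{-n}(V)$, funnels every orbit into $V$ in boundedly many steps; pushing a hypothetical near-eigenmeasure forward into $V$ and then running in reverse the argument already used in Lemmas~\ref{l700}--\ref{l900} would produce $\lambda\in\sigma_{ap}(T^\star)$, a contradiction. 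Combining the three pieces — the exact reduction on $M(A)$, the forward solution near $H$, and the backward solution near $G$, glued through $V$ — yields $I-T^\star$ bounded below on $M(K)$, i.e.\ $(\lambda I-T)C(K)=C(K)$, which completes $(II)\Rightarrow(I)$.
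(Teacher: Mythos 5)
Your treatment of $(I)\Rightarrow(II)$ is fine and is essentially what the paper does: the claim is the accumulation of Lemmas~\ref{l100}--\ref{l900} and Corollary~\ref{c100} applied to $O=O(\lambda)$. The problem is $(II)\Rightarrow(I)$. Your primary route --- directly solving $(I-T)f=g$ on $C_0(O)$ by gluing a forward Neumann series near $H$ to a backward summation through the collar $V$ --- stalls at exactly the gap you yourself name: away from $V$ the map $\varphi$ is not injective, so the backward branch is multivalued, and nothing in the sketch produces a single-valued continuous solution on all of $O$, let alone a uniform bound for it. Your proposed repair (push a ``hypothetical near-eigenmeasure'' into $V$ and ``run Lemmas~\ref{l700}--\ref{l800} in reverse'' to conclude $\lambda\in\sigma_{ap}(T^\star)$) is not a proof: deriving $\lambda\in\sigma_{ap}(T^\star)$ from the assumed existence of a near-eigenmeasure is a tautology, not a contradiction, and you never identify the actual impossibility.

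The paper's proof of $(II)\Rightarrow(I)$ is a single contradiction argument whose three essential mechanisms are absent from your proposal. Assume $\lambda=1\in\sigma_{ap}(T^\star)$ and take $\mu_n$ with $\|\mu_n\|=1$ and $T^\star\mu_n-\mu_n\to 0$. First, $II(2)$ forces $|\mu_n|(K\setminus O)\to 0$. Second --- the step your sketch lacks --- the ideal $J$ of functions vanishing on $V$ is $T$-invariant with $\rho(T|J)<1$ (this is where $II(5)$ and $\rho(T,C(H))<1$ enter), which forces $|\mu_n|(cl\,O\setminus V)\to 0$, so one may assume $|\mu_n|(V)=1$. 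Third, since $\varphi$ is injective on $V$, Lemma~\ref{l1000} shows $T_V^\star$ preserves disjointness, hence $|T_V|^\star|\mu_n|-|\mu_n|\to 0$, and a weak-$\star$ accumulation point $\mu$ is a probability measure with $|T_V|^\star\mu=\mu$ and $\varphi$-invariant support $S\subset cl\,V$. The concrete contradiction is then that $(T_\varphi^m)^\star$ is an isometry on $C(S)^\star$ while $|w_m|>1$ on $cl\,V$, so $\|\mu\|=\|(T_\varphi^m)^\star(|w_m|)^\star\mu\|=\|(|w_m|)^\star\mu\|>\|\mu\|$. Without the localization of mass onto $V$, the passage to moduli via disjointness preservation, and this final isometry-versus-expansion clash, your argument does not close.
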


\begin{proof} The implication $(I) \Rightarrow (II)$ has been already proved.

Assume $(II)$. It follows from $II(1)$ and Theorem 3.10 in~\cite{Ki} that $\lambda \in \sigma(T)$. Assume contrary to our statement that
$\lambda \in \sigma_{ap}(T^\star)$. We can assume without loss of generality that $\lambda =1$. Then there is a sequence $\mu_n \in C(K)^\star, n \in \mathds{N}$ such that $\|\mu_n\|=1$ and $T^\star \mu_n - \mu_n \mathop \rightarrow \limits_{n \to \infty} 0$.

It follows from $II(2)$ that $|\mu_n | (K \setminus O) \mathop \rightarrow \limits_{n \to \infty} 0$ and therefore we can assume that
$|\mu_n| (K \setminus O) =0, n \in \mathds{N}$.

Consider the set $V$ and the integer $m$ from $II(5)$. The ideal $J$ of all functions from $C(K)$ that are equal $0$ on $V$ is $T$-invariant and it is easy to see from $II(5)$ and the fact that $\rho(T,C(H)) <1$ that $\rho(T|J) < 1$. Therefore $|\mu_n|(cl O \setminus V) \mathop \rightarrow \limits_{n \to \infty} 0$ and we can assume that $|\mu_n|(V) =1, n \in \mathds{N}$. Let $T_V$ be the operator $wT_\varphi$ considered on $C(cl V)$. Then $T_V^\star \mu_n - \mu_n \mathop \rightarrow \limits_{n \to \infty} 0$. By Lemma~\ref{l1000} the operator $T_V^\star$ preserves disjointness and therefore $|T_V|^\star |\mu_n| - |\mu_n | \mathop \rightarrow \limits_{n\to \infty} 0$. Let $\mu$ be a probability measure on $V$ which is an accumulation point of the sequence $|\mu_n|, n \in \mathds{N}$ in the weak-$\star$ topology. Then $|T_V^\star| \mu = \mu$. Let $S = supp \, \mu$. Then $\varphi(S) = S$ and the operator $T_S$ induced by $T$ on $C(S)$ is invertible. We have $(T_\varphi^m)^\star (|w_m|)^\star \mu = \mu$. But $(T_\varphi^m)^\star$ is an isometry on $C(S)^\star$ and $\|(|w_m|)^\star \mu \| > \|\mu\|$, a contradiction.
\end{proof}

\section{Appendix}
The purpose of this appendix is to clarify some details about the statement and the proof of Theorem 22 in~\cite{Ki1} which was extensively used in the current paper.

(1) The aforementioned theorem states that if $T$ is an operator on a Banach lattice $X$ of the form (6)then
$$ \sigma(T,X) = \sigma(\hat{T}, \hat{X})= \sigma(S,C(K)) = \sigma(\hat{S}, C(\hat{K})).$$
While the equalities
$$ \sigma(T,X) = \sigma(\hat{T}, \hat{X})=  \sigma(\hat{S}, C(\hat{K}))$$
and their proof in~\cite{Ki1} are correct, the statement that any of these sets is equal to $ \sigma(S,C(K))$ is in general false, as in particular follows from Example 32 in~\cite{Ki1}.

Fortunately, we did not use this equality in full generality, and in the case when $X$ is a Banach lattice with a quasi-interior point it is true as follows from Proposition~\ref{p2} and Lemma~\ref{la1}.

(2) In the proofs of Theorems 22 and 15 in~\cite{Ki1} the following fact was used. Let $X$ be a Banach lattice and $B$ be a band in $X$, then
$$\widehat{X/B} = \hat{X}/\hat{B.}$$
It was (and still is) my assumption that this fact must be known but I was not able to find it in the literature. Therefore a short proof is provided below.

\begin{proof} Consider the canonical map $T : X \to X/B$; $Tx =[x]$. Then~\cite[Problem 9.3.2, page 29]{AA} the map $T$ is order continuous. Therefore by Veksler's theorem~\cite{Ve} the operator $T$ has the unique order continuous extension $\hat{T} : \hat{X} \to \widehat{X/B}$. Notice that because $\hat{T}$ is order continuous and $T(X) = X/B$ we have $\hat{T}(\hat{X})= \widehat{X/B}$. On the other hand
$\ker{\hat{T}} = \hat{B}$. Indeed, obviously $B \subset \ker{\hat{T}}$ and because $\hat{T}$ is order continuous we have $\hat{B} \subset \ker{\hat{T}}$. On the other hand if $\hat{x} \in \hat{X} \setminus \hat{B}$ then there is $x \in B^d$ such that $x \neq 0$ and $|x| \leq |\hat{x}|$. The operators $T$ and (therefore) $\hat{T}$ preserve disjointness whence $|\hat{T}\hat{x}| =\hat{T}|\hat{x}| \geq T|x| =|Tx| \neq 0$ and $\hat{x} \not \in \ker{\hat{T}}$.

Thus we see that $\widehat{X/B}$ is isometrically  and lattice isomorphic to $\hat{B}^d = \hat{X}/\hat{B}$.

\end{proof}

\end{document}